\numberwithin{equation}{section}
\definecolor{commentcolour}{rgb}{0.7, 0, 0}
\newcommand*{\smfrac}[2]{{\textstyle \frac{#1}{#2}}}
\newcommand*{\norm}[1]{\left\|#1\right\|}
\def\Aint#1{\mathchoice
  {\AXint\displaystyle\textstyle{#1}}%
  {\AXint\textstyle\scriptstyle{#1}}%
  {\AXint\scriptstyle\scriptscriptstyle{#1}}%
  {\AXint\scriptscriptstyle\scriptscriptstyle{#1}}%
  \!\int}
\def\AXint#1#2#3{{\setbox0=\hbox{$#1{#2#3}{\int}$}
\vcenter{\hbox{$#2#3$}}\kern-.5\wd0}}
\def\avint{\Aint-}
\def\<{\langle}
\def\>{\rangle}
\def\R{\mathbb{R}}
\def\N{\mathbb{N}}
\def\Z{\mathbb{Z}}
\def\CC{{\rm C}}
\def\HH{{\rm H}}
\def\WW{{\rm W}}
\def\LL{{\rm L}}
\def\PP{{\rm P}}
\def\dd{{\rm d}}
\def\dx{\,\dd x}
\def\dt{\,\dd t}
\def\ds{\,\dd s}
\def\conv{\rightarrow}
\def\wconv{\rightharpoonup}
\DeclareMathOperator*{\argmin}{\mathrm{argmin}}
\def\eps{\varepsilon}
\def\subset{\subseteq}
\newcommand{\closure}[2][3]{{}\mkern#1mu\overline{\mkern-#1mu#2}} 
\def\G{\Gamma}
\DeclareMathOperator*{\Glim}{\G--\lim}
\def\Om{\Omega}
\def\A{\mathcal{A}(L)}
\def\AN{\mathcal{A}^{\eps}(L)}
\newcommand{\Da}[1]{D_{#1}}
\def\Epure{E^\eps_{\rm p}}
\def\Edef{E_{\rm d}}
\def\Edefone{\tilde{E}_{\rm d}}
\def\Eext{E^\eps_{\rm f}}
\def\Ee{E^\eps}
\def\Einf{\tilde{E}_\infty}
\def\Fe{\mathcal{F}^\eps}
\def\Fc{\mathcal{F}_0}
\def\Fonee{\mathcal{F}^\eps_1}
\def\Fone{\mathcal{F}_1}
\def\phin{\phi_1}
\def\phinn{\phi_2}
\def\psin{\psi_1}
\def\psinn{\psi_2}
\def\Phin{\Phi_1}
\def\Phinn{\Phi_2}
\def\Psin{\Psi_1}
\def\Psinn{\Psi_2}
\def\Pone{\PP^1_\eps(\Om)}
\def\Pe{\mathcal{P}_\eps}
\def\Te{\mathcal{T}^\eps}
\newsavebox{\tempbox}
\title[$\G$-expansion for a 1D model with point defect]{Gamma-expansion for a 1D Confined Lennard-Jones model with point defect}
\author{Thomas Hudson}
\address{Mathematical Institute, Oxford, OX1 3LB, UK}
\email{hudson@maths.ox.ac.uk}
\begin{document}
  \begin{abstract}
    We compute a rigorous asymptotic expansion of the energy of a point defect
    in a 1D chain of atoms with second neighbour interactions. We propose the
    Confined Lennard-Jones model for interatomic interactions, where it is 
    assumed that nearest neighbour potentials are globally convex and second
    neighbour potentials are globally concave. We derive the $\G$-limit for the
    energy functional as the number of atoms per period tends to infinity and
    derive an explicit form for the first order term in a $\G$-expansion in
    terms of an infinite cell problem.
    We prove exponential decay properties for minimisers of the energy in the
    infinite cell problem, suggesting that the perturbation to the deformation
    introduced by the defect is confined to a thin boundary layer.
  \end{abstract}
  
  \maketitle
  
  \section{Introduction}
  \label{sec:intro}
  The analysis of discrete lattice systems and their relationship to continuum 
  mechanics is currently a growing area of study within applied analysis. Many
  rigorous results have been obtained in the past ten years connecting discrete
  models with continuum limits, which are extensively surveyed in 
  \cite{BLBL07}. The most well-developed approaches have been either to apply
  $\G$-convergence\footnote{%
  For an introduction to $\G$-convergence, see 
  \cite{BraidesGconv,DalMasoGconv}.}
  to discrete energy functionals parametrised by the number of 
  atoms per unit volume in the model (see
  \cite{Schmidt:2006,AlicCic:2004,BraiGel:2002}), or to apply forms of
  the inverse function theorem to show that for a discrete energy with the same
  parameter fixed, the Cauchy--Born rule holds; i.e. for a given atomistic
  deformation and a certain range of atomic densities there exist continuum
  deformations which are close in some norm, and have a similar energy (see 
  \cite{OrtnerTheil:12,E:2007a}). 
  
  Here, we take the former approach. We build upon recent works on surface
  energies in discrete systems \cite{SSZ:2011,BraiCic:2007}, which employ
  `$\G$-development' as first defined in \cite{AnzBaldo:93}, and extensively
  discussed in \cite{BraiTru:2008}. We define energy functionals with and
  without defects, and present the Confined Lennard-Jones model for interatomic
  interactions, which we motivate with a formal analysis.
  We then investigate the scaling of the perturbation to the
  energy which is introduced by the defect. We also provide a concrete cell
  problem that may be used for explicit computation of the first-order energy,
  and show that a minimiser of this cell problem decays exponentially away from
  the defect. This allows us to conclude that minimisers of the energies with
  and without a defect are essentially the same except on a thin boundary
  layer around the defect.
  
  \subsection{Motivation}
  \label{subsec:motiv}
  The tools developed to study the relationship between atomistic and continuum
  models rely upon the high level of symmetry which is maintained after 
  deforming a crystal. However, the pure lattice behaviour is not the only 
  factor in determining the bulk properties of such materials. The last century
  saw a revolution in the materials science community, as it was realised that
  lattice defects can change the strength of an otherwise perfect crystal by
  orders of magnitude. Understanding defects, how they scale and in what
  rigorous ways one might modify the continuum approximation of crystalline
  solids to take them into account is therefore key to developing our
  understanding of how best to model and predict their behaviour.
  
  As a first step towards this goal, we consider arguably the simplest 
  crystalline defect, a dilute point defect. A point defect is an interruption
  of the pure lattice structure caused by changing an atom at some lattice site
  (called an impurity) or by inserting an atom into the structure at a point
  which is not a lattice site (called an interstitial). In 1D, impurities and
  interstitials are essentially identical when atoms are treated as point 
  particles with hard-core interactions, since atoms cannot move past one
  another, and so it is easy to modify the reference configuration to take such
  a defect into account. In higher dimensions, the two defects are qualitatively
  different (see Figure \ref{fig:defects}), with an interstitial requiring an
  additional point in the reference configuration which breaks the symmetry.
  
  \begin{figure}
  \centering
  \subfloat[2D Defects]{
    \begin{minipage}[c][0.5\width]{0.45\textwidth}
      \centering
      \includegraphics[width=0.45\textwidth]{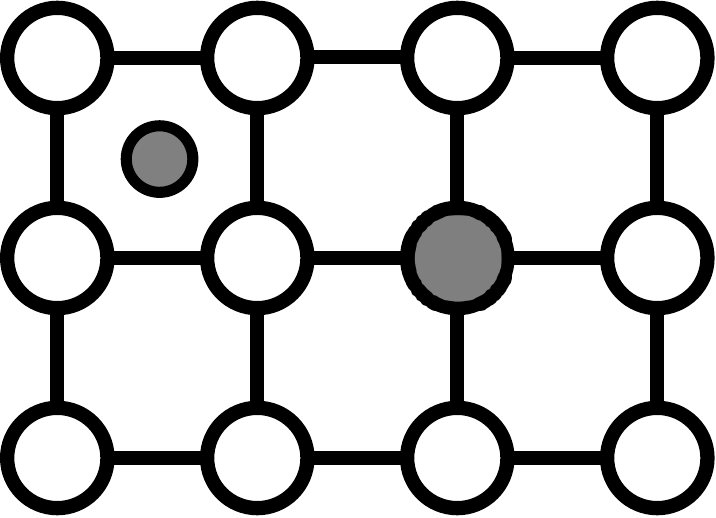}
    \end{minipage}}
  \subfloat[1D Defect]{
    \begin{minipage}[c][0.5\width]{0.45\textwidth}
      \centering
      \includegraphics[width=0.45\textwidth]{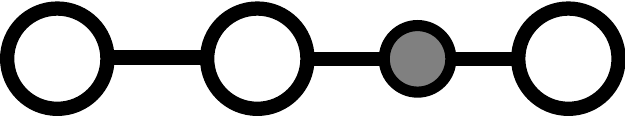}%
    \end{minipage}}
    \caption{Impurities and Interstitials.}%
    \label{fig:defects}%
  \end{figure}
  
  The model we analyse is one-dimensional, and to avoid surface effects, we use
  a periodic reference domain, so that in effect the atoms lie on a
  one-dimensional torus. The defect considered is dilute since only one atom in
  the chain is of a different type.
  
  By computing the $\G$-limit of the sequence of energy functionals for the 
  model described here, we arrive at an energy which encodes some of the
  properties of the minimisers of the functionals along the sequence, but no
  quantitative information about the error made. Computing higher-order limits 
  gives further, more quantitative control on the energy minima, and in this 
  case will also allow us to say something more qualitative about the
  minimisers.
   
  \subsection{Outline}
  \label{subsec:outline}
  As discussed above, we apply $\G$-convergence to a sequence of atomistic
  energy functionals that depend on the parameter $\eps$, which is the inverse
  of the number of atoms per unit volume.
  
  In the remainder of Section \ref{sec:intro}, we propose a model for
  interatomic interactions in a 1D chain including a point defect. We then make
  a formal analysis of the model to motivate this study and the Confined 
  Lennard-Jones model which we propose, before reformulating the problem in a
  format amenable to analysis in the framework of the one-dimensional Calculus
  of Variations.
  
  In Section \ref{sec:0}, we derive the $\G$-limit for the series of 
  functionals defined in Section \ref{sec:intro} as $\eps$ tends to zero,
  and note that the introduction of the defect does not perturb the $\G$-limit
  at this order.
  
  In Section \ref{sec:props.Ec.min}, we collect and prove some results about
  the minimum problem for the $0^\text{th}$-order $\G$-limit of the atomistic
  energies, including existence, uniqueness and regularity for minimisers.
  
  In Section \ref{sec:1}, we proceed to derive a first-order $\G$-limit,
  expressing it in terms of a minimisation problem in an infinite cell, and
  prove some properties of this minimum problem along the way.
  
  \subsection{Physical model}
  \label{subsec:model}
  For now, fix $N\in\N$. We consider $2N$ atoms indexed by 
  \begin{align*}
    i\in\{-N,\ldots,N-1\}
  \end{align*}
  which have a spatial density of $2N/L$, where $L$ is the total length of the
  deformed configuration. To make the domain periodic, we identify an atom
  indexed by $i=N$ with the atom $i=-N$ so that we avoid boundary effects, and
  defining $\eps=1/2N$, we choose to take reference positions for these atoms
  to be
  \begin{align*}
    x_i:=i\eps\in\Om_\eps:=\big\{-\smfrac12,-\smfrac12+\eps,\ldots,
    \smfrac12\big\}.
  \end{align*}
  We also define
  \begin{align*}
    \Om:=[-\smfrac12,\smfrac12],
  \end{align*}
  so that $\Om_\eps=\Om\cap\eps\Z$.
  It should be noted at the outset that the choice to use $2N$ atoms will not be
  restrictive to our analysis but will make some of the concepts easier to 
  elucidate, and that we will frequently write $\eps\conv0$ to mean
  $N\conv\infty$.
  
  Fixing the coordinate system so that atom $-N$ lies at $0$, any configuration
  can be described by a map
  \begin{align*}
    y:\Om_\eps\conv[0,L]\qquad\text{such that}\qquad y(-\smfrac12)=0,
    y(\smfrac12)=L.
  \end{align*}
  We will use the shorthand
  \begin{align*}
    y_i:=y(x_i),
  \end{align*}
  and extend $y$ to a map on the whole of $\eps\Z$ by defining
  \begin{align*}
    y_{2kN+i+1}-y_{2kN+i}:= y_{i+1}-y_i
  \end{align*}
  for any $k\in\Z$.
  
  The atoms in our model are assumed to interact through pair potentials which
  decay rapidly so that it suffices to consider an interaction between atoms and
  their 2 immediate neighbours on either side. As explained in Section
  \ref{subsec:motiv}, all atoms except one are of the same type, regarded as the
  `pure' species. As in \cite{BDMG99}, the potential energy of a bond between
  atoms is assumed to be expressed as a function of the relative displacement
  \begin{align*}
    \Da jy_i:=\frac{y_{i+j}-y_{i}}{x_{i+j}-x_i}=\frac{y_{i+j}-y_i}{j\eps}.
  \end{align*}
  In the case where all atoms are of the pure species, a bond with relative
  length $s$ has energy $\phi_1(t)$ for nearest neighbours, and $\phi_2(t)$ for
  second neighbours. The internal energy of the configuration arising from the
  interatomic forces is
  \begin{align*}
    \Epure(y):=\sum_{i=-N}^{N-1}\phin(\Da1y_i)+\phinn(\Da2y_i).
  \end{align*}
  
  Since in each configuration we assume there is a single defect, we assume
  without loss of generality that the defect is at index $i=0$. The energy of
  bonds of relative length $s$ between this atom and its neighbours are
  $\psin(t)$ for the nearest neighbours and $\psinn(t)$ for second neighbours
  (see Figure \ref{fig:chain}).
  \begin{figure}[t]
    \caption{Pair Potentials}
    \label{fig:chain}
    \includegraphics[height=3cm]{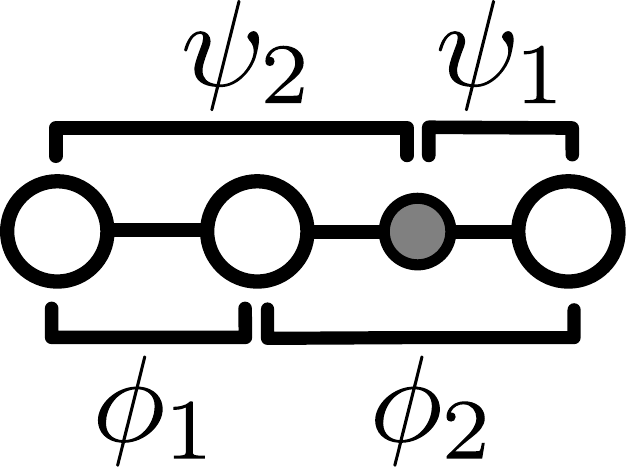}
  \end{figure}
  The introduction of the defect causes a modification of the energy which is
  given by the addition of the following energy term:
  \begin{align*}
    \Edef(y):=\psinn(\Da2y_{-2})-\phinn(\Da2y_{-2})+\psin(\Da1y_{-1})
      -\phin(\Da1y_{-1})\\
    +\psin(\Da1y_0)-\phin(\Da1y_0)+\psinn(\Da2y_0)
      -\phinn(\Da2y_0).
  \end{align*}
  Finally, we also consider dead loads $f_i$ acting on each atom.
  Taking as initial positions the points $L(x_i+\smfrac12)$, the work done by
  these forces is
  \begin{align*}
    \Eext(y):=\sum_{i=-N}^{N-1} f_i\big(y_i-L(x_i+\smfrac12)\big).
  \end{align*}
  This term can be though of as the work done by a linearisation of some
  external force field near the homogeneous linear state $y_i=L(x_i+\smfrac12)$.
  To keep notation concise, we will frequently write $u_i$ to mean
  \begin{align*}
    u_i:=y_i-L(x_i+\smfrac12),
  \end{align*}
  and we extend $f$ by periodicity to a map over $\eps\Z$ by defining
  \begin{align*}
    f_{2kN+i}=f_i
  \end{align*}
  for any $k\in\Z$.
  
  The total energy for the atomistic system considered is therefore
  \begin{align*}
    \Ee(y):=\Epure(y)+\Edef(y)+\Eext(y).
  \end{align*}
  
  \subsection{Formal analysis}
  \label{subsec:formal}
  We expect that atoms should minimise the energy $\Ee$, and we therefore seek
  to characterise the minimal energy and the states which attain this minimum.
  We will consider the situation when $N$ is large, and when the material is
  behaving elastically. In this case, interatomic displacements should vary
  slowly over the domain, and so we assume the Cauchy--Born hypothesis holds 
  (for more information, see \cite{Zanzotto96}). This states that interatomic
  displacements follow linear deformations of small volumes of the solid, and
  so we assume that
  \begin{align*}
    \Da 1y_i,\Da2y_i&\simeq Dy(x_i),
  \end{align*}
  where $y:\Om\conv[0,L]$ is some suitably smooth function describing the 
  displacement. This means that the energy
  \begin{align*}
    \phin(\Da1y_i)+\phinn(\Da2y_i)&\simeq \phin\big(Dy(x_i)\big)
      +\phinn\big(Dy(x_i)\big).
  \end{align*}
  Motivated by this we define $W$, the continuum elastic energy density to be
  \begin{align}
    W(t):=\phin(t)+\phinn(t).\label{eq:Wdef}
  \end{align}
  The total energy is now approximately
  \begin{align*}
    \Ee(y)&\simeq\sum_{i=-N}^{N-1}W(Dy(x_i))+\Edef(y)+\Eext(y).
  \end{align*}
  We expect this energy to grow linearly as the number of atoms increases, so it
  makes sense to look at the mean energy per atom, $\eps\Ee$, as $N$ gets large.
  The size of the defect is fixed and small, so $\eps\Edef(y)$ should vanish as
  $\eps\conv0$, and
  \begin{align*}
    \eps\Ee(y)&\simeq \eps\sum_{i=-N}^{N-1}W\big(Dy(x_i)\big)+f_iu_i\\
    &\simeq \Fc(y):=\int_\Om W(Dy)+fu\dx,
  \end{align*}
  where $u(x)=y(x)-L(x+\smfrac12)$.
  The minimiser of the right hand side should satisfy the Euler--Lagrange
  equation for this functional,
  \begin{align*}
    \smfrac{\rm d}{\dx}\big(W^{\prime}(Dy)\big)=f(x).
  \end{align*}
  This equation can be integrated to give
  \begin{align*}
    W^\prime(Dy)=\sigma(x)+\Sigma, \qquad\text{where}\qquad
      \sigma(x)=\int_{-1/2}^xf(t)\dt.
  \end{align*}
  The defect should then contribute a further term proportional to its size,
  $O(\eps)$, to the energy. Since the mean energy is only perturbed by a small
  amount, we should expect that any perturbation to the minimiser would also
  occur close to the defect, due to the mismatch between $\phi_i$ and $\psi_i$
  there. The defect always remains at $i=0$, so when $N$ is large, we make the 
  ansatz that close to the defect $\Da1y_i\simeq F_0+r_i$, where $F_0:=Dy(0)$
  and $r_i$ is a small perturbation. Defining
  \begin{align*}
    \sigma^\eps_{-N}&:=-\smfrac12\eps f_{-N}\qquad \text{and}\qquad
    \sigma^\eps_i:=\sigma^\eps_{i-1}+\eps f_{i-1},
  \end{align*}
  then integrating by parts, we can rewrite the external force terms as
  \begin{align*}
    \sum_{i=-N}^{N-1}f_iu_i=\sum_{i=-N}^{N-1}\sigma^\eps_i\,\Da1u_i\quad
    \text{and}\quad\int_\Om fu\dx=\int_\Om \sigma \,Du\dx.
  \end{align*}
  This then means the additional contribution is
  \begin{multline*}
    \frac{\eps\Ee(y)-\Fc(y)}{\eps}\simeq \Einf(r):=\Edefone(r)+\sum_{i=-\infty}^{\infty}
      \Big(\phin\big(F_0+r_i\big)+\phinn\big(F_0+\smfrac{r_i+r_{i+1}}{2}\big)
      - W(F_0)\\
      +\sigma_i\big(F_0+r_i-L\big)-\sigma(0)\big(F_0-L\big)\Big),
  \end{multline*}
  where we have defined
  \begin{multline*}
    \Edef(r):=\psinn(F_0+\smfrac{r_{-2}+r_{-1}}{2})
      -\phinn(F_0+\smfrac{r_{-2}+r_{-1}}{2})+\psin(F_0+r_{-1})-\phin(F_0+r_{-1})
      \\
      +\phin(F_0+r_{0})-\psin(F_0+r_0)+\psinn(F_0+\smfrac{r_{0}+r_{1}}{2})
      -\phinn(F_0+\smfrac{r_{0}+r_{1}}{2}).
  \end{multline*}
  If $f$ is smooth enough, then for $i$ close to $0$ and $\eps$ small,
  $\sigma^\eps_i\simeq\sigma(0)$, so the integrated Euler--Lagrange
  equation for $\Fc$ gives
  \begin{align*}
    \Einf(r)\simeq \Edef(r)+\sum_{i=-\infty}^{\infty}
      \phin\big(F_0+r_i\big)+\phinn\big(F_0+\smfrac{r_i+r_{i+1}}{2}\big)
      - W(F_0)-\big(W^\prime(F_0)-\Sigma\big)r_i.
  \end{align*}
  The sum of the $\Sigma\,r_i$ terms should vanish due to the boundary
  conditions. Since we expect decaying solutions, we linearise in $r_i$
  away from the defect, giving
  \begin{align*}
    \Einf(r)\simeq E_{c}(y)+\smfrac12\sum_{|i|\geq R}
      \phin^{\prime\prime}(F_0)r_i^2
      +\phinn^{\prime\prime}(F_0)\big(\smfrac{r_{i}+r_{i+1}}{2}\big)^2.
  \end{align*}
  For a minimiser of this energy, the $r_i$ should approximately satisfy
  \begin{align*}
    \smfrac12\phinn^{\prime\prime}(F_0)(r_{i-1}+r_i)
    +\phin^{\prime\prime}(F_0)r_i
    +\smfrac12\phinn^{\prime\prime}(F_0)(r_i+r_{i+1})=0.
  \end{align*}
  Making the usual ansatz $r_i=a \lambda^i$, a solution must satisfy
  \begin{align*}
    \smfrac12\phinn^{\prime\prime}(F_0)
    +W^{\prime\prime}(F_0)\lambda
    +\smfrac12\phinn^{\prime\prime}(F_0)\lambda^2=0.
  \end{align*}
  If $\phin$ and $W$ are convex and $\phinn$ is concave at $F_0$, as is the
  case in Lennard-Jones type pair potentials, then a straightforward analysis
  of the roots of this equation implies that there are two positive real roots
  which multiply to give $1$. These two roots correspond to an exponentially
  decaying solution and an exponentially growing solution.
  
  Rigorous versions of these formal results will be the subject of this paper,
  and motivate the assumptions we make about the potentials and external
  force in the following section.

  \subsection{Confined Lennard-Jones Model}
  \label{subsec:assumptions}
  Motivated by the formal analysis carried out in the previous section, this
  section details the assumptions that we make about the pair potentials and
  external force field.
  
  We will assume that all potentials $\phi_i$ and $\psi_i$ are $\CC^2$ on
  the interval $(0,\infty)$.
  Additionally, we assume the potentials and external forces satisfy the
  following conditions.
  \begin{enumerate}
    \item The nearest neighbour potentials are infinite for negative bond 
      lengths and blow up as bond lengths approach zero, i.e.
      \begin{align*}
        \phin(t),\psin(t)&=+\infty\text{ for }s\leq0,\\
        \lim_{s\searrow0}\phin(t)=+\infty\quad&\text{and}\quad
        \lim_{s\searrow0}\psin(t)=+\infty.
      \end{align*}
    \item The nearest neighbour potentials are $l$-convex, i.e. for any $s>0$,
      \begin{align*}
        \phin^{\prime\prime}(t),\psin^{\prime\prime}(t)\geq l>0.
      \end{align*}
    \item The second neighbour potentials $\phinn$ and $\psinn$ are concave.
    \item The second neighbour potentials $\phinn$ and $\psinn$ are `dominated'
      by the nearest neighbour potentials $\phin$ and $\psin$, i.e. there exist
      constants  $\alpha\in(0,1)$ and $C\in\R$ such that
      \begin{align*}
        \phinn(x)&\geq-\alpha\,\phin(x)+C, &\phinn(x)&\geq-\alpha\,\psin(x)+C,\\
        \psinn(x)&\geq-\alpha\,\phin(x)+C, &\psinn(x)&\geq-\alpha\,\psin(x)+C.
      \end{align*}
    \item The `pure' potentials are such that the resulting continuum elastic 
      potential is $l$-convex, i.e. defining $W$ as in \eqref{eq:Wdef}, for any
      $t>0$,
      \begin{align*}
        W^{\prime\prime}(t)=\phin^{\prime\prime}(t)+\phinn^{\prime\prime}(t)
        \geq l.
      \end{align*}
    \item We assume $f_i=f(x_i)$ where $f\in\CC^2(\Om)$.
  \end{enumerate}
  
  \begin{figure}[t]
    \caption{Possible choices of $\phin$ and $\phinn$ with the assumptions
    prescribed which approximate a Lennard-Jones potential,
    $\phi_{\mathrm{LJ}}$.}
    \label{fig:LJpots}
    \includegraphics[width=0.5\textwidth]{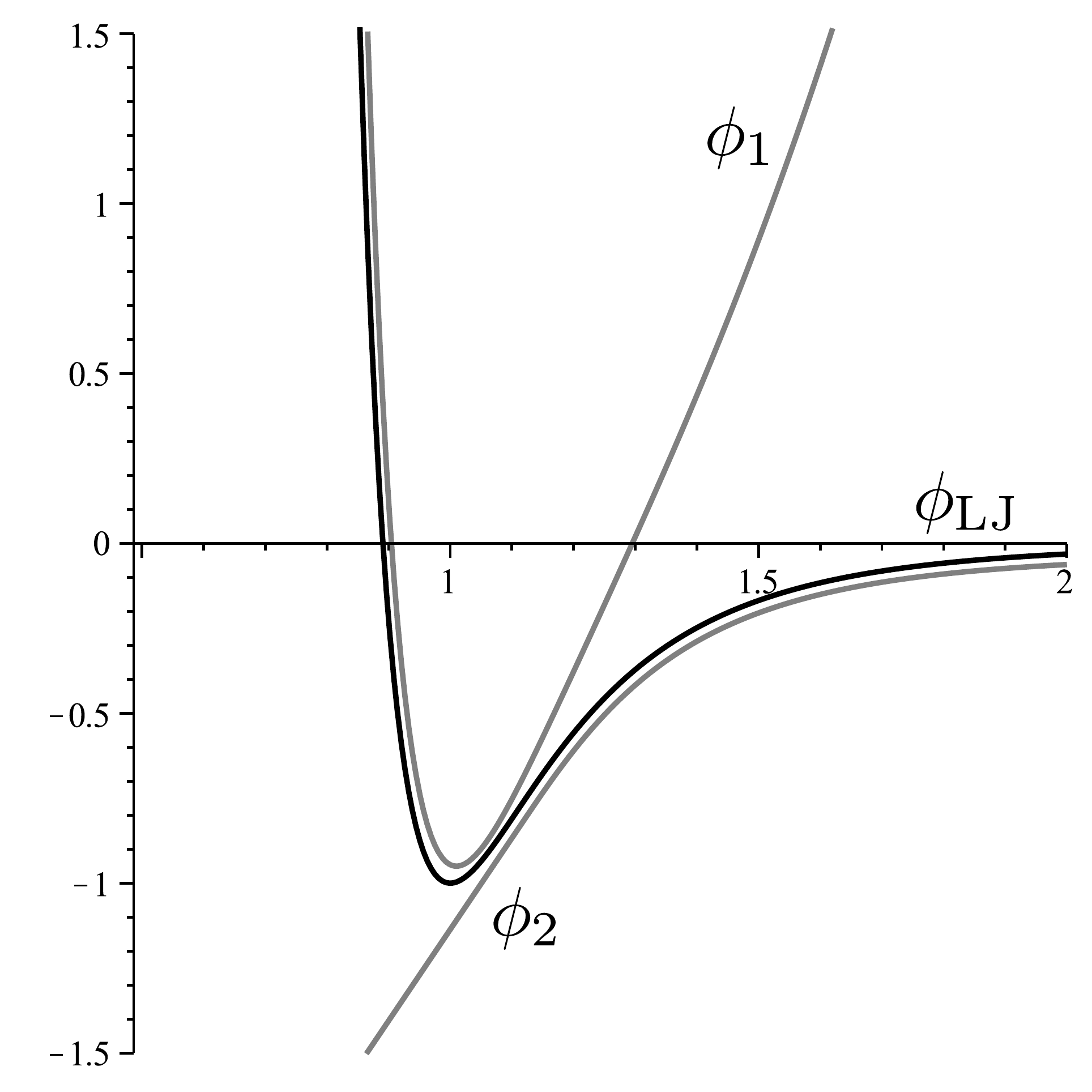}
  \end{figure}
  \medskip

  \begin{remark}
  Assumption (1) prevents atoms from exchanging positions with respect to the 
  reference configuration, and ensures that we prevent plastic deformation.
  
  Assumptions (2) and (3) are made to simulate the behaviour of a Lennard-Jones
  type potential, which is convex for short bond lengths, and then concave after
  for any bond length past some critical length; see for example Figure
  \ref{fig:LJpots}. When under strains in the elastic regime, the nearest
  neighbours lie in the convex part of the potential, and all other atoms lie
  in the concave part.
  
  Assumption (4) enforces the elastic behaviour of the material and prevents
  fracture from being favourable.
  
  Assumption (5) prevents any form of microstructure from forming, but since the
  decay of Lennard-Jones potentials used in applications is always relatively
  rapid, this assumption is reasonable, and for the sake of clarity we avoid
  significant complications to our analysis.
  \end{remark}

  These assumptions lead to the following facts which we will use frequently
  throughout this paper.
  The fact that $\phinn$ is concave implies that
  \begin{align}
    \smfrac12\phin(a)+\smfrac12\phin(b)+\phinn\big(\smfrac{a+b}{2}\big)
      \geq\smfrac12W(a)+\smfrac12W(b). \label{eq:pwlowerbnd}
  \end{align}
  By using the concavity of the second neighbour potentials again, we have
  \begin{multline*}
    \smfrac12\phin(a)+\psinn(\smfrac{a+b}{2})+\psin(b)+\phinn(\smfrac{b+c}{2})
      +\psin(c)+\psinn(\smfrac{c+d}{2})+\smfrac12\phin(d)\\
    \geq\smfrac12\big(\phin(a)+\psinn(a)\big)
      +\big(\psin(b)+\smfrac12\phinn(b)+\smfrac12\psinn(b)\big)\\
    +\big(\psin(c)+\smfrac12\psinn(c)+\smfrac12\phinn(c)\big)
      +\smfrac12\big(\phin(d)+\psinn(d)\big).
  \end{multline*}
  Assumption (4), that the behaviour of nearest neighbour potentials
  dominates, implies that
  \begin{multline}
    \smfrac12\phin(a)+\psinn(\smfrac{a+b}{2})+\psin(b)+\phinn(\smfrac{b+c}{2})
      +\psin(c)+\psinn(\smfrac{c+d}{2})+\smfrac12\phin(d)\\
    \geq(1-\alpha)\Big(\smfrac12\phin(a)+\psin(b)+\psin(c)+\smfrac12
      \phin(d)\Big)+3C,\\
    \geq \smfrac12l\,\Big(\smfrac12a^2+b^2+c^2+\smfrac12d^2\Big)+3C,
      \label{eq:defassumbd}
  \end{multline}
  where on the last line we have adjusted the definition of $l$ to keep 
  estimates concise throughout this paper.
  This estimate will allow us to prove coercivity results which ensure that
  sequences of deformations with uniformly bounded energies are compact.
  
  \subsection{Function spaces and topologies}
  \label{subsec:topology}
  In this section we define the topologies with respect to which we will carry
  out our analysis.

  Throughout this paper, we use the usual notation for Lebesgue and Sobolev
  spaces, and use $\norm{\,\cdot\,}_p$ and $\norm{\,\cdot\,}_{1,p}$ to denote
  the usual norms on $\LL^p(\Om)$ and $\WW^{1,p}(\Om)$. We write
  $y^\eps\conv y$ in $\LL^p$ or $y^\eps\conv y$ in $\WW^{1,p}$ (or $\HH^1$) to
  mean
  \begin{align*}
    \norm{y^\eps-y}_p\conv0\quad\text{or}
    \quad\norm{y^\eps-y}_{1,p}\conv 0
  \end{align*}
  respectively. We will also refer to convergence in the weak topology on
  $\HH^1(\Om)$; we say $y_\eps\wconv y$ or $y_\eps$ converges weakly to $y$ in
  $\HH^1$ if for any $f\in\HH^{-1}(\Om)$,
  \begin{align*}
    \langle f,y^\eps\rangle\conv\langle f,y\rangle,
  \end{align*}
  where $\langle\,\cdot\,,\cdot\,\rangle$ is the usual inner product on
  $\HH^1(\Om)$.
  
  Since we wish to find the $\G$-limit of the sequence of
  energy functionals $\eps \Ee$ defined above, we need to ensure they are
  defined over the same space. Following Braides, Dal Maso and Garroni in 
  \cite{BDMG99}, we associate any discrete deformation $y^\eps$ with a
  piecewise linear interpolant defined everywhere on $\Om$,
  \begin{align*}
    y^\eps(x):=y^\eps_i+\Da1y^\eps_i(x-x_i)\quad\text{for any}\quad x\in
      (x_i,x_{i+1}).
  \end{align*}
  For each choice of $\eps$, these linear interpolants lie in the spaces
  \begin{align*}
    \Pone:=\big\{y\in\WW^{1,\infty}(\Om):y\text{ is linear on }(x_i,x_{i+1}),\,
      x_i,x_{i+1}\in\Om_\eps\big\}.
  \end{align*}
  The admissible deformations $\AN$ are defined to be the set of such
  interpolants with the correct boundary conditions:
  \begin{align*}
    \AN:=\big\{y\in\Pone:y(-\smfrac12)=0,y(\smfrac12)=L\big\}.
  \end{align*}
  For any $\eps$,
  \begin{align*}
    \AN\subset\A:=\big\{y\in\HH^1(\Om):y(-\smfrac12)=0,y(\smfrac12)=L\big\};
  \end{align*}
  in fact, in the weak topology on $\HH^1(\Om)$, it is well-known that the
  sequential closure
  \begin{align*}
    \closure{\bigcup_{N=1}^\infty\AN}=\A.
  \end{align*}
  In Section \ref{subsec:0liminf}, we will show that this topology arises
  from the assumptions made in Section \ref{subsec:assumptions}.
  
  We are now in a position to suitably extend the functionals $\eps \Ee$
  so that we can take a $\G$-limit. Guided by other work for similar models
  (amongst others, see those used in \cite{BraiCic:2007,BraiGel:2002,BDMG99})
  we define $\Fe:\A\conv\R$ to be
  \begin{align*}
    \Fe(y):=\begin{cases}
      \eps \Ee(y) &y\in\AN\\
      +\infty &\text{otherwise.}
    \end{cases}
  \end{align*}

  \section{The $0^\text{th}$-order $\G$-limit}
  \label{sec:0}
  Our first result gives the first term in the $\G$-expansion of $\Fe$.
  \begin{theorem}[$0^\text{th}$-order $\G$-limit]
  \label{thm:0Gconv}
  With respect to convergence in $\LL^2$,
  \begin{align*}
    \Glim_{\eps\conv0}\Fe(y)=\Fc(y):=\int_\Om W(Dy)+f\big(y-L(x+\smfrac12)\big)
    \dx.
  \end{align*}
  \end{theorem}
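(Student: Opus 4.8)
The plan is to verify the two defining inequalities of $\G$-convergence with respect to $\LL^2$: the \emph{liminf inequality}, that $\liminf_{\eps\conv0}\Fe(y^\eps)\geq\Fc(y)$ whenever $y^\eps\conv y$ in $\LL^2$, and the existence of a \emph{recovery sequence} realising $\limsup_{\eps\conv0}\Fe(y^\eps)\leq\Fc(y)$ for each $y$. In both cases one may assume the relevant energy is finite, since otherwise the inequalities are trivial; in particular for the recovery sequence I assume $\Fc(y)<\infty$, which forces $Dy>0$ almost everywhere and $W(Dy)\in\LL^1(\Om)$.

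For the liminf inequality I would first extract a subsequence (not relabelled) attaining the liminf and along which $\sup_\eps\Fe(y^\eps)<\infty$; in particular each $y^\eps\in\AN$. Coercivity is the first step: using the concavity-based estimate \eqref{eq:defassumbd} on the block of bonds adjacent to the defect, together with \eqref{eq:pwlowerbnd} and the $l$-convexity of $W$ on the remaining bonds, one controls $\norm{Dy^\eps}_2$ uniformly, so that $y^\eps$ is bounded in $\HH^1(\Om)$ and hence, along a further subsequence, $y^\eps\wconv y$ in $\HH^1$ with the same limit $y\in\A$. The core lower bound then discards the non-local second-neighbour coupling: summing \eqref{eq:pwlowerbnd} with $a=\Da1y^\eps_i$ and $b=\Da1y^\eps_{i+1}$, so that $\tfrac{a+b}2=\Da2y^\eps_i$, over the bonds not meeting the defect telescopes by periodicity into $\int_\Om W(Dy^\eps)\dx$, while the finitely many bonds forming the defect block are estimated from below by \eqref{eq:defassumbd}. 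Since the right-hand side of \eqref{eq:defassumbd} exceeds the constant $3C$ plus a nonnegative quadratic, the block contributes only $O(\eps)$ and cannot depress the limit, so that $\eps(\Epure+\Edef)(y^\eps)\geq\int_\Om W(Dy^\eps)\dx+O(\eps)$. Finally $\eps\Eext(y^\eps)$ is a Riemann sum converging to $\int_\Om f\,(y-L(x+\tfrac12))\dx$ because $f\in\CC^2(\Om)$ and $y^\eps\conv y$ in $\LL^2$, while weak lower semicontinuity of the convex integrand $W$ gives $\liminf_\eps\int_\Om W(Dy^\eps)\geq\int_\Om W(Dy)$, and combining these yields the claim.

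For the recovery sequence I would begin with $y$ smooth and satisfying $Dy\geq\delta>0$, taking $y^\eps$ to be the piecewise linear interpolant of $y$ on $\Om_\eps$, which lies in $\AN$ and converges to $y$ in $\LL^2$. Then $\Da1y^\eps_i$ and $\Da2y^\eps_i$ both converge uniformly to $Dy(x_i)$, so $\eps\Epure(y^\eps)\conv\int_\Om\big(\phin(Dy)+\phinn(Dy)\big)\dx=\int_\Om W(Dy)\dx$ by convergence of Riemann sums, using continuity of $\phin,\phinn$ on the compact range of $Dy$ in $(0,\infty)$. The defect term $\eps\Edef(y^\eps)$ involves only finitely many bonds of bounded relative length, hence is $O(\eps)$ and vanishes, and $\eps\Eext(y^\eps)$ again converges to the force integral, giving equality in the limit for smooth $y$. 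For general $y\in\A$ with $\Fc(y)<\infty$, I would approximate by such smooth functions $y^{(k)}\conv y$ with $\Fc(y^{(k)})\to\Fc(y)$, which is possible because $W$ is continuous and finite on $(0,\infty)$ and $Dy>0$ a.e.; since the $\G$-$\limsup$ functional is $\LL^2$-lower semicontinuous, this transfers the bound from the approximants to $y$.

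The main obstacle is the liminf inequality, specifically the treatment of the second-neighbour interactions. Because the energy sees $\Da2y^\eps_i$, the \emph{average} of two consecutive nearest-neighbour strains, one cannot directly recognise the bulk sum as a Riemann sum for $\int_\Om W(Dy)$; the convexity–concavity identity \eqref{eq:pwlowerbnd} is exactly what converts this non-local term into a pointwise lower bound by $W$, and checking that the bookkeeping at the interface between the bulk and the defect block is consistent, so that the defect genuinely enters only at order $\eps$, is the delicate point. The approximation step in the recovery sequence, needed to cope with the blow-up of $W$ as the strain approaches zero, is routine by comparison.
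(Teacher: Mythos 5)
Your liminf argument contains one step that fails as written: the claim that
\begin{align*}
  \eps\big(\Epure+\Edef\big)(y^\eps)\geq\int_\Om W(Dy^\eps)\dx+O(\eps).
\end{align*}
Summing \eqref{eq:pwlowerbnd} over the pure bonds produces $\int_{\Om\setminus I_\eps}W(Dy^\eps)\dx$ (up to the half-weights at the interface), where $I_\eps$ is the interval of width $O(\eps)$ spanned by the four bonds touching the defect; on $I_\eps$ the only available lower bound is \eqref{eq:defassumbd}, which controls a \emph{quadratic} in the strains plus a constant, not $W$ of the strains. Nothing in the assumptions compares $\psin$ with $\phin$ near $0$ (Assumption (4) only dominates second-neighbour by nearest-neighbour potentials), so one may have, say, $\phin(t)\sim t^{-12}$ and $\psin(t)\sim t^{-2}$ as $t\searrow0$; taking $\Da1y^\eps_0=s_\eps\conv0$ with $\eps\,\phin(s_\eps)\sim1$ but $\eps\,\psin(s_\eps)\conv0$ gives a sequence with $\Fe(y^\eps)$ uniformly bounded along which $\int_\Om W(Dy^\eps)\dx$ exceeds $\eps(\Epure+\Edef)(y^\eps)$ by an amount of order one concentrated on $I_\eps$. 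So the defect block's contribution to the \emph{energy} is $O(\eps)$, but its contribution to $\int_\Om W(Dy^\eps)\dx$ need not be, and since the excised set $I_\eps$ moves with $\eps$ you also cannot directly invoke weak lower semicontinuity on $\Om\setminus I_\eps$. The paper repairs exactly this point: fix $\delta>0$, bound the energy of all bonds inside $(-\delta,\delta)$ below by $C\delta$ (using only that $W$ and the defect block are bounded below), apply weak $\HH^1$ lower semicontinuity to $\int_{\Om\setminus(-\delta,\delta)}W(D\,\cdot\,)\dx$, and send $\delta\conv0$ last. With this insertion your liminf argument (whose coercivity step matches the paper's Lemma \ref{lem:Lp=>weak}) goes through.

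Your limsup construction is viable but genuinely different from the paper's. You smooth globally and invoke $\LL^2$-lower semicontinuity of the $\G$-limsup, which is a correct scheme, but the density step you call ``routine'' is the whole content there: mollifying $y$ directly destroys the boundary values and gives no lower bound on the gradient, so one should instead mollify the $1$-periodic extension of $Dy$ and integrate, using Jensen and periodicity to get $\int_\Om W(Dy\star\rho)\dx\leq\int_\Om W(Dy)\dx$ together with a strictly positive, continuous mollified gradient; also note that $\eps\Edef=O(\eps)$ needs strains bounded \emph{away from zero}, which your hypothesis $Dy\geq\delta$ supplies. The paper avoids any density lemma: it interpolates $y$ itself and proves $\limsup_{\eps\conv0}\eps\Epure(\Te y)\leq\int_\Om W(Dy)\dx$ by Fatou's lemma against an integrable majorant built from Assumption (4) (namely $\phin\leq\smfrac1{1-\alpha}W+C$) and Jensen's inequality, handling the defect by linearising $y$ only on $(-\eta,\eta)$ and diagonalising in $\eta$. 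The paper's route exploits the Confined Lennard-Jones structure; yours is more generic, at the cost of pushing the difficulty into the approximation lemma, which should be proved rather than asserted.
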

  
  The $\G$-convergence of the internal energy in this result is already covered 
  by Theorem~3.1 in \cite{BraiCic:2007}, but we present a complete proof
  here in order to demonstrate the special structure of the Confined
  Lennard-Jones model described in Section \ref{subsec:assumptions}. By
  exploiting the convexity and concavity of the potentials, we do not have to
  resort to a homogenisation formula to prove the liminf inequality.
  
  As with any $\G$-convergence result, we need to prove the relevant liminf and
  limsup inequalities. We present the proofs of these inequalities in turn.
  
  \subsection{The liminf inequality}
  \label{subsec:0liminf}
  The liminf inequality is the following statement:
  \begin{proposition}
  \label{prop:0liminf}
  If $y^\eps\conv y$ in $\LL^2$ then
  \begin{align*}
    \liminf_{\eps\conv0}\Fe(y^\eps)\geq\Fc(y).
  \end{align*}
  \end{proposition}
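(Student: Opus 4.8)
The plan is to reduce to a bounded-energy subsequence, bootstrap the weak $\HH^1$ compactness that underlies our choice of topology from the structural inequalities \eqref{eq:pwlowerbnd} and \eqref{eq:defassumbd}, and then pass to the limit using weak lower semicontinuity of the convex bulk energy. First I would observe that if $\liminf_{\eps\conv0}\Fe(y^\eps)=+\infty$ there is nothing to prove, so I may extract a (non-relabelled) subsequence along which $\Fe(y^\eps)$ stays bounded; in particular $y^\eps\in\AN$, so each $y^\eps$ is the interpolant of a genuine discrete deformation.

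The key algebraic step is to split $\Epure(y^\eps)+\Edef(y^\eps)$ into a bulk part and a defect cluster. Since $\Da2y_i=\smfrac12(\Da1y_i+\Da1y_{i+1})$, each second neighbour bond pairs with its two neighbouring first neighbour bonds; applying \eqref{eq:pwlowerbnd} to every such pair lying away from the defect bounds the bulk below by a sum of $W(\Da1y_i)$, which is exactly $\int_{\Om\setminus I_\eps}W(Dy^\eps)\dx$ for $I_\eps=(-2\eps,2\eps)$. The remaining seven terms are precisely the left-hand side of \eqref{eq:defassumbd} with $a=\Da1y_{-2}$, $b=\Da1y_{-1}$, $c=\Da1y_0$, $d=\Da1y_1$, so that estimate controls the cluster from below. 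Together these give
\begin{multline*}
  \eps\big(\Epure(y^\eps)+\Edef(y^\eps)\big)\geq\int_{\Om\setminus I_\eps}W(Dy^\eps)\dx
    +\smfrac\eps2\big(W(\Da1y_{-2})+W(\Da1y_1)\big)\\
  +\eps\Big(\smfrac l2\big(\smfrac12(\Da1y_{-2})^2+(\Da1y_{-1})^2
    +(\Da1y_0)^2+\smfrac12(\Da1y_1)^2\big)+3C\Big).
\end{multline*}

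From here I would first read off coercivity. As $W$ is $l$-convex by assumption~(5) it has an affine minorant $W(t)\geq\gamma t+\delta$ and a quadratic lower bound; combined with the boundedness of $\eps\Eext(y^\eps)$ — which follows from $\norm f_\infty<\infty$ and the $\LL^2$-boundedness of $u^\eps$ — the displayed inequality forces $\norm{Dy^\eps}_2$ to stay bounded, the cluster term supplying the missing control on $I_\eps$. Weak $\HH^1$ compactness and uniqueness of limits then give $y^\eps\wconv y$ in $\HH^1$ with $y\in\A$, so that the weak $\HH^1$ topology promised in Section~\ref{subsec:topology} indeed arises here; moreover the compact embedding $\HH^1\hookrightarrow\CC^0$ yields $y^\eps\conv y$ uniformly. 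The force term is a harmless continuous perturbation: $\eps\Eext(y^\eps)=\eps\sum_i f(x_i)u^\eps_i$ is a Riemann-sum approximation of $\int_\Om f\,u^\eps\dx$, and since $f\in\CC^2(\Om)$ and $u^\eps\conv u$ in $\LL^2$ it converges to $\int_\Om f\,u\dx$.

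The main obstacle is the limit of the bulk term, a weak lower semicontinuity statement on the moving domain $\Om\setminus I_\eps$; the delicacy is that I have no pointwise control on the strains inside $I_\eps$, where they may be as large as $\eps^{-1/2}$. I would bypass this by exhausting $\Om$ by compact sets avoiding the defect: for fixed open $U$ with $0\notin\closure U$ one has $U\subset\Om\setminus I_\eps$ for small $\eps$, so the standard weak lower semicontinuity of $v\mapsto\int_U W(v)$ for convex $W$ gives $\liminf_{\eps\conv0}\int_U W(Dy^\eps)\dx\geq\int_U W(Dy)\dx$; since $W$ is bounded below, the remaining piece costs only $O(|\Om\setminus U|)$, and letting $U\uparrow\Om$ yields $\liminf_{\eps\conv0}\int_{\Om\setminus I_\eps}W(Dy^\eps)\dx\geq\int_\Om W(Dy)\dx$. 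Finally the two boundary half-terms obey $\smfrac\eps2 W(\Da1y_j)\geq\smfrac\gamma2(y^\eps_{j+1}-y^\eps_j)+\smfrac\delta2\eps\conv0$ by the uniform convergence of $y^\eps$, and the cluster term is bounded below by $3C\eps\conv0$; both therefore have nonnegative liminf. Assembling the three contributions gives $\liminf_{\eps\conv0}\Fe(y^\eps)\geq\int_\Om W(Dy)\dx+\int_\Om f\,u\dx=\Fc(y)$, as claimed.
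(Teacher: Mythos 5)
Your proposal is correct and follows essentially the same route as the paper: the same splitting of $\Epure+\Edef$ via \eqref{eq:pwlowerbnd} and the defect-cluster bound \eqref{eq:defassumbd}, the same coercivity step yielding $y^\eps\wconv y$ in $\HH^1$ (the paper's Lemma \ref{lem:Lp=>weak}), and the same weak lower semicontinuity of the convex bulk energy on sets avoiding the defect followed by an exhaustion argument (the paper's $\delta\conv0$ step). The only cosmetic difference is that you prove convergence and boundedness of $\eps\Eext(y^\eps)$ by a direct Riemann-sum estimate where the paper cites Lemma 5.3 of \cite{BDMG99}, which if anything makes the boundedness of the force term more transparently non-circular.
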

  
  \medskip
  To prove this, we use the fact that if
  \begin{align*}
    \sup_{\eps>0}\Fe(y^\eps)<+\infty\qquad\text{and}\qquad y^\eps\conv y
    \text{ in }\LL^2,
  \end{align*}
  then $y^\eps\wconv y$. This equicoercivity result is encoded in Lemma
  \ref{lem:Lp=>weak}. We then prove that $\Fe(y^\eps)$ is approximately bounded
  below by $\Fc(y^\eps)$ for any given deformation $y^\eps\in\AN$, and finally
  we can use the fact that $\Fc$ is lower semicontinuous with respect to weak
  convergence in $\HH^1$ to obtain the inequality required.
  
  \begin{lemma}[Weak $\HH^1$ coercivity]
  \label{lem:Lp=>weak}
  If $y^\eps\conv y$ in $\LL^2$ and $\Fe(y^\eps)$ is uniformly bounded for all
  $\eps>0$, then $y^\eps\wconv y$ in $\HH^1$.
  \end{lemma}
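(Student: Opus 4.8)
The plan is to establish weak $\HH^1$ convergence by extracting a uniform bound on the $\HH^1$ norms of the sequence $y^\eps$, and then invoking the uniqueness of weak limits together with the already-assumed strong $\LL^2$ convergence. The logic is standard: in a Hilbert space, a bounded sequence has weakly convergent subsequences, and if the strong $\LL^2$ limit is $y$, then every weakly convergent subsequence must have weak $\HH^1$ limit equal to $y$ (since weak $\HH^1$ convergence implies weak, hence strong along a further subsequence is not needed — weak $\HH^1$ convergence implies $\LL^2$ convergence of the same subsequence, forcing the limit to agree with $y$). A Urysohn-type subsequence argument then upgrades this to convergence of the whole sequence.

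The heart of the matter is therefore the \emph{a priori} bound. First I would use the uniform energy bound $\Fe(y^\eps)=\eps\Ee(y^\eps)\leq M$ to control $\norm{Dy^\eps}_2$. Since $y^\eps\in\AN$, the gradient $Dy^\eps$ is piecewise constant, equal to $\Da1y^\eps_i$ on each subinterval $(x_i,x_{i+1})$, so $\norm{Dy^\eps}_2^2=\eps\sum_i(\Da1y^\eps_i)^2$. The key tool is the coercivity estimate \eqref{eq:defassumbd}, which bounds a suitable grouping of nearest- and second-neighbour potential terms from below by $\smfrac12 l(\tfrac12 a^2+b^2+c^2+\tfrac12 d^2)$ plus a constant. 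I would group the terms of $\Epure+\Edef$ into blocks of this form — pairing each nearest-neighbour bond $\phin(\Da1y_i)$ with the adjacent second-neighbour bonds via the concavity inequality \eqref{eq:pwlowerbnd} for the pure bulk, and using \eqref{eq:defassumbd} near the defect — so that summing over $i$ yields a lower bound of the shape $\smfrac12 l\,\eps\sum_i(\Da1y^\eps_i)^2 - K$, i.e. $c\norm{Dy^\eps}_2^2 - K$ for constants independent of $\eps$.

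The remaining nuisance is the external-force term $\Eext$. Using the integration-by-parts identity $\sum_i f_i u_i = \sum_i \sigma^\eps_i\,\Da1u_i$ and the uniform boundedness of $\sigma^\eps_i$ (which follows from $f\in\CC^2$, hence bounded), the forcing contributes at most $C\eps\sum_i|\Da1u_i|\leq C\norm{Du^\eps}_1\leq C'\norm{Du^\eps}_2$ by Cauchy--Schwarz on the finite-measure domain $\Om$. Since $Du^\eps = Dy^\eps - L$, this is absorbed into the quadratic lower bound by Young's inequality, giving $M\geq \Fe(y^\eps)\geq \tfrac{c}{2}\norm{Dy^\eps}_2^2 - K'$, and hence a uniform $\HH^1$ bound once the Poincar\'e inequality (available because of the fixed boundary conditions $y(-\smfrac12)=0$) controls $\norm{y^\eps}_2$ in terms of $\norm{Dy^\eps}_2$.

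\textbf{The main obstacle} I expect is organising the summation-by-parts regrouping cleanly: the inequalities \eqref{eq:pwlowerbnd} and \eqref{eq:defassumbd} consume overlapping bonds, so one must be careful to account for each bond exactly once (or with controlled multiplicity) and to handle the finitely many defect-adjacent blocks separately from the periodic bulk, keeping all error constants uniform in $\eps$. Once the coercive lower bound $\Fe(y^\eps)\geq c\norm{y^\eps}_{1,2}^2 - K$ is secured, the weak-compactness and uniqueness-of-limit argument closes the proof immediately.
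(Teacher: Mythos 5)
Your proof is correct, and its core coincides with the paper's: the paper likewise groups the pure bonds via the concavity estimate \eqref{eq:pwlowerbnd} together with the $l$-convexity of $W$, handles the four defect-adjacent blocks separately via \eqref{eq:defassumbd}, and sums to obtain exactly the lower bound $\eps\Epure(y^\eps)+\eps\Edef(y^\eps)\geq l\norm{Dy^\eps}_2^2+C$ (its \eqref{eq:bulkbddbelow}), after which the same standard characterisation of weak $\HH^1$ convergence (boundedness in $\HH^1$ plus strong $\LL^2$ convergence of the sequence itself, which already gives the $\LL^2$ bound without Poincar\'e) closes the argument. The one genuine divergence is your treatment of the external force: the paper simply cites Lemma~5.3 of \cite{BDMG99} to conclude that $\eps\Eext(y^\eps)$ converges, hence is bounded --- a citation which, as the paper itself states when re-invoking it later, is formulated under a uniform bound on $\norm{Dy^\eps}_2$, so its use inside the coercivity proof is uncomfortably close to circular. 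Your self-contained alternative --- summation by parts to $\sum_i\sigma^\eps_i\,\Da1u^\eps_i$ with $|\sigma^\eps_i|\leq\norm{f}_\infty$, then Cauchy--Schwarz and Young's inequality to absorb $C\norm{Du^\eps}_2$ into the quadratic lower bound --- avoids any appeal to the external lemma and removes that circularity, at the modest cost of the absorption step and of checking that the boundary terms in the summation by parts vanish (they do, since $u^\eps_{-N}=u^\eps_N=0$ by the boundary conditions). Your anticipated obstacle, the bookkeeping of overlapping half-weights of $\phin$ across adjacent blocks, is precisely what the paper's estimates \eqref{eq:bulkCB} and \eqref{eq:defbddbelow} resolve, so the proposal is sound as written.
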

  
  \medskip
  
  The proof of this result relies upon the growth assumptions and estimates
  made in Section \ref{subsec:assumptions}, and is inspired by the argument used
  in the proof of Theorem 4.5 in \cite{BraidesGconv}.
  
  \begin{proof}
  First, we estimate the energy below away from the defect. For ease of reading,
  let
  \begin{align*}
    P_\eps:=\{-N,\ldots,N-1\}\setminus\{-2,-1,0\},
  \end{align*}
  i.e. the set of indices which have only pure interactions with their two 
  neighbours on the right.
  We now use the inequalities from the end of Section \ref{subsec:assumptions}.
  The estimate made in \eqref{eq:pwlowerbnd}, and the $l$-convexity of $W$
  imply that for any $y\in\AN$
  \begin{align}
    \eps\sum_{i\in P_\eps}\smfrac12\phin(\Da1y_i)+\phinn(\Da2y_i)
      +\smfrac12\phin(\Da1y_{i+1})&\geq\eps\sum_{i\in P_\eps}
        \smfrac12W(\Da1y_i)+\smfrac12W(\Da1y_{i+1}),\notag\\
      &\geq\eps\sum_{i\in P_\eps}\smfrac14l\,\Big(\big|\Da1y_i\big|^2
      +\big|\Da1y_{i+1}\big|^2\Big)+C.
      \label{eq:bulkCB}
  \end{align}
  The estimate made in \eqref{eq:defassumbd} then allows us to bound the energy
  coming from bonds near the defect below.
  \begin{multline}
    \smfrac12\phin(\Da1y_{-2})+\psinn(\Da2y_{-2})+\psin(\Da1y_{-1})
      +\phinn(\Da2y_{-1})+\psin(\Da1y_0)\\
    +\psinn(\Da2y_0)+\smfrac12\phin(\Da1y_1)\geq \sum_{i=-2}^0
      \smfrac14l\,\Big(|\Da1y_i|^2+|\Da1y_{i+1}|^2\Big)+C.
      \label{eq:defbddbelow}
  \end{multline}
  Combining these estimates, we have that
  \begin{align}
    \eps\Epure(y^\eps)+\eps\Edef(y^\eps) &\geq \eps \sum_{i=-N}^{N-1}
      \Big(l\,\big|\Da1y_i^\eps\big|^2+C\Big)\notag\\
    &=l\,\norm{Dy^\eps}_2^2+C.\label{eq:bulkbddbelow}
  \end{align}
  Finally, Lemma 5.3 in \cite{BDMG99} implies that
  \begin{align*}
    \eps\Eext(y^\eps)\conv\int_\Om f\big(y-L(x+\smfrac12)\big)\dx,
  \end{align*}
  so it follows that $\eps\Eext(y^\eps)$ is uniformly bounded. Combining this 
  fact with estimate \eqref{eq:bulkbddbelow}, the uniform bound on $\Fe$ implies
  \begin{align*}
    \sup_{\eps>0}\norm{Dy^\eps}_2<+\infty.
  \end{align*}
  The argument is now concluded via the standard result that $y^\eps\wconv y$ in
  $\HH^1$ if and only if $\norm{y^\eps}_{1,2}$ is uniformly bounded and 
  $y^\eps\conv y$ in $\LL^2(\Om)$.
  \end{proof}
  
  \begin{remark}
  \label{rem:0coer}
  Lemma \ref{lem:Lp=>weak} can be interpreted as saying that if the mean energy
  is bounded along some sequence of atomistic deformations, then the 
  interatomic strains do not get too large, since they are compact in the weak
  topology on $\HH^1(\Om)$. This reinforces the notion that we are in an
  elastic regime.
  \end{remark}
  
  \medskip
  Lemma \ref{lem:Lp=>weak} permits us to use the fact that as $W$ is $l$-convex,
  the map
  \begin{align}
    y\longmapsto \int_\Om W(Dy)\dx \label{eq:contbulkenergy}
  \end{align}
  is lower semicontinuous with respect to weak convergence in $\HH^1(\Om)$ (see
  for example Corollary 2.31 in \cite{BraidesGconv}). Fix $\delta>0$. The
  estimates made in \eqref{eq:bulkCB} and \eqref{eq:defbddbelow} imply that
  \begin{align*}
    \eps\Epure(y^\eps)+\eps\Edef(y^\eps)\geq
      \int_{\Om\setminus(-\delta,\delta)}W(Dy^\eps)\dx+C\,\delta.
  \end{align*}
  Using Lemma \ref{lem:Lp=>weak} and the weak lower semicontinuity of
  \eqref{eq:contbulkenergy}, we have that
  \begin{align*}
    \liminf_{\eps\conv0}\big(\eps\Epure(y^\eps)+\eps\Edef(y^\eps)\big)&\geq
      \liminf_{\eps\conv0}\int_{\Om\setminus(-\delta,\delta)}W(Dy^\eps)\dx
      +C\delta,\\
      &\geq\int_{\Om\setminus(-\delta,\delta)}W(Dy)\dx+C\,\delta.
  \end{align*}
  Since $\delta$ was arbitrary, we let $\delta\conv0$, giving
  \begin{align}
     \liminf_{\eps\conv0}\big(\eps\Epure(y^\eps)+\eps\Edef(y^\eps)\big)
       \geq\int_\Om W(Dy)\dx.\label{eq:intenergyconv}
  \end{align}
  
  Finally, convergence of the external force term is a consequence of Lemma 5.3
  in \cite{BDMG99}. This result implies that if $y^\eps\conv y$ in $\LL^2(\Om)$
  and $\norm{Dy^\eps}_2$ is uniformly bounded, then we have that
  \begin{align}
    \eps\Eext(y^\eps)\conv \int_\Om f\big(y-L(x+\smfrac12)\big)\dx,
      \label{eq:extforceconv}
  \end{align}
  and combining \eqref{eq:intenergyconv} and \eqref{eq:extforceconv} proves
  Proposition \ref{prop:0liminf}.
  
  \subsection{The limsup inequality}
  \label{subsec:0limsup}
  Now we have obtained the liminf inequality, we need to prove the limsup 
  inequality to complete the proof of Theorem \ref{thm:0Gconv}.
  
  \begin{proposition}
  \label{prop:0limsup}
  For any $y\in\A$, there exists a sequence of $y^\eps\in\AN$ such that 
  $y^\eps\conv y$ in $\LL^2$ and
  \begin{align*}
    \limsup_{\eps\conv0}\Fe(y^\eps)\leq\Fc(y).
  \end{align*}
  \end{proposition}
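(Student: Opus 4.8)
The strategy is the standard construction of a recovery sequence, preceded by a reduction to a convenient dense class. We may assume $\Fc(y)<+\infty$, since otherwise any sequence $y^\eps\in\AN$ with $y^\eps\conv y$ in $\LL^2$ (for instance the nodal interpolants below) satisfies the inequality trivially. Writing $F''$ for the $\G$-upper limit $F''(y):=\inf\{\limsup_{\eps\conv0}\Fe(y^\eps):y^\eps\conv y\text{ in }\LL^2\}$, I will use the fact that $F''$ is lower semicontinuous with respect to $\LL^2$ convergence. Consequently, it suffices to exhibit, for each $y$ in a suitable dense class $\mathcal{D}\subset\A$, a sequence realising $F''(y)\leq\Fc(y)$, provided every finite-energy $y$ can be approximated in $\LL^2$ by a sequence $y_k\in\mathcal{D}$ with $\Fc(y_k)\conv\Fc(y)$; a diagonal argument then yields the inequality for $y$ itself.

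For the dense class I take $\mathcal{D}:=\{y\in\A:y\text{ is Lipschitz and }m\leq Dy\leq M\text{ a.e. for some }0<m\leq M<\infty\}$. To see that $\mathcal{D}$ is dense in the required sense, fix $y$ with $\Fc(y)<\infty$. Finiteness of the energy together with Assumption (1) forces $g:=Dy>0$ a.e., with $\int_\Om g\dx=L$ and $W(g)\in\LL^1(\Om)$. Truncating, $g_k:=\max\{\smfrac1k,\min\{g,k\}\}$, and then rescaling to restore the constraint $\int_\Om g_k\dx=L$, I define $y_k$ by integration from $-\smfrac12$, so that $y_k\in\mathcal{D}$ and $y_k\conv y$ in $\LL^2$. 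Because $W$ is $l$-convex with $W\conv+\infty$ at both ends of $(0,\infty)$, it is monotone on a neighbourhood of $0$ and of $+\infty$, so $W(g_k)\leq W(g)$ pointwise for large $k$; dominated convergence then gives $\int_\Om W(Dy_k)\dx\conv\int_\Om W(g)\dx$, and the force term is continuous under $\LL^2$ convergence, whence $\Fc(y_k)\conv\Fc(y)$.

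Given $y\in\mathcal{D}$, I define the recovery sequence to be the nodal interpolant, $y^\eps_i:=y(x_i)$, extended piecewise linearly; this lies in $\AN$ since the boundary nodes are fixed, and $y^\eps\conv y$ uniformly, hence in $\LL^2$. The key point is that the discrete slopes stay in the compact set $[m,M]\subset(0,\infty)$ and converge a.e. to $Dy$: indeed $\Da1 y^\eps_i=\smfrac1\eps\int_{x_i}^{x_{i+1}}Dy\dx$ and $\Da2 y^\eps_i=\smfrac12(\Da1 y^\eps_i+\Da1 y^\eps_{i+1})$, both averages of $Dy$ over cells shrinking to a point. The nearest-neighbour sum is exactly an integral, $\eps\sum_i\phin(\Da1 y^\eps_i)=\int_\Om\phin(Dy^\eps)\dx$, while the second-neighbour sum $\eps\sum_i\phinn(\Da2 y^\eps_i)$ is a Riemann sum of a step function converging a.e. to $Dy$; since $\phin,\phinn$ are continuous and bounded on $[m,M]$, dominated convergence yields $\eps\Epure(y^\eps)\conv\int_\Om W(Dy)\dx$.

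It remains to treat the two lower-order contributions. The defect term $\eps\Edef(y^\eps)$ consists of a fixed finite number of differences of potentials evaluated at slopes lying in $[m,M]$, hence is uniformly bounded; multiplied by $\eps$ it vanishes as $\eps\conv0$. The external force term converges to $\int_\Om f(y-L(x+\smfrac12))\dx$ by Lemma 5.3 in \cite{BDMG99}, exactly as in the liminf proof. Summing the three limits gives $\Fe(y^\eps)\conv\Fc(y)$, so in fact $\limsup_{\eps\conv0}\Fe(y^\eps)=\Fc(y)$ for $y\in\mathcal{D}$, which is more than enough. I expect the genuine work to be concentrated in the density step: verifying that finite-energy deformations can be approximated by elements of $\mathcal{D}$ with convergence of $\Fc$ requires care because $\phin$ blows up as the bond length tends to $0$, so the truncation and rescaling must be controlled using the monotonicity and $l$-convexity of $W$ to secure the dominating bound; the lattice construction and the convergence of each energy piece are then routine.
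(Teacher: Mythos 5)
Your strategy is sound and reaches the statement by a genuinely different route from the paper. You reduce, via lower semicontinuity of the $\G$-limsup and a density-in-energy argument, to the class $\mathcal{D}$ of deformations with $0<m\leq Dy\leq M$, for which the nodal interpolant is a recovery sequence for all three energy pieces at once: the discrete slopes live in the compact set $[m,M]$, so the pure energy converges by dominated convergence, the defect energy is uniformly bounded so that $\eps\Edef(y^\eps)\conv0$ outright, and the force term is Lemma 5.3 of \cite{BDMG99}. The paper does no density step at all: it interpolates a general finite-energy $y$ directly and obtains $\limsup_\eps \eps\Epure(\Te y)\leq\int_\Om W(Dy)\dx$ by dominating the integrand $W_\eps$ pointwise by $3A\,\big(W(Dy)\star\rho_\eps\big)+C$ -- Jensen's inequality (convexity of $W$) plus Assumption (4) to absorb the stray $\phin$ and $\phinn$ terms -- and then applying Fatou's Lemma to $g_\eps-W_\eps$. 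The only modification of $y$ in the paper is a \emph{local} linearisation $y^\eta$ on $(-\eta,\eta)$, needed precisely because for general finite-energy $y$ the slopes $\Da1y_{-2},\ldots,\Da1y_1$ at the defect are uncontrolled (e.g.\ $\eps\,\psin(\Da1y_0)$ need not vanish, since no growth rate is assumed on $\psin$ near $0$), followed by diagonalisation in $\eta$ using $\Fc(y^\eta)\conv\Fc(y)$, cf.\ \eqref{eq:lindefconv0}. So the trade is: the paper perturbs $y$ only near the defect and handles the bulk for arbitrary $y$ via the convolution-domination trick; you perturb $y$ globally by truncation and in exchange make both the bulk and the defect estimates elementary. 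Both are legitimate; yours is the more standard $\G$-convergence template, while the paper's exhibits how the specific convexity/concavity/domination structure of the Confined Lennard-Jones model removes the need for any approximation of the bulk term.

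One step of your density argument needs repair, exactly where you anticipated. You prove the pointwise domination $W(g_k)\leq W(g)$ for the truncation $g_k$, but you then apply dominated convergence to $W(Dy_k)$ where $Dy_k=\lambda_k g_k$ is the \emph{rescaled} truncation. Since no growth rate is imposed on $\phin$ (hence on $W$) at $0$ or $+\infty$, the bound does not survive the rescaling near the truncation levels even though $\lambda_k\conv1$: if, say, $W(t)\approx e^{1/t}$ near $0$, then $W(\lambda t)/W(t)$ is unbounded as $t\searrow0$ uniformly in $\lambda<1$, so $W(\lambda_k g_k)$ on $\{g<1/k\}$ admits no integrable majorant derived from $W(g)$. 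A clean fix is to restore the mass constraint additively rather than multiplicatively: pick a set $A\subset\Om$ of positive measure on which $g$ takes values in a fixed compact subinterval of $(0,\infty)$ (such a set exists since $Dy>0$ a.e.), and set $Dy_k:=g_k+c_k\chi_A$ with $c_k:=\big(L-\int_\Om g_k\dx\big)/|A|\conv0$; the correction only moves values within a fixed compact set where $W$ is bounded and continuous, so dominated convergence applies with majorant $W(g)+C$ and $\Fc(y_k)\conv\Fc(y)$ follows. Finally, your monotonicity-near-$0$ claim uses $W(t)\conv+\infty$ as $t\searrow0$; this is not automatic from $W=\phin+\phinn$, since the concave $\phinn$ may tend to $-\infty$ there (e.g.\ $-1/t$ is concave), but it does follow from Assumption (4), which gives $W\geq(1-\alpha)\phin+C$. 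With these two points patched, your proof is complete.
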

  
  \medskip
  The construction of the sequence $y^\eps$ requires a diagonal argument which
  is similar in flavour to that employed in the proof of Theorem 4.5 in
  \cite{BraidesGconv}. This argument proceeds in two steps. In the first step,
  the convexity of $W$ is exploited to show that a naive approximation of
  $y\in\A$ by $\Te y$ works for the `pure' part of the energy. By linearising
  deformations near the defect, and therefore controlling the behaviour of the
  energy there, we can take a diagonal sequence to arrive at the correct
  inequality.
  Note that the inequality is trivial if $\Fc(y)=+\infty$, so we only need
  consider $y\in\A$ such that $\Fc(y)<+\infty$.
  
  Fix $y\in\A$, and define the integrand
  \begin{align*}
    W_\eps(x):=\sum_{i=-N}^{N-1}\Big(\smfrac12\phinn(\Da2y_{i-1})+
      \phin(\Da1y_i)+\smfrac12\phinn(\Da2y_i)\Big)\cdot\chi_i(x),
  \end{align*}
  where $\chi_i(x)$ is the indicator function for the interval $(x_i,x_{i+1})$.
  Note that by construction,
  \begin{align*}
    \int_\Om W_\eps(x)\dx=\eps\Epure(y).
  \end{align*}
  We will apply Fatou's lemma to the functions $W_\eps$.

  \subsubsection*{Almost everywhere convergence of $W_\eps$}
  For any $x\in\Om$, define the sequence $i_\eps:=\lfloor Nx\rfloor\in\Z$.
  Then 
  \begin{align*}
    x_\eps:=\eps i_\eps\conv x
  \end{align*}
  as $\eps\conv0$, and applying Lebesgue's Differentiation Theorem  (see for
  example Corollary 2 in Section 1.7 of \cite{EvansGariepy}) gives that for
  almost every $x\in\Om$,
  \begin{align*}
    D_jy_{i_\eps}=\avint_{x_\eps}^{x_\eps+j\eps}
      Dy(t)\dt\conv Dy(x)
  \end{align*}
  as $\eps\conv0$.
  An immediate consequence of this and the continuity of the potentials $\phi_i$
  is that
  \begin{align*}
    W_\eps(x)\conv W\big(Dy(x)\big)
  \end{align*}
  for almost every $x\in\Om$.
  
  \subsubsection*{Pointwise upper bound on $W_\eps$}
  Fix a point $x\in\Om$ and the sequence $i_\eps$ as above. Dropping the
  subscript, we estimate
  \begin{align*}
    W_\eps(x) &\leq\smfrac12\phinn(\Da2y_{i-1})
      +\smfrac12\phin(\Da2y_{i-1})+\phin(\Da1y_i)
    +\smfrac12\phin(\Da2y_i)+\smfrac12\phinn(\Da2y_i)+C,\notag\\
    &= \smfrac12W(\Da2y_{i-1})+\phin(\Da1y_i)+\smfrac12W(\Da2y_i)+C,
  \end{align*}
  where $-C\in\R$ is a lower bound for $\phin$.
  Next, Assumption (4) in Section \ref{subsec:assumptions} implies that for some
  $C\in\R$,
  \begin{align*}
    \smfrac{1}{1-\alpha}W(t)+C\geq \phin(t).
  \end{align*}
  Hence, letting $A:=\max\{\smfrac12,\smfrac1{1-\alpha}\}$,
  \begin{align*}
    W_\eps(x)&\leq A\,\Big(W(\Da2y_{i-1})+W(\Da1y_i)+W(\Da2y_i)\Big)+C,\\
    &\leq A\,\bigg(\,\avint_{x_{i-1}}^{x_{i+1}}W(Dy)\dx
      +\avint_{x_i}^{x_{i+1}}W(Dy)\dx
      +\avint_{x_{i}}^{x_{i+2}}W(Dy)\dx\bigg)+C
  \end{align*}
  by using Jensen's inequality. Since $W$ is bounded below, we can extend the 
  domain of integration in each interval, possibly changing the constant $C$, 
  to reach the upper bound
  \begin{align*}
    W_\eps(x)\leq g_\eps(x):=
      3A\,\avint_{x-2\eps}^{x+2\eps}W(Dy)\dt+C.
  \end{align*}
  
  \subsubsection*{Convergence of $g_\eps$}
  As $\Fc(y)$ is bounded, $W(Dy)$ is integrable and so Lebesgue's
  Differentiation theorem implies that 
  \begin{align*}
    g_\eps(x)\conv g(x):= 3A\,W(Dy(x)) +C
  \end{align*}
  almost everywhere as $\eps\conv0$. Furthermore, $g_\eps$ is in fact the
  convolution
  \begin{align*}
    g_\eps = g\star\rho_\eps,
  \end{align*}
  where $\rho_\eps$ is an approximation to a Dirac mass given by
  \begin{align*}
    \rho_\eps(x):=\frac{1}{4\eps}\rho\bigg(\frac{x}{4\eps}\bigg),\quad
    \text{where}\quad\rho(x)=\chi_{\Om}(x).
  \end{align*}
  The functions $g_\eps$ are in $\LL^1(\Om)$ because $g\in\LL^1(\Om)$ and
  $\rho_\eps\in\LL^\infty(\Om)$, and by standard arguments
  \begin{align*}
    \int_\Om g_\eps(x)\dx\conv \int_\Om g(x)\dx
  \end{align*}
  as $\eps\conv0$. We can now apply Fatou's Lemma to $g_\eps-W_\eps$, which is
  positive, measurable, and converges almost everywhere as in  to get
  \begin{align*}
    \int_\Om g\dx -\limsup_{\eps\conv0}\int_\Om W_\eps\dx
      =\liminf_{\eps\conv0}\int_\Om g_\eps-W_\eps\dx\geq\int_\Om g-W(Dy)\dx.
  \end{align*}
  A rearrangement of this inequality allows us to conclude that
  \begin{align}
    \limsup_{N\conv\infty}\eps\Epure(y^\eps)=\limsup_{N\conv\infty}\int_\Om
      W_\eps\dx\leq \int_\Om W(Dy)\dx. \label{eq:purelimsup}
  \end{align}
  
  \subsubsection*{Controlling the energy near the defect}
  For any $y\in\A$ and $\eta>0$,
  let $y^\eta$ be a linearisation close to $0$ of $y\in\A$ given by
  \begin{align*}
    y^\eta(x):=\begin{cases}
      y(-\eta)+\frac{y(\eta)-y(-\eta)}{2\eta}(x+\eta)&\text{when }|x|<\eta,\\
      y(x) &\text{otherwise}.
    \end{cases}
  \end{align*}
  Let $F^\eta:=Dy^\eta(0)$. For $\eps$ sufficiently small, the defect energy
  for $\Te y^\eta$ is:
  \begin{align}
    \eps\Edef(\Te y^{\eta}) &= 2\eps\big(\psinn(F^\eta)-\phinn(F^\eta)
      +\psin(F^\eta)-\phin(F^\eta)\big)\leq C(\eta)\eps\label{eq:lindefconv0}
  \end{align}
  with $\eta$ fixed. To control $\eps\Eext$, we can once again employ the
  estimate that was proven in \eqref{eq:extforceconv}, since the argument used
  was for a more general sequence than that chosen here. Therefore, combining
  \eqref{eq:extforceconv}, \eqref{eq:purelimsup} and \eqref{eq:lindefconv0}, we
  deduce that
  \begin{align*}
    \limsup_{\eps\conv0}\Fe(\Te y^{\eta})\leq \Fc(y^\eta).
  \end{align*}
  Since $y^\eta\conv y$ in $\LL^2$ as $\eta\conv0$, we would like to show that
  $\Fc(y^\eta)\conv\Fc(y)$ as $\eta\conv0$ in order to use a diagonalisation
  argument. This follows from the observation that
  \begin{align*}
    2\eta\cdot C\leq\int_{-\eta}^\eta W(Dy^\eta)\dx \leq \int_{-\eta}^\eta 
      W(Dy)\dx,
  \end{align*}
  since $W$ is bounded below and convex. Both sides tend to 0 as $\eta\conv0$,
  so we can deduce that
  \begin{align*}
    \Fc(y^\eta) & = \int_\Om W(Dy^\eta) + f\big(y^\eta-L(x+\smfrac12)\big)\dx,\\
    &\leq\int_{\Om\setminus(-\eta,\eta)} W(Dy)\dx+\int_{-\eta}^{\eta}W(Dy^\eta)
      +\int_\Om f\,u\dx+\norm{f}_2\norm{y^\eta-y}_2,\\
    &\conv\Fc(y)
  \end{align*}
  as $\eta\conv0$, recalling that $u=y-L(x+\smfrac12)$.
  
  \subsubsection*{Conclusion of the argument}
  Finally, by taking a
  diagonal sequence from the collection of $\Te y^\eta$, there exists
  $\Te y^{\eta_\eps}\conv y$ in $\LL^2$, along which
  \begin{align*}
    \limsup_{\eps\conv0}\Fe(\Te y^{\eta_\eps})\leq\Fc(y),
  \end{align*}
  proving Proposition \ref{prop:0limsup}, and therefore concluding the proof
  of Theorem \ref{thm:0Gconv}.
  
  \begin{remark}
  The defect does not introduce a perturbation to the $\G$-limit at
  this order -- see Theorem 3.2 in \cite{BraiCic:2007} for the $\G$-limit of
  this problem without a defect. This is to be expected, since the `defect set'
  is null in the limit as $\eps\conv0$, and it therefore becomes reasonable to
  ask whether there is a higher order change in the energy, which is the
  subject of the subsequent analysis.
  \end{remark}

  \section{Properties of $\Fc$}
  \label{sec:props.Ec.min}
  The functional $\Fc$ is of a well-studied form, and the analysis of the 
  minimum problem is classical. The following theorem collects relevant results
  regarding the functional and its minimisers which we will invoke in the
  following sections.
  
  \begin{proposition}[Properties of $0^\text{th}$-order limit]
  The problem
  \begin{align*}
   \argmin_{y\in \A}\Fc(y)
  \end{align*}
  has a unique solution $\bar{y}$, which has the following properties:
  \begin{enumerate}
    \item $\bar{y}$ satisfies the Euler--Lagrange Equations for this
      problem,
    \item $\bar{y}\in\CC^2(\Om)$.
  \end{enumerate}
  \end{proposition}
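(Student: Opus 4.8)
The plan is to treat this as a standard one-dimensional strictly convex variational problem and extract the stated properties in the order existence, uniqueness, Euler--Lagrange equation, and finally regularity. Existence follows from the direct method. Coercivity is a consequence of the $l$-convexity of $W$: the inequality $W''\geq l$ yields a quadratic lower bound $W(t)\geq\smfrac12 l\,t^2-C(1+|t|)$ on $(0,\infty)$, so $\int_\Om W(Dy)\dx$ controls $\norm{Dy}_2^2$ from below, while the force term $\int_\Om f\big(y-L(x+\smfrac12)\big)\dx$ is only linear in $\norm{Dy}_2$ after a Poincaré estimate (the displacement $y-L(x+\smfrac12)$ vanishes at $\pm\smfrac12$) and is therefore dominated. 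Minimising sequences are thus bounded in $\HH^1$, so a subsequence converges weakly; weak lower semicontinuity of $y\mapsto\int_\Om W(Dy)\dx$ is exactly the convexity statement already invoked in Section~\ref{subsec:0liminf} (Corollary~2.31 in \cite{BraidesGconv}), the force term is weakly continuous, and $\A$ is a closed affine subspace of $\HH^1(\Om)$, hence weakly closed. A minimiser $\bar y$ therefore exists, and it is unique because $W''\geq l>0$ makes $\Fc$ strictly convex on the convex set $\A$.

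For the Euler--Lagrange equation I would avoid differentiating $W$ directly, since $W$ blows up as its argument tends to $0$ and a naive first variation is not legitimate there. Instead I would first observe that $\Fc(\bar y)<+\infty$ together with assumption~(1) (which makes $W=+\infty$ for non-positive arguments) forces $D\bar y>0$ almost everywhere, and then apply the du~Bois-Reymond form of the first-order condition for the minimiser. This produces a constant $\Sigma\in\R$ for which
\begin{align*}
  W'(D\bar y(x))=\sigma(x)+\Sigma\qquad\text{with}\qquad
  \sigma(x)=\int_{-1/2}^x f(t)\dt
\end{align*}
holds almost everywhere, which is precisely the integrated Euler--Lagrange equation asserted in~(1); the constant $\Sigma$ is pinned down uniquely by the constraint $\int_\Om D\bar y\dx=L$, using that the right-hand side is increasing in $\Sigma$.

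Regularity then follows by inverting $W'$. Since $W\in\CC^2((0,\infty))$ with $W''\geq l>0$, the map $W':(0,\infty)\conv\R$ is a strictly increasing $\CC^1$ diffeomorphism onto its image, and $W'(t)\conv-\infty$ as $t\searrow0$ because $W$ is convex and blows up at $0$. As $\sigma+\Sigma$ is continuous and bounded on the compact interval $\Om$, its values remain in a compact subset of the range of $W'$ bounded away from the singular end, so that
\begin{align*}
  D\bar y=(W')^{-1}(\sigma+\Sigma)
\end{align*}
is continuous and bounded below by some $\delta>0$. The inverse function theorem gives $(W')^{-1}\in\CC^1$, and since $f\in\CC^2(\Om)$ makes $\sigma\in\CC^1(\Om)$, we conclude $D\bar y\in\CC^1(\Om)$ and hence $\bar y\in\CC^2(\Om)$, establishing~(2).

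The main obstacle throughout is the singular behaviour of $W$ near the origin: it must be controlled both qualitatively, to justify passing to the Euler--Lagrange equation via the a.e.\ positivity $D\bar y>0$, and quantitatively, to keep $D\bar y$ uniformly away from the singularity so that $\CC^2$ regularity can be recovered. The du~Bois-Reymond formulation, combined with the strict monotonicity and the blow-up of $W'$ at $0$, is what resolves both difficulties cleanly without ever having to differentiate $W$ near the problematic end of its domain.
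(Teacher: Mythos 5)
Your existence step (direct method with the quadratic lower bound from $l$-convexity), your uniqueness step (strict convexity of $\Fc$ on the affine set $\A$), and your regularity step (inverting the $\CC^1$ diffeomorphism $W'$, using $f\in\CC^2$ and the boundedness of $\sigma+\Sigma$ away from the singular end of the range) are all sound, and the regularity argument coincides with the paper's. The genuine gap is at the Euler--Lagrange step. Knowing only that $D\bar y>0$ almost everywhere is not sufficient licence to ``apply the du~Bois-Reymond form of the first-order condition''. To reach the weak form $\int_\Om[W'(D\bar y)-\sigma]Dv\dx=0$ you must differentiate $t\mapsto\Fc(\bar y+tv)$ at $t=0$ under the integral, and with $\operatorname{ess\,inf}D\bar y$ possibly equal to zero this fails in two concrete ways. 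First, since $W'(t)\conv-\infty$ as $t\searrow0$ (the very blow-up you invoke), $W'(D\bar y)$ need not be locally integrable, so the candidate weak form need not even make sense; the du~Bois-Reymond lemma presupposes $W'(D\bar y)-\sigma\in\LL^1_{\mathrm{loc}}$. Second, for a variation $v$ with $Dv<0$ somewhere, the competitor $\bar y+tv$ may have $D(\bar y+tv)\leq0$ on a set of positive measure for \emph{every} $t>0$, whence $\Fc(\bar y+tv)=+\infty$ and the variation yields at best a one-sided inequality. This is exactly the point the paper flags: it remarks that deriving \eqref{eq:ELeqn} is easy only ``if we suppose for the moment'' that $\bar y\in\WW^{1,\infty}(\Om)$ with $D\bar y\geq\delta>0$ a.e.\ --- a priori information your argument has not established at the moment you invoke the first variation.

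The paper sidesteps this by running the logic in the opposite direction: it \emph{defines} $\bar y(x):=\int_{-1/2}^x(W')^{-1}(\sigma(t)+\Sigma)\dt$, with $\Sigma$ fixed by the implicit equation $\int_\Om(W')^{-1}(\sigma+\Sigma)\dt=L$ (solvable since the left-hand side is increasing in $\Sigma$ and sweeps out $(0,\infty)$) --- the very formula you arrive at --- and then verifies via the $l$-convexity inequality
\begin{align*}
  \Fc(\tilde y)-\Fc(\bar y)\geq\int_\Om\big[W'(D\bar y)-\sigma-\Sigma\big]
  \big(D\tilde y-D\bar y\big)\dx+\smfrac12 l\,\norm{D\tilde y-D\bar y}_2^2
\end{align*}
that this explicit $\CC^2$ function is the unique global minimiser: the integrand vanishes by construction, so existence, uniqueness, the Euler--Lagrange equation and regularity all follow at once, without ever taking a first variation at an unknown minimiser. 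To salvage your forward route you would need to first prove an a priori bound $D\bar y\geq\delta>0$ (e.g.\ by a barrier or mass-rearrangement argument exploiting $W'(t)\conv-\infty$ as $t\searrow0$), or cite a theorem on the validity of the du~Bois-Reymond equation for jointly convex problems of Bolza; alternatively, simply reverse the order of the argument as the paper does.
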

  
  \begin{proof}
  The existence part of this proof is completely classical, and can be found in
  \cite{Dacorogna08} for example. If we suppose for the moment that minimisers
  are in $\WW^{1,\infty}(\Om)$ and satisfy the condition
  \begin{align*}
    D\bar{y}(x)\geq\delta>0
  \end{align*}
  for almost every $x\in\Om$, it is also easy to show that they satisfy
  \begin{align}
    \int_\Om \big[W^\prime\big(D\bar{y}\big)-\sigma\big]Dv\dx=0\qquad\forall\,
    v\in \WW^{1,\infty}_0(\Om)\label{eq:ELeqn},
  \end{align}
  where $\sigma(x):=\int_0^xf(t)\dt$. Since $W$ is $l$-convex,
  $W^\prime:\R^+\conv\R$ is strictly increasing and is a $\CC^1$
  diffeomorphism. If $(W^\prime)^{-1}$ is the inverse of $W^\prime$, it is
  possible to `explicitly' define a solution of the Euler-Lagrange equations
  \begin{align*}
    \bar{y}(x):=\int_{-\smfrac12}^x(W^\prime)^{-1}\big(\sigma(t)+
     \Sigma\big)\dt,
  \end{align*}
  where $\Sigma$ is the solution of the following implicit
  equation:
  \begin{align*}
      \int_{\Om}(W^\prime)^{-1}\big(\sigma(t)+\Sigma\big)\dt=L.
  \end{align*}
  We can show that this equation has a solution by regarding the left hand side
  as a function of $\Sigma$, showing it is $\CC^1$, has a strictly positive
  derivative, and tends to $0$ as $\Sigma\conv-\infty$, so attains all possible
  values $L>0$ only once. It is now simple to
  verify that $\bar{y}$ satisfies the Euler--Lagrange equation pointwise and is
  $\CC^2$, so all that remains to do is show that this is in fact the
  minimiser. Suppose that $\tilde{y}\in\A$ minimises $\Fc$ and is not equal to
  $\bar{y}$; then
  \begin{align*}
    0 &\geq \int_\Om W(D\tilde{y})-W(D\bar{y}) -f(\tilde{y}-\bar{y})\dx,\\
      &=\int_\Om W(D\tilde{y})-W(D\bar{y}) +\sigma\big(D\tilde{y}-D\bar{y}\big
        )\dx,\\
      &\geq \int_\Om \big[W^\prime(D\bar{y}) -\sigma-\Sigma\big]
        \cdot\big(D\tilde{y}-D\bar{y}\big)\dx
        +\smfrac12l\norm{D\tilde{y}-D\bar{y}}_2^2,
  \end{align*}
  where we have integrated by parts on the second line, and used the fact that
  $W$ is $l$-convex on the last line. Since $\bar{y}$ has been constructed to
  solve the Euler--Lagrange equation pointwise, the integrand vanishes, and 
  hence $\tilde{y}=\bar{y}$. This argument clearly also implies uniqueness of
  solutions.
  \end{proof}
  
  \section{The $1^\text{st}$-order $\G$-limit}
  \label{sec:1}
  The approach taken in Section \ref{subsec:0limsup} gives a strong indication
  of the scaling of the next term in an asymptotic expansion of the energy:
  \eqref{eq:lindefconv0} suggests that the extra energy from the defect is only 
  coming from a set near the defect that is of size $O(\eps)$. Section 
  \ref{sec:props.Ec.min} shows that we have a very clear understanding of the
  properties of $\bar{y}$, and thus we can reasonably hope to derive a good
  characterisation of the next order limit, as in \cite{SSZ:2011,BraiCic:2007}.
  
  For this purpose, we define some additional notation. Recalling from Section
  \ref{sec:props.Ec.min} that
  \begin{align*}
    \bar{y}=\argmin_{y\in\A}\Fc(y),
  \end{align*}
  the functional from which we obtain the first-order limit is
  \begin{align*}
    \Fonee(y):=\frac{\Fe(y)-\Fc(\bar{y})}{\eps}.
  \end{align*}
  To make the notation used in this section more concise, we let 
  $F_0:=D\bar{y}(0)$ as in Section \ref{subsec:formal}, and define potentials
  \begin{align*}
    \Phin(t)&:=\phin(F_0+t)-\phin(F_0)-\phin^\prime(F_0)\cdot t,\\
    \Psin(t)&:=\psin(F_0+t)-\phin(F_0)-\phin^\prime(F_0)\cdot t,\\
    \Phinn(t)&:=\phinn(F_0+t)-\phinn(F_0)-\phinn^\prime(F_0)\cdot t,\\
    \Psinn(t)&:=\psinn(F_0+t)-\phinn(F_0)-\phinn^\prime(F_0)\cdot t.
  \end{align*}
  We will show that the first-order $\G$-limit can be written in terms of the
  infinite cell problem
  \begin{align*}
    \inf_{r\in\ell^2(\Z)}\Einf(r),
  \end{align*}
  where $\Einf:\ell^2(\Z)\conv\R\cup\{+\infty\}$ is defined to be
  \begin{align*}
    \Einf(r):=\sum_{i=-\infty}^\infty\Phin(r_i)
      +\Phinn(\smfrac{r_{i+1}+r_i}{2}\big)+\Edefone(r),
  \end{align*}
  and we have set
  \begin{multline*}
    \Edefone(r):=\psinn\big(F_0+\smfrac{r_{-2}+r_{-1}}{2}\big)
    -\phinn\big(F_0+\smfrac{r_{-2}+r_{-1}}{2}\big)+\psin\big(F_0+r_{-1}\big)
    -\phin\big(F_0+r_{-1}\big)\\+\psin\big(F_0+r_0)-\phin\big(F_0+r_0\big)
    +\psinn\big(F_0+\smfrac{r_0+r_1}{2})-\phinn\big(F_0+\smfrac{r_0+r_1}{2}
    \big).
  \end{multline*}
  The second main result of this paper is the following theorem.
  
  \begin{theorem}[$1^\text{st}$-order $\G$-limit]
  \label{thm:1Gconv}
  With respect to convergence in $\LL^2$, we have that
  \begin{align*}
    \Glim_{\eps\conv0}\Fonee(y) = \Fone(y) :=\begin{cases}
      \displaystyle \inf_{r\in\ell^2(\Z)}\Einf(r) &y=\bar{y},\\
      +\infty &y\neq\bar{y}.
    \end{cases}
  \end{align*}
  \end{theorem}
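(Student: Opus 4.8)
\emph{Plan.} I would separate the two cases in the definition of $\Fone$. When $y\neq\bar y$ the result is an immediate corollary of the $0^{\text{th}}$-order theory: the limsup inequality is vacuous because $\Fone(y)=+\infty$, while Proposition \ref{prop:0liminf} gives $\liminf_\eps\Fe(y^\eps)\geq\Fc(y)$, and the uniqueness of the minimiser $\bar y$ yields $\Fc(y)-\Fc(\bar y)=:c>0$; hence $\Fonee(y^\eps)=\big(\Fe(y^\eps)-\Fc(\bar y)\big)/\eps\geq(c+o(1))/\eps\conv+\infty$. All the substance lies in the case $y=\bar y$. Writing $\bar y^\eps:=\Te\bar y$ for the interpolant of the minimiser and introducing the discrete strain perturbation
\[
  r^\eps_i:=\Da1y^\eps_i-\Da1\bar y^\eps_i,\qquad\text{so that}\qquad \Da2y^\eps_i=\Da2\bar y^\eps_i+\smfrac{r^\eps_i+r^\eps_{i+1}}{2},
\]
I would exploit the purely algebraic splitting
\[
  \Fonee(y^\eps)=\frac{\eps\Ee(\bar y^\eps)-\Fc(\bar y)}{\eps}+\big(\Ee(y^\eps)-\Ee(\bar y^\eps)\big),
\]
in which the first summand is a consistency term and the second carries the dependence on the perturbation.

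For the liminf inequality, assume $\Fonee(y^\eps)$ is bounded (otherwise there is nothing to prove). Arguing as in Lemma \ref{lem:Lp=>weak}, but now comparing $y^\eps$ to $\bar y$ and using the $l$-convexity of $W$ together with the Euler--Lagrange identity \eqref{eq:ELeqn}, I expect the bound $\smfrac{l}{2}\norm{r^\eps}_{\ell^2}^2\leq C$, so that along a subsequence $r^\eps\wconv r$ in $\ell^2(\Z)$ and in particular $r^\eps_i\conv r_i$ for every fixed $i$. The defect part of $\Ee(y^\eps)-\Ee(\bar y^\eps)$ involves only the indices $\{-2,-1,0,1\}$ and, since $x_i=i\eps\conv0$ forces $\Da1\bar y^\eps_i,\Da2\bar y^\eps_i\conv F_0$ there, converges to $\Edefone(r)-\Edefone(0)$ by continuity. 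The bulk elastic part is $\sum_i\big[\Phi_1^{(i)}(r^\eps_i)+\Phi_2^{(i)}(\smfrac{r^\eps_i+r^\eps_{i+1}}{2})\big]$, where $\Phi_1^{(i)},\Phi_2^{(i)}$ denote $\phin,\phinn$ recentred at the slowly varying reference strains $\Da1\bar y^\eps_i,\Da2\bar y^\eps_i$. The key observation is that atoms of bounded index sit at vanishing physical distance from the defect, so on any fixed window $|i|\leq M$ these recentred potentials converge to the $F_0$-based $\Phin,\Phinn$. Applying \eqref{eq:pwlowerbnd} cellwise shows each cell energy is non-negative, so the tail $|i|>M$ is discarded for a lower bound; letting first $\eps\conv0$ and then $M\conv\infty$ yields $\liminf_\eps(\text{bulk})\geq\sum_i[\Phin(r_i)+\Phinn(\smfrac{r_i+r_{i+1}}{2})]$. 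Together with the defect limit and the consistency analysis below this gives $\liminf_\eps\Fonee(y^\eps)\geq\Einf(r)\geq\inf_{r\in\ell^2(\Z)}\Einf(r)$.

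The heart of the matter, and the step I expect to be hardest, is controlling the linear and consistency contributions so that they leave exactly $\Einf$; this is the rigorous counterpart of the formal computation of Section \ref{subsec:formal}. First, the consistency term $\big(\eps\Ee(\bar y^\eps)-\Fc(\bar y)\big)/\eps$ must converge to $\Edefone(0)$: the defect bonds of $\bar y^\eps$ contribute $\Edefone(0)$ in the limit, while the internal part produces a discrete Cauchy--Born discrepancy $\sum_i[\phinn(\Da2\bar y^\eps_i)-\phinn(\Da1\bar y^\eps_i)]$ which, by a Riemann-sum argument using $\bar y\in\CC^2(\Om)$, telescopes to a boundary term $\smfrac12\big[\phinn(D\bar y)\big]_{-1/2}^{1/2}$ that vanishes once the periodicity of the reference chain is taken into account. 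Second, the linear-in-$r$ terms $\sum_i\big[\phin^\prime(\Da1\bar y^\eps_i)r^\eps_i+\phinn^\prime(\Da2\bar y^\eps_i)\smfrac{r^\eps_i+r^\eps_{i+1}}{2}\big]$ from the Taylor expansion must cancel against the rescaled force term. After summation by parts (using $\sum_i f_iu_i=\sum_i\sigma^\eps_i\,\Da1u_i$ with $u=y^\eps-\bar y^\eps$) and substitution of the discrete Euler--Lagrange relation $W^\prime(\Da1\bar y^\eps_i)\simeq\sigma^\eps_i+\Sigma$, these combine into $\sum_i r^\eps_i\big[W^\prime(\Da1\bar y^\eps_i)-\sigma^\eps_i-\Sigma\big]$; the $\Sigma$-term drops since $\sum_i r^\eps_i=\eps^{-1}(u_N-u_{-N})=0$, and the remaining $O(\eps)$ residual is dominated via Cauchy--Schwarz by $O(\eps)\sqrt{2N}\,\norm{r^\eps}_{\ell^2}=O(\eps^{1/2})\conv0$. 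The regularity $\bar y\in\CC^2(\Om)$ and $f\in\CC^2(\Om)$ from Assumption (6) are precisely what make these remainders vanish.

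For the limsup inequality the same decomposition is run in reverse. Given $r\in\ell^2(\Z)$ with $\Einf(r)$ close to the infimum, I would first truncate it to a compactly supported $r^{(M)}$; since the potentials grow quadratically and $r\in\ell^2$, $\Einf(r^{(M)})\conv\Einf(r)$. I would then build $y^\eps\in\AN$ by prescribing $\Da1y^\eps_i:=\Da1\bar y^\eps_i+r^{(M)}_i$ near the defect and absorbing the resulting $O(\eps)$ net displacement into a uniform $O(\eps)$ strain over the bulk to restore the boundary data; by the Euler--Lagrange relation \eqref{eq:ELeqn} this critical correction affects $\eps\Ee(y^\eps)-\Fc(\bar y)$ only at order $\eps^2$, hence not in the limit. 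Because $r^{(M)}$ has finite support there are no weak-convergence subtleties, the computations above become exact identities, and $\Fonee(y^\eps)\conv\Einf(r^{(M)})$; a diagonal argument over $M$ then produces a recovery sequence attaining $\inf_{r}\Einf(r)$. Throughout, it is the convex/concave structure of the Confined Lennard-Jones model---via \eqref{eq:pwlowerbnd} and the $l$-convexity of $W$---that simultaneously provides $\ell^2$-coercivity of $r^\eps$, non-negativity of the cell energies, and hence the weak lower semicontinuity driving the liminf inequality, so that no homogenisation formula is required.
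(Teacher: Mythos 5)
Your proposal follows essentially the same architecture as the paper's proof: the same discrete strain perturbation $r^\eps_i=\Da1y^\eps_i-\Da1\bar{y}_i$ (the paper's operator $\Pe$), the same coercivity mechanism ($l$-convexity of $W$ combined with the Euler--Lagrange identity \eqref{eq:ELeqn} to get $\norm{r^\eps}_{\ell^2}\leq C$, which is the content of Lemmas \ref{lem:E1bdd=>strong} and \ref{lem:l2.coercivity}), the same Fatou/fixed-window passage to the liminf with recentred potentials converging to the $F_0$-based $\Phin,\Phinn$, and the same $O(\eps^{1/2})$ disposal of the force residual after summation by parts. The genuine differences are tactical, and both are interesting. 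First, you compare $y^\eps$ with the interpolant $\Te\bar{y}$ and treat the consistency term $\big(\eps\Ee(\Te\bar{y})-\Fc(\bar{y})\big)/\eps$ globally, whereas the paper subtracts the continuum energy \emph{cellwise} inside the quantities $s_i^\eps$ of \eqref{eq:sidef}, so that no global telescoping is ever needed and every error is localised to a single cell. Second, in the limsup you truncate a near-minimiser of $\Einf$ to compact support and restore the boundary condition by a uniform $O(\eps)$ strain whose cost, after division by $\eps$, is $O(\eps)$ because \eqref{eq:ELeqn} kills the first variation; since $\Phin$ and $\Phinn$ vanish to second order at $0$, the continuity $\Einf(r^{(M)})\conv\Einf(r)$ does hold for $r\in\ell^2(\Z)$, so this is legitimate and lets you bypass the paper's existence, monotonicity and exponential-decay analysis (Proposition \ref{prop:Einf.existence} and the results following it), which the paper needs because its correction $\bar{r}^\delta$ is controlled via $\norm{r}_{\ell^1(\Z)}$. (Those decay results are not wasted in the paper, though: they carry its main qualitative conclusion about the thin boundary layer.)

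One step of yours would not survive scrutiny as written: the claim that the Cauchy--Born discrepancy in the consistency term ``telescopes to a boundary term that vanishes by periodicity.'' Writing $a_i:=\Da1\bar{y}_i$, the sum $\sum_i\big[\phinn\big(\smfrac{a_i+a_{i+1}}{2}\big)-\phinn(a_i)\big]$ equals the exact telescope $\smfrac12\sum_i\big[\phinn(a_{i+1})-\phinn(a_i)\big]$, which does vanish under the periodic extension $a_N=a_{-N}$, \emph{plus} second-order remainders of size $|\phinn^{\prime\prime}|\,|a_{i+1}-a_i|^2$. In the bulk these are $O(\eps^2)$ each and harmless, but at the wrap-around bond $i=N-1$ the strain jump is $a_{-N}-a_{N-1}\conv D\bar{y}(-\smfrac12)-D\bar{y}(\smfrac12)$, which is $O(1)$ whenever $\int_\Om f\,\dx\neq0$ (since $W^\prime(D\bar{y})=\sigma+\Sigma$ and $\sigma$ is not periodic); this leaves the nonvanishing concavity term $\phinn\big(\smfrac{a+b}{2}\big)-\smfrac12\phinn(a)-\smfrac12\phinn(b)\geq0$ with $a=D\bar{y}(\smfrac12)$, $b=D\bar{y}(-\smfrac12)$. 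So either restrict to $\int_\Om f\,\dx=0$ or account for this single bond explicitly rather than dismissing it by periodicity. To be fair, this is a blind spot you share with the paper: its far-field bound on $T_2$ applies the Mean Value Theorem to $D\bar{y}$ across the identification point, which quietly assumes the same strain continuity at $i=N-1$; your route merely makes the issue visible because the telescoping is explicit.
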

  
  \medskip
  In contrast to the results of \cite{SSZ:2011,BraiCic:2007}, we emphasise that 
  we have an explicit representation of the $1^\text{st}$-order limit in terms
  of a minimisation problem in an infinite cell.
  Once more, the proof of this result divides into two parts, the liminf and
  limsup inequalities, which we prove in the next two sections.
  
  \subsection{The liminf inequality}
  \label{subsec:1liminf}
  The liminf inequality is the following statement.
  
  \begin{proposition}
  \label{prop:1liminf}
    If $y^\eps\conv y$ in $\LL^2$, then
    \begin{align*}
      \liminf_{\eps\conv0} \Fonee(y^\eps) \geq \Fone(y).
    \end{align*}
  \end{proposition}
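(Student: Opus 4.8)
The plan is to split according to the two branches of $\Fone$, disposing of $y\neq\bar y$ by compactness and reducing $y=\bar y$ to the infimum defining the cell problem. Suppose $\liminf_\eps\Fonee(y^\eps)<+\infty$ (otherwise the inequality is immediate), and pass to a subsequence realising the liminf along which $\Fonee(y^\eps)$ is bounded and each $y^\eps\in\AN$. Then $\Fe(y^\eps)=\Fc(\bar y)+\eps\,\Fonee(y^\eps)\conv\Fc(\bar y)$, so $\Fe(y^\eps)$ is uniformly bounded; Lemma~\ref{lem:Lp=>weak} gives $y^\eps\wconv y$ in $\HH^1$, and Proposition~\ref{prop:0liminf} yields $\Fc(y)\leq\lim_\eps\Fe(y^\eps)=\Fc(\bar y)=\min_{\A}\Fc$. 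Uniqueness of the minimiser (Section~\ref{sec:props.Ec.min}) then forces $y=\bar y$. In particular, if $y\neq\bar y$ the liminf cannot be finite, so $\liminf_\eps\Fonee(y^\eps)=+\infty=\Fone(y)$; and we are reduced to showing, for $y=\bar y$, that $\liminf_\eps\Fonee(y^\eps)\geq\inf_{r\in\ell^2(\Z)}\Einf(r)$.

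Working along the above subsequence, I would pass to the cell variable $r^\eps_i:=\Da1 y^\eps_i-\Da1\bar y_i$, extended by $0$ for $|i|\geq N$ so that $r^\eps\in\ell^2(\Z)$. The key structural observation is that, since $\Fone(\bar y)$ is an infimum, no lower-semicontinuity step is needed: it suffices to produce one competitor $\rho^\eps\in\ell^2(\Z)$ with $\Fonee(y^\eps)\geq\Einf(\rho^\eps)-o(1)$, because then $\Einf(\rho^\eps)\geq\inf_{\ell^2(\Z)}\Einf$ closes the argument. I would take $\rho^\eps:=r^\eps\,\chi_{\{|i|\leq M_\eps\}}$ with $M_\eps\conv\infty$ but $M_\eps\eps\conv0$ (e.g.\ $M_\eps=\lfloor\eps^{-1/2}\rfloor$). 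The bound $\sum_i(r^\eps_i)^2\leq C$ follows from the nearest-neighbour $l$-convexity dominating the concave second-neighbour contributions, i.e.\ from the estimates \eqref{eq:pwlowerbnd}--\eqref{eq:defassumbd} applied to the remainders below; a pigeonhole argument on this bound lets me fix the cut-off radius at a site where $r^\eps$ is small, so truncation changes the energy only by $o(1)$.

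The heart of the matter is a Taylor expansion of every bond potential in $\Ee(y^\eps)$ about the minimiser's discrete gradients: $\phin(\Da1 y^\eps_i)=\phin(\Da1\bar y_i)+\phin'(\Da1\bar y_i)\,r^\eps_i+R_i$ with a nonnegative convex remainder $R_i\geq\smfrac{l}{2}(r^\eps_i)^2$, and analogously for $\phinn,\psin,\psinn$. This decomposes $\Fonee(y^\eps)$ into (i) the bulk self-energy $\Ee(\bar y)-\eps^{-1}\Fc(\bar y)$ of the discretised minimiser, (ii) terms linear in $r^\eps$, and (iii) the remainders together with the defect terms. On the window $|i|\leq M_\eps$ one has $\Da1\bar y_i=D\bar y(x_i)+O(\eps)=F_0+O(M_\eps\eps)$, so replacing the reference value $\Da1\bar y_i$ by $F_0$ (at an error controlled by $M_\eps\eps$) turns (iii) into exactly $\sum_{|i|\leq M_\eps}\big[\Phin(\rho^\eps_i)+\Phinn(\smfrac{\rho^\eps_{i+1}+\rho^\eps_i}{2})\big]+\Edefone(\rho^\eps)=\Einf(\rho^\eps)$, the defect terms matching because the defect sits at $i=0$. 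Outside the window the combined remainders are again nonnegative by \eqref{eq:pwlowerbnd}--\eqref{eq:defassumbd}, so that part is $\geq-o(1)$ and may be discarded.

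The two points I expect to require the most care, and which form the technical core, are the vanishing of (i) and (ii). For (i), I would compare $\sum_i\phin(\Da1\bar y_i)$ and $\sum_i\phinn(\Da2\bar y_i)$ with $\eps^{-1}\int_\Om W(D\bar y)\dx$ through a Riemann-sum/Euler--Maclaurin consistency estimate; here $\bar y\in\CC^2(\Om)$ controls the quadrature error while the periodic structure of the reference chain makes the sums telescope correctly at the seam, so the self-energy is $o(1)$. For (ii), a discrete summation by parts rewrites the linear terms, together with the force contribution $\sum_i f_i\big(y^\eps_i-L(x_i+\smfrac12)\big)-\eps^{-1}\int_\Om f\big(\bar y-L(x+\smfrac12)\big)\dx$, in terms of the residual of the Euler--Lagrange equation \eqref{eq:ELeqn}; since $\bar y$ solves \eqref{eq:ELeqn} and is $\CC^2$, this residual is of higher order and, paired against the $\ell^2$-bounded $r^\eps$, contributes only $o(1)$. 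This is precisely the cancellation anticipated in Section~\ref{subsec:formal}, which is why the $\sigma$-dependent terms there do not survive and $\Einf$ is built solely from the shifted potentials $\Phin,\Phinn,\Psin,\Psinn$. Collecting (i)--(iii) gives $\Fonee(y^\eps)\geq\Einf(\rho^\eps)-o(1)\geq\inf_{\ell^2(\Z)}\Einf-o(1)$, and taking the liminf proves the proposition.
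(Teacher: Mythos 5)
Your architecture is essentially correct and genuinely different from the paper's. The paper's proof rewrites $\Fonee(y^\eps)$ as the sum \eqref{eq:altFoneedef} of window quantities $s_i^\eps$ defined in \eqref{eq:sidef}, proves the coercivity Lemmas \ref{lem:E1bdd=>strong} and \ref{lem:l2.coercivity}, extracts a weak $\ell^2(\Z)$ limit $r$ of $\Pe y^\eps$, and applies Fatou's lemma termwise: since every \emph{fixed} index collapses onto the defect as $\eps\conv0$, only the one-sided bound $s_i^\eps\geq-C\eps^{3/2}$ of \eqref{eq:bulk1coercive} plus pointwise limits (using $\sigma^\eps_i\conv\sigma(0)$ and $\sigma(0)=W^\prime(F_0)-\Sigma$) are ever needed, and $\liminf\Fonee(y^\eps)\geq\Einf(r)\geq\inf\Einf$. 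You instead avoid any limit object, observing correctly that an infimum target requires only a competitor $\rho^\eps$ with $\Fonee(y^\eps)\geq\Einf(\rho^\eps)-o(1)$; your truncation with a pigeonhole-chosen cut-off is workable, your identification of $y=\bar y$ via Proposition \ref{prop:0liminf} plus uniqueness is a legitimate shortcut past the strong coercivity lemma, and your route even retains the exact constraint $\sum_ir^\eps_i=0$ (which the paper's weak limit loses, as its remark notes). The quantitative ingredients ($\sigma$-consistency, EL residual, convexity coercivity) are the same as the paper's; note only that your $\ell^2$ bound on $r^\eps$ and your $o(1)$ bound on the linear terms (ii) are mutually dependent as stated, so the linear terms must be absorbed into the quadratic coercivity, as the paper does with its $C\eps^{3/2}\norm{Du^\eps}_2$ terms.

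There is, however, one genuine gap: the periodic seam. The extension by periodic differences creates exactly one wrapped second-neighbour bond, $\Da2y_{N-1}$, whose reference value for $\Te\bar y$ is $\smfrac12\big(\Da1\bar y_{N-1}+\Da1\bar y_{-N}\big)\approx\smfrac12\big(D\bar y(\smfrac12)+D\bar y(-\smfrac12)\big)$. Since $W^\prime(D\bar y)=\sigma+\Sigma$ with $\sigma(\smfrac12)=\int_\Om f$, we have $D\bar y(\smfrac12)\neq D\bar y(-\smfrac12)$ whenever $\int_\Om f\neq0$, so your claim that the seam ``telescopes'' fails: the self-energy (i) is \emph{not} $o(1)$ — the wrapped bond contributes a nonvanishing (nonnegative, by concavity of $\phinn$) surface-type term — and, more seriously, the Euler--Lagrange residual entering (ii) is $O(1)$ at the two seam sites, not of higher order. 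Paired against $r^\eps_{N-1},r^\eps_{-N}$, which are only $\ell^2$-bounded, and completed by the quadratic remainder, the seam cluster in your decomposition is bounded below only by a negative \emph{constant}, which breaks $\Fonee(y^\eps)\geq\Einf(\rho^\eps)-o(1)$. The repair is precisely the paper's manoeuvre: do not Taylor-expand the wrapped bond about its own base point, but split it first by concavity, $\phinn(\Da2y_{N-1})\geq\smfrac12\phinn(\Da1y_{N-1})+\smfrac12\phinn(\Da1y_{-N})$ as in \eqref{eq:pwlowerbnd}, and then apply pointwise convexity and \eqref{eq:ELeqn}; this shows the seam window obeys the same one-sided bound $\geq-C\eps^{3/2}$ as the bulk and can be discarded with the tail (it lies far outside your window $|i|\leq M_\eps$ anyway). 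This is the structural reason the paper only ever uses one-sided termwise bounds rather than consistency \emph{equalities}: the liminf survives a positive $O(1)$ seam excess, whereas your stated equalities for (i) and (ii) do not. With this local correction, and with the routine base-point replacement estimate $O(M_\eps^{3/2}\eps)$ on the window (harmless for $M_\eps=\lfloor\eps^{-1/2}\rfloor$), your proof goes through.
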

  
  \medskip
  As in the proof of Proposition \ref{prop:0liminf}, we use a 
  coercivity result which says uniform boundedness of $\Fonee(y^\eps)$ implies
  a form of compactness. In this proof, there are two such results, which are
  employed at crucial steps in the main argument. The first of these results,
  Lemma \ref{lem:E1bdd=>strong}, states that if $\Fonee(y^\eps)$ is uniformly 
  bounded then the weak convergence of $y^\eps$ in $\HH^1$ proven in Lemma
  \ref{lem:Lp=>weak} improves to strong convergence in $\HH^1$. The
  second, Lemma \ref{lem:lpwkconv}, describes coercivity in a topology which
  we use to describe perturbations to the minimiser of $\Fc$ close to
  the defect. Once these results have been obtained, the main argument will
  follow by applying Fatou's Lemma to a suitable reinterpretation of
  $\Fonee(y^\eps)$.
  
  The first key step before proving the coercivity results is to rewrite
  $\Fonee(y^\eps)$ by using integration by parts on the external force terms.
  For $\Fc(\bar{y})$,
  \begin{align*}
    \int_\Om f\big(\bar{y}-L(x+\smfrac12)\big)\dx =
      -\int_\Om \sigma(D\bar{y}-L)\dx,
  \end{align*}
  using the boundary conditions, where $\sigma(x):=\int_{-1/2}^xf(t)\dt$ as in
  Section \ref{subsec:formal}. Analogously, recursively define
  \begin{align*}
    \sigma^\eps_{-N}&:=-\smfrac12\eps f_{-N},\\
    \sigma^\eps_i&:=\sigma^\eps_{i-1}+\eps f_{i-1},
  \end{align*}
  This leads to the representation
  \begin{align*}
    \sum_{i=-N}^{N-1}f_i\,u_i = \sum_{i=-N}^{N-1}\frac{\sigma^\eps_{i+1}
      -\sigma^\eps_i}{\eps}u_i=-\sum_{i=-N}^{N-1}\sigma^\eps_{i+1}\Da1u_i.
  \end{align*}
  We define the step function $\sigma^\eps:\Om\conv\R$
  \begin{align*}
    \sigma^\eps(x):=\sum_{i=-N}^{N-1}\sigma^\eps_{i+1}\,\chi_{(x_i,x_{i+1})}(x),
  \end{align*}
  so that if $y\in\AN$,
  \begin{align*}
    \eps\Eext(y)=-\int_\Om\sigma^\eps\,Du\dx.
  \end{align*}
  Using these definitions, we perform careful estimates of $\Fonee(y^\eps)$ by
  splitting the domain of integration over the intervals $(x_i,x_{i+2})$.
  For $y^\eps\in\AN$, define
  \begin{multline}
    s_i^\eps:= \phinn(\Da2y^\eps_i)+\smfrac12\phin(\Da1y^\eps_i)
    +\smfrac12\phin(\Da1y^\eps_{i+1})
      -\smfrac12\sigma_{i+1}^\eps\Da1u^\eps_i
      -\smfrac12\sigma_{i+2}^\eps\Da1u^\eps_{i+1}\\
      -\avint_{x_i}^{x_{i+2}}W(D\bar{y})-\sigma\,D\bar{u}
      +\Sigma\,\big(Dy^\eps-D\bar{y}\big)\dx.\label{eq:sidef}
  \end{multline}
  if $i\in\{-N,\ldots,N-1\}$, and set $s_i^\eps:=0$ otherwise. Then it is easy 
  to check that
  \begin{align}
    \Fonee(y^\eps)= \sum_{i=-\infty}^{\infty}s_i^\eps+\Edefone\big(\Da1y^\eps
      -F_0\big).\label{eq:altFoneedef}
  \end{align}
  We are now in a position to prove the coercivity results.
  
  \begin{lemma}[Strong $\HH^1$ coercivity]
  \label{lem:E1bdd=>strong}
  If $y^\eps\conv y$ in $\LL^2$, and $\Fonee(y^\eps)$ is uniformly bounded for
  all $\eps>0$, then $y^\eps\conv\bar{y}$ in $\HH^1$.
  \end{lemma}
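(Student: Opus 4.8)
The plan is to turn the hypothesis that $\Fonee(y^\eps)$ is $O(1)$ into a quadratic estimate on $Dy^\eps-D\bar y$ that is $O(\eps)$, using the strict ($l$-)convexity of $W$; the extra order of $\eps$ is exactly what upgrades the weak convergence of Lemma \ref{lem:Lp=>weak} to strong convergence. First I would record the cheap preliminaries. Since $\Fe(y^\eps)=\Fc(\bar y)+\eps\,\Fonee(y^\eps)$ and $\Fonee(y^\eps)$ is uniformly bounded, $\Fe(y^\eps)$ is uniformly bounded, so Lemma \ref{lem:Lp=>weak} gives $y^\eps\wconv y$ in $\HH^1$ together with a uniform bound on $\norm{Dy^\eps}_2$; this bound is what I will use to control error terms below. (The identification $y=\bar y$ will fall out of the final estimate, but can equally be read off from Proposition \ref{prop:0liminf} and the uniqueness of the minimiser $\bar y$.)

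For the main estimate I would work from the representation \eqref{eq:altFoneedef}, namely $\Fonee(y^\eps)=\sum_i s_i^\eps+\Edefone(\Da1 y^\eps-F_0)$, and bound the bulk terms $s_i^\eps$ of \eqref{eq:sidef} from below by a genuine quadratic. The engine is the pointwise inequality from $l$-convexity, $W(a)\ge W(b)+W'(b)(a-b)+\tfrac l2 (a-b)^2$, applied with $b=D\bar y(x)$ and $a=Dy^\eps(x)$. Two structural inputs make it usable: the concavity estimate \eqref{eq:pwlowerbnd} bounds the discrete pair energy $\phin(\Da1 y^\eps_i)+\phinn(\Da2 y^\eps_i)$ from below by $W(Dy^\eps)$ cell-by-cell, while the Euler--Lagrange equation \eqref{eq:ELeqn}, in its integrated form $W'(D\bar y)=\sigma+\Sigma$, lets me rewrite the linear term $W'(D\bar y)(Dy^\eps-D\bar y)$ using precisely the force and reference terms already subtracted in $s_i^\eps$. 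Because $s_i^\eps$ contains the cell average $\avint_{x_i}^{x_{i+2}}(\cdots)\dx=\tfrac1{2\eps}\int_{x_i}^{x_{i+2}}(\cdots)\dx$, the quadratic remainder appears with a factor $\eps^{-1}$, and I expect, for bulk indices,
\[
  s_i^\eps \ \ge\ \frac{l}{4\eps}\int_{x_i}^{x_{i+2}}\big|Dy^\eps-D\bar y\big|^2\dx\ +\ \mathrm{err}_i^\eps,
\]
where $\mathrm{err}_i^\eps$ collects the discretisation mismatch between the step function $\sigma^\eps$ and $\sigma$.

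I would then sum over $i$. The linear contributions cancel exactly once the boundary conditions $y^\eps(\pm\tfrac12)=\bar y(\pm\tfrac12)$ are invoked, since $\int_\Om(Dy^\eps-D\bar y)\dx=0$ removes the constant $\Sigma$; meanwhile $\sum_i \mathrm{err}_i^\eps\to 0$ because $f\in\CC^2$ forces $\norm{\sigma^\eps-\sigma}_\infty=O(\eps)$ and $\norm{Dy^\eps}_2$ is bounded. As each interior point lies in two overlapping cells, summing the quadratic terms yields a fixed multiple of $\eps^{-1}\norm{Dy^\eps-D\bar y}_2^2$. Combining this with a lower bound on the finitely many defect-adjacent terms and on $\Edefone(\Da1 y^\eps-F_0)$, obtained from the domination estimate \eqref{eq:defassumbd}, gives $\Fonee(y^\eps)\ge \tfrac{c}{\eps}\norm{Dy^\eps-D\bar y}_2^2-C$. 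Rearranging and using the uniform bound on $\Fonee(y^\eps)$ then forces $\norm{Dy^\eps-D\bar y}_2^2=O(\eps)\to 0$; since $y^\eps-\bar y$ vanishes at $-\tfrac12$, Poincaré promotes this to $y^\eps\to\bar y$ in $\HH^1$, which in particular identifies $y=\bar y$.

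The step I expect to be the main obstacle is making the per-cell bound sharp while simultaneously handling the defect. In the bulk I must retain the exact constant $l/2$ from convexity — the crude bounds of \eqref{eq:bulkbddbelow} discard it and only yield boundedness, not convergence — so the cancellation of the linear terms against the subtracted reference energy has to be exact up to $O(\eps)$. Near the defect, the differences $\psi-\phi$ entering $\Edefone$ are evaluated at strains I do not yet control pointwise (Lemma \ref{lem:lpwkconv} is proved only afterwards), so these finitely many terms must be bounded below using concavity and the domination Assumption (4) alone; the care required there, rather than the bulk convexity argument, is the delicate point.
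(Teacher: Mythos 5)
Your proposal is correct and follows essentially the same route as the paper: the same representation $\Fonee(y^\eps)=\sum_i s_i^\eps+\Edefone$, the same per-cell lower bound via \eqref{eq:pwlowerbnd} and the $l$-convexity of $W$ with the integrated Euler--Lagrange equation $W^\prime(D\bar{y})=\sigma+\Sigma$ cancelling the linear terms, the same domination-based bound near the defect, and the same final rearrangement giving $\norm{Dy^\eps-D\bar{y}}_2^2\leq C\eps$. The only cosmetic differences are that you control the quadrature mismatch by the crude bound $\norm{\sigma-\sigma^\eps}_\infty=O(\eps)$ (which yields an $O(1)$ total error, still sufficient) where the paper proves a sharper per-cell $O(\eps^{3/2})$ estimate \eqref{eq:sigmaest3}, and that you remove $\Sigma$ by global summation and the boundary conditions rather than per cell.
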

  
  \begin{proof}
  Since $\Fonee(y^\eps)$ is uniformly bounded, we know that for some $C\in\R$,
  \begin{align*}
    \big|\Fe(y^\eps)-\Fc(\bar{y})\big|\leq C\eps,
  \end{align*}
  which immediately implies that $\Fe(y^\eps)\conv\Fc(\bar{y})$ as $\eps\conv0$.
  Consequently, Lemma \ref{lem:Lp=>weak} applies and so $\norm{Dy^\eps}_2$ is
  uniformly bounded. Let $i\in\{-N,\ldots,N-1\}$. We estimate $s_i^\eps$ below:
  \begin{align*}
    s_i^\eps&\geq \avint_{x_i}^{x_{i+2}}\Big(W(Dy^\eps)-W(D\bar{y})
      -\sigma^\eps Du^\eps+\sigma D\bar{u}-\Sigma\,\big(Dy^\eps-D\bar{y}\big
      )\Big)\dx,\\
    &\geq\avint_{x_i}^{x_{i+2}}\Big(W^\prime(D\bar{y})\big(Dy^\eps-D\bar{y}\big)
      +l\,|Dy^\eps-D\bar{y}|^2-\sigma^\eps Du^\eps+\sigma\,D\bar{u}-\Sigma\,
      \big(Dy^\eps-D\bar{y}\big)\Big)\dx,
  \end{align*}
  using the concavity of $\phinn$ on the first line, and the $l$-convexity of
  $W$ on the second. Next, the Euler--Lagrange equation \eqref{eq:ELeqn}
  implies that
  \begin{align}
    s_i^\eps&\geq\avint_{x_i}^{x_{i+2}}\Big(\sigma\big(Dy^\eps-D\bar{y}\big)
      +l\,|Dy^\eps-D\bar{y}|^2-\sigma^\eps Du^\eps+\sigma D\bar{u}\Big)\dx
      ,\notag\\
    &\geq\avint_{x_i}^{x_{i+2}}\Big(\big(\sigma-\sigma^\eps\big)Du^\eps
      +l\,|Dy^\eps-D\bar{y}|^2 \Big)\dx.\label{eq:siest1}
  \end{align}
  The latter term in the above integral is of the form we are looking for, so it
  now remains to show that the other term vanishes in the limit. Once this is
  done, we then show that the defect energy is also suitably bounded below.
  
  \subsubsection*{Pointwise estimate on $\sigma-\sigma^\eps$}
  Noting that $Du^\eps$ is constant on the intervals $(x_i,x_{i+1})$ and using
  the definitions of $\sigma$ and $\sigma^\eps$, we rewrite
  \begin{align*}
    \avint_{x_i}^{x_{i+1}}\big(\sigma-\sigma^\eps\big)\dx
      &=\avint_{x_i}^{x_{i+1}}\bigg( \int_{-1/2}^xf(t)\dt-\eps\sum_{j=-N}^{i-1}
      \smfrac12\big(f_j+f_{j+1}\big)-\smfrac12\eps f_i\bigg)\dx,\\
    &=\int_{-1/2}^{x_i}\Big(f(t)-\big(\Te f\big)(t)\Big)\dt
      +\avint_{x_i}^{x_{i+1}}\bigg(\int_{x_i}^xf(t)\dt
      -\smfrac12\eps f_i\bigg)\dx.
  \end{align*}
  Since we know that $f\in\CC^2$, standard results about interpolation error 
  (see for example \cite{SuliMayers}) imply that
  \begin{align}
    \bigg|\int_{-1/2}^{x_i}\Big(f(t)-\big(\Te f\big)(t)\Big)\dt\bigg|\leq 
      \smfrac{1}{12}\eps^2\norm{f^{\prime\prime}}_\infty.\label{eq:sigmaest1}
  \end{align}
  For the other term, we Taylor expand $f(t)$ at $x_i$, then evaluate integrals
  to show that
  \begin{align}
    \bigg|\,\avint_{x_i}^{x_{i+1}}\int_{x_i}^xf(t)\dt-\smfrac12\eps f_i\dx\bigg|\leq 
    \smfrac16\eps^2\norm{f^\prime}_\infty.\label{eq:sigmaest2}
  \end{align}
  Combining \eqref{eq:sigmaest1} and \eqref{eq:sigmaest2}, we have
  \begin{align}
    \bigg|\,\avint_{x_i}^{x_{i+2}}(\sigma-\sigma^\eps)Du^\eps\dx\bigg|&\leq
       \Big(\smfrac{1}{12}\eps^2\norm{f^{\prime\prime}}_\infty+
       \smfrac16\eps^2\norm{f^\prime}_\infty\Big)\frac{|\Da1u^\eps_i|
       +|\Da1u^\eps_{i+1}|}{2},\notag\\
       &\leq C\eps^2\Big(|\Da1u^\eps_i|^2+|\Da1u^\eps_{i+1}|^2\Big)^{1/2},
       \notag\\
       &\leq C\eps^{3/2}\norm{Du^\eps}_2,\label{eq:sigmaest3}
  \end{align}
  where on the second line we used Jensen's inequality, and on the third line
  we used $\eps^{1/2}$ and added further postive terms inside the brackets to
  get the estimate.
  This can be used in \eqref{eq:siest1} to give
  \begin{align}
    s_i^\eps\geq -C\eps^{3/2}\norm{Du^\eps}_2
      +l\,\avint_{x_i}^{x_{i+2}}|Dy^\eps-D\bar{y}|^2\dx.\label{eq:bulk1coercive}
  \end{align}
  
  \subsubsection*{Lower bound on defect energy}
  Using \eqref{eq:defbddbelow}, the fact that
  $W(D\bar{y})-(\sigma+\Sigma) D\bar{u}$ is finite and estimate
  \eqref{eq:sigmaest3},
  \begin{align*}
    \sum_{i=-2}^0s_i^\eps+\Edefone(Dy^\eps)&\geq \sum_{i=-2}^0\smfrac14l\,
      \Big(\big|\Da1y^\eps_i\big|^2+\big|\Da1y^\eps_{i+1}\big|^2\Big)
      +C-\avint_{x_i}^{x_{i+2}}\Big(\sigma^\eps Du^\eps
      +\Sigma\,Du^\eps\Big)\dx,\\
    &=\sum_{i=-2}^0\,\avint_{x_i}^{x_{i+2}}\Big(l\,|Dy^\eps|^2-\sigma Du^\eps
      -(\sigma^\eps-\sigma)Du^\eps-\Sigma\,Du^\eps\Big)\dx+C,\\
    &\geq\sum_{i=-2}^0\avint_{x_i}^{x_{i+2}}\Big(l\,|Dy^\eps|^2
      -\norm{\sigma+\Sigma}_\infty|Du^\eps|+C\Big)\dx
      -C\eps^{3/2}\norm{Dy^\eps}_2.
  \end{align*}
  Since $D\bar{y}$ is bounded above and below, by adjusting constants suitably
  we have that
  \begin{align}
    \sum_{i=-2}^0s_i^\eps+\Edefone(Dy^\eps)&\geq \sum_{i=-2}^0
      \avint_{x_i}^{x_{i+2}}l\,|Dy^\eps-D\bar{y}|^2\dx +C
      -C\eps^{3/2}\norm{Dy^\eps}_2.\label{eq:defect1coercive}
  \end{align}
  
  \subsubsection*{Conclusion of the argument}
  By summing over $i$ in \eqref{eq:bulk1coercive}, combining with
  \eqref{eq:defect1coercive}, and using the fact that $\norm{Dy^\eps}_2$ is
  uniformly bounded, we have shown that
  \begin{align*}
    \Fonee(y^\eps)\geq -C\eps^{1/2} + l\,\sum_{i=-N}^{N-1}
      \avint_{x_i}^{x_{i+1}}|Dy^\eps-D\bar{y}|^2\dx+C.
  \end{align*}
  Multiplying this inequality by $\eps$ and using the assumption that
  $\Fonee(y^\eps)$ is uniformly bounded, we have
  \begin{align}
    C\eps\geq l\norm{Dy^\eps-D\bar{y}}_2^2,\label{eq:1coercivity}
  \end{align}
  which proves the result.
  \end{proof}
  
  To prove the second coercivity result, we define the sequence of operators
  $\Pe:\AN\conv\ell^2(\Z)$ by
  \begin{align*}
    \big(\mathcal{P}_\eps y\big)_i:=\begin{cases}
      \Da1y_i-\Da1\bar{y}_i &i\in\{-N,\ldots,N-1\}\\
      0 &\text{otherwise}.
    \end{cases}
  \end{align*}
  Clearly $\mathcal{P}_\eps y$ is well-defined since this sequence is non-zero
  only on a finite set.
  
  \begin{lemma}[Weak $\ell^2(\Z)$ coercivity]
  \label{lem:l2.coercivity}
  If $\Fonee(y^\eps)$ is uniformly bounded, then there exists a subsequence of
  $\mathcal{P}_\eps y^\eps$ which converges weakly in $\ell^2(\Z)$.
  \end{lemma}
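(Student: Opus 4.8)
The plan is to exploit that $\ell^2(\Z)$ is a Hilbert space, so that bounded sequences are weakly precompact by the Banach--Alaoglu theorem; accordingly, it suffices to prove that $\norm{\Pe y^\eps}_{\ell^2(\Z)}$ is bounded uniformly in $\eps$. The whole argument therefore reduces to a single quantitative estimate, and the decisive point is a precise matching of powers of $\eps$: the conversion factor relating a discrete $\ell^2$ sum to a continuum $\LL^2$ norm is exactly $\eps$, and this will cancel the $O(\eps)$ smallness furnished by the strong $\HH^1$ coercivity bound \eqref{eq:1coercivity}.

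First I would record that the hypothesis already yields the estimate \eqref{eq:1coercivity}. Since $\Fonee(y^\eps)$ is uniformly bounded we have $\Fe(y^\eps)\conv\Fc(\bar{y})$, so $\Fe(y^\eps)$ is bounded; combining the coercivity estimate \eqref{eq:bulkbddbelow} with a direct bound on $\eps\Eext(y^\eps)$ --- obtained by integrating by parts and using the uniform boundedness of the step function $\sigma^\eps$ together with $\norm{Du^\eps}_1\leq\norm{Du^\eps}_2$ on the bounded domain $\Om$ --- shows that $\norm{Dy^\eps}_2$ is uniformly bounded. With this in hand the computation carried out in the proof of Lemma \ref{lem:E1bdd=>strong} applies verbatim (it uses only boundedness of $\norm{Dy^\eps}_2$ beyond the hypothesis on $\Fonee$) and delivers
\begin{align*}
  l\,\norm{Dy^\eps-D\bar{y}}_2^2\leq C\eps.
\end{align*}

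The second step is the scaling identity converting this into a bound on $\Pe y^\eps$. Because $y^\eps$ is the piecewise linear interpolant, $Dy^\eps$ is constant and equal to $\Da1y^\eps_i$ on $(x_i,x_{i+1})$, while $\bar{y}\in\CC^2(\Om)$ gives $\Da1\bar{y}_i=\avint_{x_i}^{x_{i+1}}D\bar{y}\dt$ by the fundamental theorem of calculus. Hence $\big(\Pe y^\eps\big)_i=\avint_{x_i}^{x_{i+1}}(Dy^\eps-D\bar{y})\dt$, and Jensen's inequality yields
\begin{align*}
  \big|\big(\Pe y^\eps\big)_i\big|^2\leq\avint_{x_i}^{x_{i+1}}|Dy^\eps-D\bar{y}|^2\dt.
\end{align*}
Multiplying by $\eps$ and summing over $i\in\{-N,\ldots,N-1\}$ gives
\begin{align*}
  \eps\,\norm{\Pe y^\eps}_{\ell^2(\Z)}^2\leq\sum_{i=-N}^{N-1}\int_{x_i}^{x_{i+1}}|Dy^\eps-D\bar{y}|^2\dx=\norm{Dy^\eps-D\bar{y}}_2^2\leq\smfrac{C}{l}\eps,
\end{align*}
so that $\norm{\Pe y^\eps}_{\ell^2(\Z)}^2\leq C/l$ uniformly in $\eps$.

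Finally, uniform boundedness in the Hilbert space $\ell^2(\Z)$ furnishes a weakly convergent subsequence, which is the assertion. I expect the only delicate point to be the bookkeeping in the first step: verifying that the derivation of \eqref{eq:1coercivity} really needs nothing more than boundedness of $\norm{Dy^\eps}_2$ (so that it is legitimate under the weaker hypothesis of the present lemma, which does not presuppose $\LL^2$ convergence of $y^\eps$), and securing that boundedness directly from the energy bound. The scaling computation itself is elementary once the averaging interpretation of $\Pe y^\eps$ is noticed.
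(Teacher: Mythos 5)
Your proof is correct and its core is identical to the paper's: divide \eqref{eq:1coercivity} by $\eps$, observe that $(\Pe y^\eps)_i$ is the average of $Dy^\eps-D\bar{y}$ over $(x_i,x_{i+1})$ so that Jensen's inequality converts the rescaled $\LL^2$ bound into $l\norm{\Pe y^\eps}_{\ell^2(\Z)}^2\leq C$, and then invoke weak sequential compactness of bounded sets in the Hilbert space $\ell^2(\Z)$. The one genuine difference is your opening step: the lemma as stated assumes only uniform boundedness of $\Fonee(y^\eps)$, whereas estimate \eqref{eq:1coercivity} was derived in Lemma \ref{lem:E1bdd=>strong} under the additional hypothesis $y^\eps\conv y$ in $\LL^2$ (used there, via Lemma \ref{lem:Lp=>weak}, only to secure the uniform bound on $\norm{Dy^\eps}_2$). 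The paper's proof silently cites \eqref{eq:1coercivity} anyway --- defensible since the lemma is only ever applied to $\LL^2$-convergent sequences --- while you close the gap honestly: your direct bound $|\eps\Eext(y^\eps)|\leq\norm{\sigma^\eps}_\infty\norm{Du^\eps}_1\leq C\norm{Du^\eps}_2$, combined with \eqref{eq:bulkbddbelow}, yields the gradient bound from the energy bound alone, after which the derivation of \eqref{eq:1coercivity} indeed goes through verbatim. So your argument is marginally stronger than the printed one, proving the lemma exactly as stated rather than under an implicit extra hypothesis.
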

  
  \begin{proof}
  By dividing \eqref{eq:1coercivity} by $\eps$ and using Jensen's inequality,
  we have
  \begin{align*}
    C\geq l\sum_{i=-N}^{N-1}\avint_{x_i}^{x_{i+1}}|Dy^\eps-D\bar{y}|^2\dx
      \geq l \sum_{i=-N}^{N-1}\bigg|\,\avint_{x_i}^{x_{i+1}}Dy^\eps-D\bar{y}\dx
      \bigg|^2=l\norm{\Pe y^\eps}_{\ell^2(\Z)}^2.
  \end{align*}
  We have shown that the sequence $\Pe y^\eps$ is uniformly bounded in
  $\ell^2(\Z)$, so in particular, it must have a weakly convergent subsequence.
  \end{proof}
  
  To conclude the argument which will prove the liminf inequality, we will use
  the following characterisation of weak convergence in $\ell^2(\Z)$ which
  follows easily from the Riesz Representation Theorem.
  
  \begin{lemma}
  \label{lem:lpwkconv}
  A sequence $(r^\eps)\subset\ell^2(\Z)$ converges weakly to $r\in\ell^2(\Z)$
  as $\eps\conv0$ if and only if the following two conditions hold:
  \begin{enumerate}
    \item $\norm{r^\eps}_{\ell^2}$ is uniformly bounded,
    \item $r^\eps\conv r$ pointwise (almost everywhere in the counting 
          measure) as $\eps\conv0$.
  \end{enumerate}
  \end{lemma}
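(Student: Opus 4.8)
The plan is to prove both implications directly, invoking the Riesz Representation Theorem to identify the dual of $\ell^2(\Z)$ with $\ell^2(\Z)$ itself, so that $r^\eps\wconv r$ is equivalent to the scalar convergence $\sum_{k\in\Z}r^\eps_k\,s_k\conv\sum_{k\in\Z}r_k\,s_k$ for every $s\in\ell^2(\Z)$. The two conditions then emerge by testing against the right families of sequences.

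For the forward implication, suppose $r^\eps\wconv r$. The uniform boundedness of $\norm{r^\eps}_{\ell^2}$ is an immediate consequence of the Banach--Steinhaus theorem, since any weakly convergent sequence in a Banach space is norm-bounded. For the pointwise convergence, I would test against the standard basis vectors $\delta^{(k)}\in\ell^2(\Z)$ (defined by $\delta^{(k)}_j=1$ if $j=k$ and $0$ otherwise): testing the weak convergence against $\delta^{(k)}$ gives exactly $r^\eps_k\conv r_k$ for each $k\in\Z$, which is condition (2).

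For the reverse implication, suppose (1) and (2) hold, say with $\norm{r^\eps}_{\ell^2}\leq M$ for all $\eps$. Fix $s\in\ell^2(\Z)$ and $\delta>0$. The strategy is the standard three-$\delta$ argument exploiting density of finitely supported sequences: choose $K$ large enough that the truncation $s^K$ (agreeing with $s$ on $\{-K,\ldots,K\}$ and zero elsewhere) satisfies $\norm{s-s^K}_{\ell^2}<\delta$. I would then split
\begin{align*}
  \sum_{k\in\Z}(r^\eps_k-r_k)\,s_k = \sum_{|k|\leq K}(r^\eps_k-r_k)\,s_k + \sum_{|k|>K}(r^\eps_k-r_k)\,s_k.
\end{align*}
The first sum is finite, and each summand tends to $0$ by the pointwise convergence (2), so the entire finite sum vanishes as $\eps\conv0$. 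For the second sum, Cauchy--Schwarz together with the bound $\norm{r^\eps-r}_{\ell^2}\leq M+\norm{r}_{\ell^2}$ (recalling that $r\in\ell^2(\Z)$ is assumed) yields the estimate $(M+\norm{r}_{\ell^2})\,\delta$. Hence $\limsup_{\eps\conv0}\big|\sum_{k}(r^\eps_k-r_k)s_k\big|\leq(M+\norm{r}_{\ell^2})\,\delta$, and letting $\delta\conv0$ completes the proof.

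The only mildly delicate point, and the main (though still routine) obstacle, is the tail estimate in the reverse direction: one must use the uniform $\ell^2$-bound (1) to control the contribution of the pairing from large $|k|$, since pointwise convergence alone is insufficient to justify interchanging the limit with the infinite sum. The uniform boundedness hypothesis is precisely what rules out mass escaping to infinity and renders the truncation error uniformly small in $\eps$.
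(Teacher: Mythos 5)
Your proof is correct and complete, and it takes exactly the route the paper intends: the paper never writes out a proof of this lemma, merely asserting that it ``follows easily from the Riesz Representation Theorem.'' Your argument is the standard elaboration of that remark---Riesz to reduce weak convergence in $\ell^2(\Z)$ to scalar pairings, Banach--Steinhaus and testing against the basis vectors $\delta^{(k)}$ for necessity, and the truncation plus Cauchy--Schwarz density argument (with the uniform bound controlling the tail) for sufficiency---so there is nothing to compare beyond noting that you have supplied the details the paper omits.
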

  
  \medskip
  
  As indicated at the beginning of this section, we apply Fatou's Lemma to the
  sum \eqref{eq:altFoneedef}. Suppose that $\Fonee(y^\eps)$ is uniformly bounded
  and $y^\eps\conv y$. Take a subsequence $y^{\eps_k}$ such that
  \begin{align*}
    \lim_{k\conv\infty}\mathcal{F}_1^{\eps_k}(y^{\eps_k})=\liminf_{\eps\conv0}\Fonee(y^\eps),
  \end{align*}
  and then using Lemma \ref{lem:l2.coercivity}, a further subsequence (which we 
  do not relabel) such that $\mathcal{P}_{\eps_k}y^{\eps_k}$ weakly converges to
  $r$ in $\ell^2(\Z)$. Since $\bar{y}\in\CC^2(\Om)$, we have that
  \begin{align*}
    \Da1\bar{y}_i=\avint_{x_i}^{x_{i+1}}D\bar{y}\dx\conv F_0:=D\bar{y}(0)
  \end{align*}
  as $\eps\conv0$. Fixing an index $i\in\Z$, Lemma \ref{lem:lpwkconv} 
  implies that
  \begin{align*}
    \big(\mathcal{P}_{\eps_k}y^{\eps_k}\big)_i&=\Da1y^{\eps_k}_i
      -\Da1\bar{y}_i,\\
    &\conv F_0+r_i-F_0
  \end{align*}
  as $k\conv\infty$, so that we may view $r$ as a perturbation to the 
  deformation gradient in an `infinitesimal' neighbourhood of the defect.
  The `pointwise' estimate \eqref{eq:bulk1coercive} implies that for
  $i\in\{-N,\ldots,N-1\}$
  \begin{align*}
    s_i^\eps+C\eps^{3/2}\geq0,
  \end{align*}
  so that
  \begin{align}
    \liminf_{k\conv\infty} \sum_{i=-N_k}^{N_k-1}s^{\eps_k}_i+C\eps_k^{3/2}&\geq
      \sum_{i=-\infty}^\infty\liminf_{k\conv\infty}\big(s_i^{\eps_k}
      +C\eps_k^{3/2}\big).\label{eq:liminf1fatou}
  \end{align}
  Since the potentials $\phi_i$ are continuous and $\sigma^\eps_i\conv\sigma(0)$
  as $\eps\conv0$ with $i$ fixed, we have that
  \begin{multline*}
    \liminf_{k\conv\infty}\big(s_i^{\eps_k}+\eps_k^{3/2}\big)
      =\phinn(F_0+\smfrac{r_i+r_{i+1}}{2})
      +\smfrac12\phin(F_0+r_i)+\smfrac12\phin(F_0+r_{i+1})
      -W(F_0)\\-\sigma(0)(F_0+\smfrac{r_i+r_{i+1}}{2})+\sigma(0)F_0
      -\Sigma\,\smfrac{r_i+r_{i+1}}{2}.
  \end{multline*}
  Recalling that $\bar{y}$ satisfies the Euler--Lagrange equations pointwise,
  we have that
  \begin{align*}
    \sigma(0)=W^\prime(F_0)-\Sigma,
  \end{align*}
  so that
  \begin{align}
    \liminf_{k\conv\infty}\big(s_i^{\eps_k}+\eps_k^{3/2}\big)
      =\Phinn(r_i+r_{i+1})+\smfrac12\Phin(r_i)+\smfrac12\Phin(r_{i+1}).
      \label{eq:liminf1pw}
  \end{align}
  Note that
  \begin{align}
    \lim_{k\conv\infty}\sum_{i=-N_k}^{N_k-1}C\eps_k^{3/2}
      =\lim_{k\conv\infty}C\eps_k^{1/2}=0,\label{eq:liminf1pert}
  \end{align}
  so then combining \eqref{eq:liminf1fatou}, \eqref{eq:liminf1pw} and
  \eqref{eq:liminf1pert}, we have that
  \begin{align*}
    \liminf_{k\conv\infty}\sum_{i=-\infty}^{\infty} s_i^{\eps_k}
      \geq\sum_{i=-\infty}^\infty\Phinn\big(\smfrac{r_i+r_{i+1}}{2}\big)
      +\smfrac12\Phin(r_i)+\smfrac12\Phin(r_{i+1}).
  \end{align*}
  Finally, by possibly taking further subsequences, we can assume that
  $\big(\mathcal{P}_{\eps_k}y^{\eps_k}\big)_i$ converges uniformly for
  $i\in\{-2,-1,0,1\}$, and then we have
  \begin{align*}
    \liminf_{\eps\conv0}\Fonee(y^\eps)&=
      \liminf_{k\conv\infty}\bigg(\sum_{i=-\infty}^{\infty}s_i^{\eps_k}
      +\Edefone(Dy^{\eps_k})\bigg),\\
    &\geq\Einf(r),\\
    &\geq\inf_{r\in\ell^2(\Z)}\Einf(r),
  \end{align*}
  proving Proposition \ref{prop:1liminf}.
  
  \begin{remark}
  By definition, we have 
  \begin{align*}
    \sum_{i=-N}^{N-1} \big(\Pe y^\eps\big)_i=0
  \end{align*}
  for any $y^\eps\in\AN$. If $\Pe y^\eps\wconv r$ in $\ell^1(\Z)$, we could
  conclude that
  \begin{align*}
    \sum_{i=-\infty}^{\infty}r_i=0;
  \end{align*}
  however, since we have convergence only in $\ell^2(\Z)$, this is not true in
  general. It is therefore clear that the set of compactly supported mean zero
  sequences is dense in the weak topology on $\ell^2(\Z)$.
  \end{remark}

  \subsection{The limsup inequality}
  \label{subsec:1limsup}
  The limsup inequality is the following statement.
  
  \begin{proposition}
  \label{prop:1limsup}
    For every $y\in\A$, there exists a sequence $y^\eps\conv y$ in $\LL^2$ such
    that
    \begin{align*}
      \limsup_{\eps\conv0} \Fonee(y^\eps) \leq \Fone(y).
    \end{align*}
  \end{proposition}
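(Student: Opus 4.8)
The case $y\neq\bar y$ is immediate: then $\Fone(y)=+\infty$, and any sequence converging to $y$ in $\LL^2$ (for instance the interpolants $\Te y$) satisfies the inequality trivially. So I assume $y=\bar y$ and must exhibit $y^\eps\conv\bar y$ in $\LL^2$ with $\limsup_{\eps\conv0}\Fonee(y^\eps)\leq\inf_{r\in\ell^2(\Z)}\Einf(r)$. The plan is to build the recovery sequence directly from a near-optimal perturbation. First I reduce to compactly supported mean-zero sequences. Given $\delta>0$, pick $r\in\ell^2(\Z)$ with $\Einf(r)\leq\inf\Einf+\delta$, truncate it to $\{-M,\ldots,M\}$ (the tail $\sum_{|i|>M}\Phin(r_i)$ vanishes as $M\conv\infty$, since $\Phin(0)=\Phin^\prime(0)=0$ gives $\Phin(t)\leq Ct^2$ and $r\in\ell^2$), and then restore the mean-zero constraint by spreading the residual mass $m_M:=\sum_{|i|\leq M}r_i$ as $-m_M/K$ over $K$ sites placed far from the defect core $\{-2,-1,0,1\}$. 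Because $\Phin$ and $\Phinn$ vanish to second order at $0$, this correction costs $O(m_M^2/K)$, so choosing $K$ large yields a compactly supported mean-zero $\tilde r$ with $\Einf(\tilde r)\leq\Einf(r)+\delta$. This is precisely the density statement recorded in the Remark following Proposition \ref{prop:1liminf}.

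Given such a $\tilde r$, supported in $\{-M,\ldots,M\}$ with $\sum_i\tilde r_i=0$, I define $y^\eps\in\AN$ by prescribing its slopes $\Da1y^\eps_i:=\Da1\bar y_i+\tilde r_i$. The mean-zero condition guarantees $\sum_{i=-N}^{N-1}\eps\,\Da1y^\eps_i=L$, so the boundary conditions hold and indeed $y^\eps\in\AN$; moreover $y^\eps$ coincides with the interpolant $\Te\bar y$ outside an interval of width $O(M\eps)$ about the defect, whence $\norm{y^\eps-\Te\bar y}_\infty=O(\eps)$ and $y^\eps\conv\bar y$ in $\LL^2$.

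Then I evaluate $\Fonee(y^\eps)$ through the representation \eqref{eq:altFoneedef}, splitting $\sum_i s_i^\eps$ into a finite core ($|i|\leq M+1$) and a bulk. In the bulk $y^\eps$ agrees with $\Te\bar y$, so each $s_i^\eps$ reduces to the consistency error between the discrete pure and force energies of the interpolant and the continuum energy of $\bar y$. Because $\Da1\bar y_i=\avint_{x_i}^{x_{i+1}}D\bar y$ is the \emph{exact} cell average (so the term $\Sigma\avint_{x_i}^{x_{i+2}}(Dy^\eps-D\bar y)\dx$ vanishes identically and the linear Taylor term cancels) and $\bar y\in\CC^2(\Om)$, a Taylor expansion shows each cell contributes $O(\eps^2)$ to $\eps\Epure$, while the force mismatch is controlled by the quadratic bound \eqref{eq:sigmaest3} on $\sigma-\sigma^\eps$; summing the $O(1/\eps)$ cells gives $\sum_{\mathrm{bulk}}s_i^\eps=O(\eps)\conv0$. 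In the core, the finitely many terms converge pointwise: by continuity of the potentials, $\Da1\bar y_i\conv F_0$ and $\sigma^\eps_i\conv\sigma(0)$, and using the pointwise Euler--Lagrange identity $\sigma(0)=W^\prime(F_0)-\Sigma$ exactly as in \eqref{eq:liminf1pw} each $s_i^\eps\conv\Phinn(\smfrac{\tilde r_i+\tilde r_{i+1}}{2})+\smfrac12\Phin(\tilde r_i)+\smfrac12\Phin(\tilde r_{i+1})$, while $\Edefone(\Da1y^\eps-F_0)\conv\Edefone(\tilde r)$. Hence $\lim_{\eps\conv0}\Fonee(y^\eps)=\Einf(\tilde r)\leq\inf\Einf+2\delta$, and letting $\delta\conv0$ along a diagonal sequence completes the proof.

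The main obstacle is the bulk consistency estimate: one must verify that the first-order energy of the interpolated minimiser genuinely vanishes, i.e.\ that the $O(\eps^2)$-per-cell errors do not accumulate to an $O(1)$ contribution after dividing by $\eps$. This hinges on the exactness of the cell-average identity for $\Da1\bar y_i$, which annihilates the first-order Taylor term, together with the $\CC^2$ regularity of $\bar y$ established in Section \ref{sec:props.Ec.min} and the quadratic interpolation bound on $\sigma-\sigma^\eps$; the remaining steps are then routine applications of continuity and the finiteness of the core.
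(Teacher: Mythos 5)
Your proposal is correct, but it takes a genuinely different route from the paper. The paper never works with near-minimisers or a density argument: it first proves that the infimum of $\Einf$ is \emph{attained} (Proposition \ref{prop:Einf.existence}), derives the Euler--Lagrange equations, and then establishes a discrete maximum principle, monotonicity (Corollary \ref{cor:monotonicity}) and exponential decay of the minimiser, whose real purpose in the limsup proof is to guarantee $r\in\ell^1(\Z)$. This is needed because the paper's recovery sequence uses the \emph{untruncated} minimiser on a window $\{-K,\ldots,K-1\}$ with $K=\lfloor\sqrt N\rfloor$ growing with $N$, and restores the boundary condition $y(\smfrac12)=L$ by subtracting the constant $\bar r^\delta=\delta\sum_{|i|\le K}r_i$ from every slope in the window; the bound $|\bar r^\delta|\le\delta\norm r_{\ell^1(\Z)}$ is exactly where $\ell^1$ membership enters. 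You instead restore the constraint before discretising, by truncating a near-optimal $r$ and spreading the residual mass over $K$ far-away sites at cost $O(m_M^2/K)$ (note $m_M$ need not be bounded in $M$ for $r\in\ell^2$, but your choice $K=K(M)$ absorbs this), obtaining a compactly supported mean-zero $\tilde r$; the recovery sequence then has fixed support, the boundary conditions hold exactly, the core converges termwise without any appeal to Fatou, and a diagonalisation in $\delta$ finishes. What each approach buys: yours is more elementary and more robust -- it needs neither attainment of the infimum nor any decay or regularity of minimisers, and would survive if the infimum were not attained -- at the price of a double limit; the paper's route avoids the diagonalisation in $\delta$ and, more importantly, the decay results it develops are of independent interest (they underpin the boundary-layer interpretation advertised in the abstract), so the paper gets the limsup inequality and the qualitative structure of minimisers from one construction. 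Two small points you should patch: (i) as in the paper's subsection ``Ensuring $y^{\eps,\delta}$ is well-defined'', you must check that the perturbed slopes $\Da1\bar y_i+\tilde r_i$ stay positive for small $\eps$ (finiteness of $\Einf(\tilde r)$ gives $F_0+\tilde r_i>0$, and $\Da1\bar y_i\conv F_0$ on the fixed support, so this is routine); (ii) the Remark after Proposition \ref{prop:1liminf} records \emph{weak} $\ell^2$ density of compactly supported mean-zero sequences, not the energy-density statement you need -- but since you prove the latter directly by the truncation-and-spreading estimate, the citation is inessential. Your bulk consistency estimate (exact cell averages killing the linear Taylor term, $\bar y\in\CC^2$, and the quadratic bound \eqref{eq:sigmaest3} on $\sigma-\sigma^\eps$) is sound and mirrors the cancellations the paper exploits in its $T_1,T_2,T_3$ decomposition.
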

  
  \medskip
  This statement is trivial in the case where $y\neq\bar{y}$, so we only need to
  construct the sequence for $y=\bar{y}$. In order to construct the limsup
  sequence, we will show that there exists a minimiser of $\Einf$, and then 
  combine a suitable truncation of this minimiser with $\Te\bar{y}$ to get the
  result.
  
  \begin{proposition}[Properties of $1^\text{st}$-order limit]
  \label{prop:Einf.existence}
  Let $\Einf:\ell^2(\Z)\conv\R$ be defined as in Section \ref{sec:1}. Then
  there exist minimisers of $\Einf$ which satisfy an infinite system of
  nonlinear algebraic Euler--Lagrange equations, given in \eqref{eq:EL.1}.
  \end{proposition}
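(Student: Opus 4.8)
The plan is to use the direct method of the calculus of variations, leaning on the convexity/concavity structure encoded in \eqref{eq:pwlowerbnd} and \eqref{eq:defassumbd}, and then to characterise minimisers by differentiating coordinatewise. Since $\Phin(0)=\Phinn(0)=0$ and $F_0>0$, the zero sequence has finite energy $\Einf(0)=\Edefone(0)$, so $\inf_{\ell^2(\Z)}\Einf<+\infty$ and it suffices to consider sequences of finite, uniformly bounded energy.

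\emph{Coercivity.} First I would show $\Einf(r)\geq c\norm{r}_{\ell^2(\Z)}^2-C$ for some $c>0$. Writing $\tilde W(t):=\Phin(t)+\Phinn(t)$, the $l$-convexity of $W$ gives $\tilde W(t)\geq\smfrac12 l\,t^2$, while the concavity of $\phinn$ (hence of $\Phinn$) yields, exactly as in \eqref{eq:pwlowerbnd},
\begin{align*}
\smfrac12\Phin(r_i)+\smfrac12\Phin(r_{i+1})+\Phinn\big(\smfrac{r_i+r_{i+1}}{2}\big)\geq\smfrac12\tilde W(r_i)+\smfrac12\tilde W(r_{i+1})\geq\smfrac{l}{4}\big(r_i^2+r_{i+1}^2\big).
\end{align*}
Distributing each $\Phin(r_i)$ equally between its two neighbouring bonds, the bulk part at every site $i$ outside $\{-2,-1,0,1\}$ contributes a term $g_i\geq\smfrac{l}{4}(r_i^2+r_{i+1}^2)\geq0$, bounding that portion of the energy below by a positive multiple of the corresponding tail of $\norm{r}_{\ell^2}^2$. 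For the finitely many remaining terms I would combine $\Edefone$ with the adjacent bulk contributions: by the very definitions of the tilded potentials this replaces the relevant $\Phi$'s by $\Psi$'s and produces precisely the left-hand side of \eqref{eq:defassumbd} in tilded form, the half-weights $\smfrac12\phin$ there being exactly those left over from distributing $\Phin(r_{-2})$ and $\Phin(r_1)$ across the interface. Estimate \eqref{eq:defassumbd}, after absorbing the finitely many affine terms arising from the tilde-shifts by completing the square, bounds this block below by $c'(r_{-2}^2+r_{-1}^2+r_0^2+r_1^2)-C$. Adding the two contributions gives coercivity.

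\emph{Existence.} Let $r^n$ be a minimising sequence. Coercivity bounds $\norm{r^n}_{\ell^2}$, so a subsequence satisfies $r^n\wconv r^\ast$ in $\ell^2(\Z)$, and by Lemma~\ref{lem:lpwkconv} this entails $r^n_i\conv r^\ast_i$ for every fixed $i$. The functional is \emph{not} convex, because of the concave second-neighbour terms, so weak lower semicontinuity is not available from convexity; instead, the rewriting above presents the bulk as a sum of the nonnegative terms $g_i$, each a lower semicontinuous function of $(r_i,r_{i+1})$ (allowing for the blow-up of $\phin$). Pointwise convergence gives $g_i(r^\ast)\leq\liminf_n g_i(r^n)$, so Fatou's Lemma yields $\sum_i g_i(r^\ast)\leq\liminf_n\sum_i g_i(r^n)$; and since $\Edefone$ depends on only four coordinates it converges along the sequence. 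Hence $\Einf(r^\ast)\leq\liminf_n\Einf(r^n)=\inf\Einf$, so $r^\ast$ is a minimiser. The conceptual point here is that the discrete structure together with pointwise convergence replaces the usual convexity-based lower semicontinuity.

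\emph{Euler--Lagrange equations.} A minimiser has finite energy, so $F_0+r^\ast_i>0$ for all $i$; combined with $r^\ast_i\conv0$ as $|i|\conv\infty$ this keeps the bond lengths uniformly positive, placing $r^\ast$ in the interior of the domain where all potentials are $\CC^2$. For each fixed $k$ only finitely many terms of $\Einf$ depend on $r_k$, so $t\mapsto\Einf(r^\ast+t\,e_k)$ is smooth near $0$ and its derivative vanishes there. For $k\notin\{-2,-1,0,1\}$ this is
\begin{align}
\Phin'(r_k)+\smfrac12\Phinn'\big(\smfrac{r_{k-1}+r_k}{2}\big)+\smfrac12\Phinn'\big(\smfrac{r_k+r_{k+1}}{2}\big)=0,\label{eq:EL.1}
\end{align}
and the equations for $k\in\{-2,-1,0,1\}$ are obtained by the same $\Phi\to\Psi$ substitutions identified above. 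I expect the coercivity estimate near the defect to be the main technical obstacle, as it is the step that genuinely invokes all of Assumptions~(1)--(5) at once; by contrast the lower semicontinuity, although delicate because $\Einf$ is nonconvex, is rendered routine by the positivity of the $g_i$ and Fatou's Lemma.
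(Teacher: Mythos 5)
Your proposal is correct and follows essentially the same route as the paper: finite infimum via the zero sequence, coercivity from the concavity of $\Phinn$ together with the $l$-convexity of $W=\phin+\phinn$ in the bulk and an estimate in the style of \eqref{eq:defassumbd} for the tilded defect block, existence by the direct method with Fatou's Lemma substituting for convexity-based lower semicontinuity, and the Euler--Lagrange system \eqref{eq:EL.1} from coordinatewise variations (the paper computes the difference quotients via an integral representation and dominated convergence, but your direct differentiation of the finitely many terms involving $r_k$ is an equivalent, if anything slightly cleaner, version of the same step).
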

  
  \begin{proof}
  Since $\Einf(r)<+\infty$ for the constant sequence $r=0$, the infimum is less
  than $+\infty$. We show that existence follows from the direct method of the
  Calculus of Variations applied to $\Einf$. It is easy to check that $\Phinn$
  is concave because $\phinn$ is concave, hence
  \begin{align*}
    \Einf(r)\geq\sum_{i=-\infty}^{\infty}\Phin(r_i)+\Phinn(r_i)+\Edefone(r).
  \end{align*}
  Next,
  \begin{align*}
    \Phin(t)+\Phinn(t)=W(F_0+t)-W(F_0)-W^\prime(F_0)t\geq \smfrac12l\,t^2,
  \end{align*}
  using the $l$-convexity of $W$. For $i\in\{-2,-1,0,1\}$, we can estimate below
  as in \eqref{eq:defbddbelow}, so that for some constant $C\in\R$
  \begin{align*}
    \sum_{i=-2}^{1}\Big(\smfrac12\Phin(r_i)+\smfrac12\Phin(r_{i+1})
    +\Phinn\big(\smfrac{r_i+r_{i+1}}{2}\big)\Big)+\Edefone(r)\geq
    \sum_{i=-2}^{1}\smfrac12l\,|r_i|^2+C.
  \end{align*}
  Hence we have that
  \begin{align*}
    \Einf(r)\geq \smfrac12l\norm{r}_{\ell^2(\Z)}^2+C.
  \end{align*}
  Next we need to show that $\Einf$ is sequentially weakly lower semicontinuous.
  This follows by using Fatou's lemma as above with the pointwise lower bounds
  just proven. A standard application of the direct method now yields existence.
  
  To obtain the Euler--Lagrange equations, suppose $r$ is a minimiser of
  $\Einf$. Let $e^i\in\ell^2(\Z)$ be the sequence which has
  \begin{align*}
    e^i_j=\begin{cases}
    1 &j=i,\\
    0 &\text{otherwise}.
    \end{cases}
  \end{align*}
  Let $i\notin\{-2,-1,0,1\}$; for small enough $t>0$, $\Einf(r+te^i)<+\infty$,
  and
  \begin{align*}
    0&\leq\frac{\Einf(r+te^i)-\Einf(r)}{t},\\
    &=\int_0^1\smfrac12\Phinn^\prime\big(\smfrac{r_{i-1}+r_i+st}{2}\big)
      +\Phin^\prime(r_i+st)
      +\smfrac12\Phinn^\prime\big(\smfrac{r_i+st+r_{i+1}}{2}\big)\ds.
  \end{align*}
  Applying the Dominated convergence theorem and repeating the argument for
  $t<0$ now implies that
  \begin{subequations}
  \begin{align}
    \smfrac12\Phinn^\prime(\smfrac{r_{i-1}+r_i}{2})+\Phin^\prime(r_i)
      +\smfrac12\Phinn^\prime(\smfrac{r_i+r_{i+1}}{2})&=0.\label{eq:EL.1.bulk}
  \end{align}
  By the same argument, we also have that
  \label{eq:EL.1}
  \begin{align}
  \smfrac12\Phinn^\prime\big(\smfrac{r_{-3}+r_{-2}}{2}\big)
    &+\Phin^\prime(r_{-2})
    +\smfrac12\Psinn^\prime\big(\smfrac{r_{-2}+r_{-1}}{2}\big)=0,\\
  \smfrac12\Psinn^\prime\big(\smfrac{r_{-2}+r_{-1}}2\big)&+\Psin^\prime(r_{-1})+
    \smfrac12\Phinn^\prime\big(\smfrac{r_{-1}+r_0}2\big)=0,\\
  \smfrac12\Phinn^\prime\big(\smfrac{r_{-1}+r_0}2\big)&+\Psin^\prime(r_0)+
    \smfrac12\Psinn^\prime\big(\smfrac{r_0+r_1}2\big)=0,\\
  \smfrac12\Psinn^\prime\big(\smfrac{r_0+r_1}2\big)&+\Phin^\prime(r_1)+
    \smfrac12\Phinn^\prime\big(\smfrac{r_1+r_2}2\big)=0,
  \end{align}
  \end{subequations}
  completing the proof.
  \end{proof}
  
  In order to complete the proof of the limsup inequality, we will require a
  better understanding of minimisers of $\Einf$, and so we prove the
  following sequence of results, which amount to regularity results for
  solutions of the Euler--Lagrange equations \eqref{eq:EL.1}.
  From now on, we fix $r\in\ell^2(\Z)$ as being one particular minimiser of
  $\Einf$.
  
  \begin{lemma}
  Suppose that $r\in\ell^2(\Z)$ solves \eqref{eq:EL.1.bulk} for all $i\geq2$.
  Then
  \begin{align*}
    r_1\geq0 \quad&\Rightarrow \quad r_1=\max_{j\in\N}\{r_j\};\\
    r_1\leq0 \quad&\Rightarrow \quad r_1=\min_{j\in\N}\{r_j\}.
  \end{align*}
  \end{lemma}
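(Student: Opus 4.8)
The plan is to read \eqref{eq:EL.1.bulk} as a discrete maximum principle, with the crucial monotonicity supplied not by $\phin$ alone but by the $l$-convexity of $W$. First I would record the two structural facts I need. Since $\phinn$ is concave (Assumption (3)), $\Phinn^{\prime\prime}(t)=\phinn^{\prime\prime}(F_0+t)\leq0$, so $\Phinn^\prime$ is non-increasing. Since $W$ is $l$-convex (Assumption (5)), the sum
\begin{align*}
  \Phin^\prime(t)+\Phinn^\prime(t)=W^\prime(F_0+t)-W^\prime(F_0)
\end{align*}
is \emph{strictly} increasing and vanishes at $t=0$, so it is strictly positive for $t>0$ and strictly negative for $t<0$. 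These are the only properties of the potentials that enter.

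Next I would argue the case $r_1\geq0$ by contradiction. Suppose $M:=\max_{j\in\N}r_j>r_1\geq0$, so that $M>0$. Because $r\in\ell^2(\Z)$ forces $r_j\conv0$, the positive value $M$ is attained at some finite index $k$; and since $M>r_1$ we must have $k\geq2$, so that both $k-1$ and $k+1$ lie in $\N$ and hence $r_{k-1}\leq M$ and $r_{k+1}\leq M$. Writing $c:=\smfrac{r_{k-1}+r_k}{2}\leq M$ and $c^\prime:=\smfrac{r_k+r_{k+1}}{2}\leq M$, the fact that $\Phinn^\prime$ is non-increasing gives $\Phinn^\prime(c)\geq\Phinn^\prime(M)$ and $\Phinn^\prime(c^\prime)\geq\Phinn^\prime(M)$. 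Evaluating \eqref{eq:EL.1.bulk} at $i=k$ (which is legitimate as $k\geq2$) therefore yields
\begin{align*}
  0=\smfrac12\Phinn^\prime(c)+\Phin^\prime(M)+\smfrac12\Phinn^\prime(c^\prime)
    \geq\Phin^\prime(M)+\Phinn^\prime(M)=W^\prime(F_0+M)-W^\prime(F_0)>0,
\end{align*}
the last inequality because $M>0$. This is the desired contradiction, so $r_1=\max_{j\in\N}r_j$.

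The case $r_1\leq0$ is entirely symmetric: assuming $m:=\min_{j\in\N}r_j<r_1\leq0$ gives $m<0$ attained at some $k\geq2$ with $r_{k\pm1}\geq m$, the monotonicity of $\Phinn^\prime$ now produces $\smfrac12\Phinn^\prime(c)+\smfrac12\Phinn^\prime(c^\prime)\leq\Phinn^\prime(m)$ for the corresponding midpoints $c,c^\prime\geq m$, and \eqref{eq:EL.1.bulk} at $i=k$ gives $0\leq W^\prime(F_0+m)-W^\prime(F_0)<0$. I expect the main obstacle to be conceptual rather than computational: one must recognise that the concavity of $\phinn$ makes $\Phinn^\prime$ push the wrong way, and that monotonicity is restored only by the competing nearest-neighbour term through the combination $t\mapsto W^\prime(F_0+t)-W^\prime(F_0)$, whose strict monotonicity is exactly the $l$-convexity of $W$. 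The only other point requiring care is the appeal to $\ell^2$ decay to guarantee that a strictly signed extremum is attained at a genuine finite interior index $k\geq2$, so that the bulk equation \eqref{eq:EL.1.bulk}, which holds only for $i\geq2$, may actually be invoked there.
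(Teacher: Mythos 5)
Your proof is correct and takes essentially the same route as the paper's: evaluate the bulk Euler--Lagrange equation at an interior extremum, use the monotonicity of $\Phinn^\prime$ (concavity of $\phinn$) to reduce to the combination $\Phin^\prime(t)+\Phinn^\prime(t)=W^\prime(F_0+t)-W^\prime(F_0)$, and conclude from the $l$-convexity of $W$. Your version is in fact slightly more careful than the paper's, since you explicitly justify attainment of the extremum at a finite index $k\geq2$ via the $\ell^2$ decay and cleanly dispose of the degenerate case by assuming the maximum strictly exceeds $r_1$, points the paper's proof compresses into the phrase ``either $r_i=0$ for all $i\geq1$ or a contradiction.''
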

  
  \begin{proof}
  We prove only the first conclusion, the proof of the second being similar.
  Suppose for a contradiction that $r_M$ is an interior maximum, i.e. that
  \begin{align*}
    r_M=\max_{j\in\N}\{r_j\}\geq0.
  \end{align*}
  Then because $\Phinn$ is concave, we have that $\Phinn^\prime$ is monotone
  decreasing, and hence
  \begin{align}
    0&=\smfrac12\Phinn^\prime(\smfrac{r_{M-1}+r_M}{2})+\Phin^\prime(r_M)
      +\smfrac12\Phinn^\prime(\smfrac{r_M+r_{M+1}}{2}),\notag\\
    &\geq\Phinn^\prime(r_M)+\Phin^\prime(r_M),\notag\\
    &=W^\prime(F_0+r_M)-W^\prime(F_0),\notag\\
    &=\int_0^{r_M}W^{\prime\prime}(F_0+s)\ds,\notag\\
    &\geq l\,r_M,\label{eq:max.princ}
  \end{align}
  which implies that either $r_i=0$ for all $i\geq1$ or a contradiction, 
  concluding the proof.
  \end{proof}
  
  \begin{corollary}
  \label{cor:monotonicity}
  If $(r_i)_{i=1}^\infty\in\ell^2(\N)$ solves the Euler--Lagrange
  equations with $r_1$ fixed, then
  \begin{align*}
    r_1\geq0 \quad&\Rightarrow \quad r_i\geq r_{i+1}\geq0
      \text{ for all }i\in\N;\\
    r_1\leq0 \quad&\Rightarrow \quad r_i\leq r_{i+1}\leq0
      \text{ for all }i\in\N.
  \end{align*}
  \end{corollary}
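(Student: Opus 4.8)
The plan is to bootstrap from the preceding lemma, which identifies $r_1$ as the extreme value of the sequence, by applying it to every shifted tail of $r$; the only genuine work is to first establish that the whole sequence has a definite sign. I treat the case $r_1\geq0$, the case $r_1\leq0$ following by reflecting every inequality below (replacing maxima by minima and invoking the corresponding conclusion of the preceding lemma).

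First I would show that $r_i\geq0$ for all $i\in\N$. Suppose instead $r_m<0$ for some $m$. Since $r\in\ell^2(\N)$ we have $r_i\conv0$, so the value $\inf_i r_i\leq r_m<0$ is attained at some finite index $M$, and because $r_1\geq0>r_M$ necessarily $M\geq2$. Thus $r_M$ is an interior minimum, giving $r_{M-1}\geq r_M$ and $r_{M+1}\geq r_M$. Mirroring the computation \eqref{eq:max.princ}, and using that $\Phinn^\prime$ is decreasing (because $\phinn$, hence $\Phinn$, is concave), I obtain
\begin{align*}
  0 &= \smfrac12\Phinn^\prime\big(\smfrac{r_{M-1}+r_M}{2}\big)
      + \Phin^\prime(r_M)
      + \smfrac12\Phinn^\prime\big(\smfrac{r_M+r_{M+1}}{2}\big)\\
    &\leq \Phinn^\prime(r_M) + \Phin^\prime(r_M)
      = W^\prime(F_0+r_M) - W^\prime(F_0)\\
    &= \int_0^{r_M} W^{\prime\prime}(F_0+s)\ds \leq l\,r_M < 0,
\end{align*}
where the two middle-argument terms are bounded using $\tfrac{r_{M-1}+r_M}{2}\geq r_M$ and $\tfrac{r_M+r_{M+1}}{2}\geq r_M$ together with monotonicity of $\Phinn^\prime$, and the last inequality uses $W^{\prime\prime}\geq l$ with $r_M<0$. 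This contradiction shows $r_i\geq0$ for every $i$.

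The monotonicity then follows by reindexing. Fix $K\in\N$ and set $\tilde r_j:=r_{K-1+j}$ for $j\in\N$. Then $\tilde r\in\ell^2(\N)$, it solves \eqref{eq:EL.1.bulk} for all $j\geq2$ (since $r$ solves it for all $i\geq K+1$), and $\tilde r_1=r_K\geq0$ by the previous step. Because the preceding lemma involves only the forward indices, it applies to $\tilde r$ and yields $\tilde r_1=\max_{j\in\N}\{\tilde r_j\}$, that is, $r_K=\max_{i\geq K}\{r_i\}$; in particular $r_K\geq r_{K+1}$. As $K$ was arbitrary and each term is nonnegative, this gives $r_i\geq r_{i+1}\geq0$ for all $i\in\N$, as required.

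The only real obstacle is the sign argument in the second paragraph: the preceding lemma by itself locates $r_1$ as the maximum but says nothing about the sign of the remaining terms, so a separate minimum-principle computation is needed to exclude a negative interior dip before the shifted lemma can legitimately be invoked at each $K$. Once nonnegativity is secured, the reindexing step is purely formal, requiring only the observation that a tail of an $\ell^2(\N)$ sequence solving the bulk equations \eqref{eq:EL.1.bulk} is again such a sequence.
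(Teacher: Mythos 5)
Your proof is correct, but it takes a noticeably different route from the paper's. The paper never re-applies the preceding lemma to shifted sequences: instead it upgrades the computation \eqref{eq:max.princ} into a pair of \emph{local} extremum principles (any local maximum has $r_i\leq0$, any local minimum has $r_i\geq0$) and then runs a propagation induction --- a strict local maximum $r_M<0$ forces $r_i\leq r_M<0$ for all $i\geq M$, contradicting $r\in\ell^2(\N)$, with a separate case analysis for the degenerate situation $r_M=0$ --- concluding that the sequence admits no interior extrema and hence is monotone; nonnegativity then comes out implicitly, since a monotone sequence tending to zero cannot change sign. You instead establish nonnegativity \emph{first}, by applying the minimum principle at the attained global infimum (your chain using the monotonicity of $\Phinn^\prime$, the identity $\Phinn^\prime(t)+\Phin^\prime(t)=W^\prime(F_0+t)-W^\prime(F_0)$, and $W^{\prime\prime}\geq l$ is the correct mirror of \eqref{eq:max.princ}, and your attainment argument via $r_i\conv0$ is sound), and then obtain monotonicity for free from the translation invariance of the bulk equations \eqref{eq:EL.1.bulk}: the tail $(r_{K-1+j})_{j\in\N}$ satisfies the hypotheses of the preceding lemma with first entry $r_K\geq0$, so $r_K=\max_{i\geq K}r_i$ and in particular $r_K\geq r_{K+1}$. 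This reorganisation buys a cleaner argument --- you bypass the propagation induction and the fiddly degenerate case $r_M=0$ entirely --- at the mild cost of having to observe, as you do, that the lemma involves only forward indices and so legitimately applies to each shifted tail. Incidentally, your conclusion matches the stated inequalities exactly, whereas the closing sentence of the paper's own proof has `increasing' and `decreasing' interchanged.
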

  
  \begin{proof}
  Estimate \eqref{eq:max.princ} states that if
  \begin{align*}
    r_i=\max\{r_{i-1},r_i,r_{i+1}\},\quad\text{then}\quad r_i\leq 0.
  \end{align*}
  Similarly, it is possible to show that if
  \begin{align*}
    r_i=\min\{r_{i-1},r_i,r_{i+1}\},\quad\text{then}\quad r_i\geq0.
  \end{align*}
  Suppose that $r$ has a strict local maximum $r_M<0$ with $M>1$. Then
  $r_{M+1}\leq r_M<0$. Since local minima can only occur when $r_i\geq0$,
  $r_{M+1}$ cannot be a local minimum, and so $r_{M+2}\leq r_{M+1}<0$. 
  Proceeding by induction, $r_i\leq r_M<0$ for all $i\geq M$, which contradicts
  the fact that $r\in\ell^2(\N)$.
  A similar argument prevents the existence of strict local minima.
  
  Next suppose that $r_M=0$ is a local maximum for $M>1$. If $r_{M+1}<0$, then
  the previous argument applies. If $r_{M+1}=0$, then it too must be a local
  maximum or minimum, depending on the sign of $r_{M+2}$. If $r_{M+2}\neq0$,
  then we can apply the previous arguments again to arrive at a contradiction,
  so by induction we have that $r_i=0$ for all $i\geq M$.
  
  We have therefore shown that there can be no internal maxima, unless they are
  degenerate in the sense that $r$ is identically $0$ after the maximum, and by
  a similar argument, we can show that there can be no internal minima except
  if they are degenerate in the same sense. We can now conclude that any 
  solution of \eqref{eq:EL.1.bulk} must be increasing if $r_1\geq0$, or
  decreasing if $r_1\leq0$, which concludes the proof.
  \end{proof}
  
  Finally, we prove that minimisers have exponentially small `tails'.
  
  \begin{proposition}[Exponential decay]
  Let $C\geq0$ be the constant such that for all
  $t\in(-r_1,r_1)$,
  \begin{align*}
  0\geq\Phinn^{\prime\prime}(t)&\geq-C.
  \end{align*}
  Then if $\lambda:=\smfrac{C}{l+C}$, we have that
  \begin{align*}
    |r_i|\leq \lambda^{i-1}\, |r_1|.
  \end{align*}
  \end{proposition}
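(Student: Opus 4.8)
The plan is to prove the one-step geometric bound $r_{i+1}\le\lambda\,r_i$ for every $i\ge1$ and then iterate it down to $r_1$. By Corollary \ref{cor:monotonicity} I may assume $r_1\ge0$, so that $(r_i)_{i\ge1}$ is non-negative and non-increasing; the case $r_1\le0$ is handled identically by working with $-r_i\ge0$ and the mirror-image one-sided bounds. Throughout it is convenient to set $h(t):=-\Phinn^\prime(t)$, which satisfies $h(0)=0$, is non-decreasing (as $\Phinn$ is concave), and by the very definition of $C$ obeys $0\le h^\prime(t)\le C$ on $(-r_1,r_1)$; hence $h$ is $C$-Lipschitz there and $0\le h(t)\le Ct$ for $t\in[0,r_1]$, extending to the endpoint by continuity of $\Phinn^{\prime\prime}$.

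The decisive ingredient I would isolate first is a lower bound on $\Phin^\prime$ that simultaneously encodes the convexity of $W$ and the concavity of $\phinn$. Since $\phin^{\prime\prime}(F_0+s)=W^{\prime\prime}(F_0+s)-\phinn^{\prime\prime}(F_0+s)\ge l+h^\prime(s)$ for $s\in[0,r_1]$, integration gives
\begin{align*}
  \Phin^\prime(t)\ge l\,t+h(t),\qquad t\in[0,r_1].
\end{align*}
Next, for $i\ge2$ the bulk equation \eqref{eq:EL.1.bulk} may be written as
\begin{align*}
  \Phin^\prime(r_i)=\smfrac12\,h\big(\smfrac{r_{i-1}+r_i}{2}\big)
    +\smfrac12\,h\big(\smfrac{r_i+r_{i+1}}{2}\big),
\end{align*}
and monotonicity yields $\smfrac{r_i+r_{i+1}}{2}\le r_i\le\smfrac{r_{i-1}+r_i}{2}$, so that $h\big(\smfrac{r_i+r_{i+1}}{2}\big)\le h(r_i)$. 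Feeding in the combined lower bound and cancelling $\smfrac12 h(r_i)$ then gives
\begin{align*}
  l\,r_i\le\smfrac12\Big(h\big(\smfrac{r_{i-1}+r_i}{2}\big)-h(r_i)\Big)
    \le\smfrac{C}{2}\Big(\smfrac{r_{i-1}+r_i}{2}-r_i\Big)=\smfrac{C}{4}\,(r_{i-1}-r_i),
\end{align*}
using the $C$-Lipschitz bound in the last estimate. Rearranging gives $r_i\le\smfrac{C}{4l+C}r_{i-1}\le\lambda\,r_{i-1}$, since $4l+C\ge l+C$ and $r_{i-1}\ge0$.

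Because \eqref{eq:EL.1.bulk} is available for all $i\ge2$, iterating this one-step estimate yields $r_i\le\lambda\,r_{i-1}\le\cdots\le\lambda^{i-1}r_1$ for every $i\ge1$, which is the desired bound (in fact with room to spare, since $\smfrac{C}{4l+C}\le\lambda$). I expect the main obstacle to be precisely the three-term structure of \eqref{eq:EL.1.bulk}: using only $\Phin^\prime(t)\ge lt$ together with $h(t)\le Ct$ controls $l\,r_i$ merely by $r_{i-1}+r_i$, which produces no decay. The point that makes the argument work is the combined bound $\Phin^\prime(t)\ge lt+h(t)$, which lets the incoming second-neighbour contribution $h\big(\smfrac{r_i+r_{i+1}}{2}\big)$ be absorbed into $h(r_i)$, leaving only the genuine increment $h\big(\smfrac{r_{i-1}+r_i}{2}\big)-h(r_i)$, of size $O(r_{i-1}-r_i)$, to drive the geometric decay.
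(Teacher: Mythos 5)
Your proof is correct and is essentially the paper's own argument: both extract a one-step contraction for $i\geq2$ from the bulk Euler--Lagrange equation \eqref{eq:EL.1.bulk} by combining the monotonicity of Corollary \ref{cor:monotonicity} with $W^{\prime\prime}\geq l$ and $0\geq\Phinn^{\prime\prime}\geq-C$, discarding the favourable $r_{i+1}$-contribution via concavity, and then iterate, with your symmetry reduction to $r_1\geq0$ matching the paper's. If anything, your bookkeeping through $h=-\Phinn^\prime$ is more careful than the paper's Fundamental-Theorem-of-Calculus rewriting, which silently drops the midpoint factors of $\smfrac12$ (its displayed identity should read $\smfrac12\int_{r_i}^{(r_{i-1}+r_i)/2}\Phinn^{\prime\prime}(t)\dt$, etc.) and so obtains the ratio $\smfrac{C}{l+C}$ where your exact computation gives $\smfrac{C}{4l+C}$; since the latter is smaller, both yield the stated bound, exactly as your closing remark anticipates.
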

  
  \begin{proof}
  We will only prove the result for $r_1\geq0$, since the other case is similar.
  The Euler--Lagrange equations may be rewritten using the Fundamental Theorem
  of Calculus as
  \begin{align*}
    \int_0^{r_i}W^{\prime\prime}\big(F_0+t\big)\dt+\int_{r_i}^{r_{i-1}}
    \Phinn^{\prime\prime}(t)\dt+\int_{r_i}^{r_{i+1}}
    \Phinn^{\prime\prime}(t)\dt=0.
  \end{align*}
  Corollary \ref{cor:monotonicity} now gives that $r_{i-1}\geq r_i\geq r_{i+1}$,
  so we have
  \begin{align*}
    0&= \int_0^{r_i}W^{\prime\prime}\big(F_0+t\big)\dt+\int_{r_i}^{r_{i-1}}
    \Phinn^{\prime\prime}(t)\dt-\int_{r_{i+1}}^{r_i}\Phinn^{\prime\prime}(t)\dt,
    \\
    &\geq\int_0^{r_i}l\,\dt-\int_{r_i}^{r_{i-1}}C\,\dt,
  \end{align*}
  using the assumed bound on the second derivative of $\Phinn$, and the
  $l$-convexity of $W$. It immediately follows that
  \begin{align*}
    \frac{C}{l+C}\cdot r_{i-1}\geq r_i,
  \end{align*}
  which is true for any $i\geq2$ and the decay estimate is obtained by using
  induction on this inequality.
  \end{proof}
  
  \begin{remark}
  It should immediately be noted that since we know that $r_i$ converges to zero
  exponentially as $i\conv\pm\infty$, it must be the case that $r\in\ell^1(\Z)$.
  This will be crucial in what follows.
  \end{remark}
  
  \medskip
  We can now apply this characterisation of minimisers of $\Einf$ to complete the
  proof of Proposition \ref{prop:1limsup}. Let the sequence of functions $y^{\eps,\delta}$
  be given by
  \begin{align*}
    y^{\eps,\delta}(x):=\int_{-1/2}^xDy^{\eps,\delta}(t)\dt,
  \end{align*}
  where we have set
  \begin{align*}
    Dy^{\eps,\delta}(x)&:=\begin{cases}
      \Da1\bar{y}_i+r_i-\bar{r}^\delta &i\in\{-K,\ldots,K-1\},\\
      \Da1\bar{y}_i &\text{otherwise},
    \end{cases}\\
    \bar{r}^\delta&:=\delta\sum_{i=-K}^{K-1}r_i,\\
    \delta&:=\frac{1}{2K}.
  \end{align*}
  We will prove estimates for $y^{\eps,\delta}$, and then set $\delta$ as a
  function of $\eps$ in order to obtain the recovery sequence. Since
  $r\in\ell^1(\Z)$, we have that
  \begin{align}
    |\bar{r}^\delta|&\leq\delta\sum_{i=-K}^{K-1}|r_i|
    \leq\delta\norm{r}_{\ell^1(\Z)}. \label{eq:limsup1est1}
  \end{align}
  By construction, $y^{\eps,\delta}\in\AN$ as long as $\delta\geq\eps$, i.e.
  $K\leq N$.
  
  \subsubsection*{Ensuring $y^{\eps,\delta}$ is well-defined}
  The first step we take is to check that $W(\Da1y_i^{\eps,\delta})$ is
  well-defined. Since $\Einf(r)<+\infty$, we have that for all $i\in\Z$,
  \begin{align*}
    W(F_0+r_i)<+\infty\quad\Rightarrow\quad
      F_0+r_i\geq F_0-\norm{r}_{\ell^\infty(\Z)}>0,
  \end{align*}
  as $\ell^2(\Z)\subset\ell^\infty(\Z)$.
  Next, by using the Mean Value Theorem and \eqref{eq:limsup1est1}, 
  we estimate for $i\in\{-K,\ldots,K-1\}$ that
  \begin{align*}
    |\Da1y^{\eps,\delta}_i-F_0-r_i|&=\bigg|\,\avint_{x_i}^{x_{i+1}}D\bar{y}\dx
      -\bar{r}^\delta-D\bar{y}(0)\bigg|,\\
    &=|D\bar{y}(\xi)-\bar{r}^{\delta}-D\bar{y}(0)|,\\
    &\leq\bigg|\int_0^\xi D^2\bar{y}(x)\dx\bigg|
      +\delta\norm{r}_{\ell^1(\Z)},\\
    &\leq|x_{i+1}|\norm{D^2\bar{y}}_\infty
      +\delta\norm{r}_{\ell^1(\Z)},\\
    &\leq \frac{\eps}{2\delta}\norm{D^2\bar{y}}_\infty
      +\delta\norm{r}_{\ell^1(\Z)}.
  \end{align*}
  On the second line, $\xi$ is some point in $(x_i,x_{i+1})$, and on the final
  line we have used $|x_{i+1}|\leq K\eps =\smfrac{\eps}{2\delta}$.
  For fixed $i$, it is now clear that
  \begin{align*}
    \big|W(\Da1y_i^\eps)-W(F_0+r_i)\big|\leq C \sup\big\{|W^\prime(t)|:t\in(F_0
      +r_i,\Da1y_i^\eps)\big\}\big(\smfrac{\eps}{\delta}+\delta\big),
  \end{align*}
  therefore $W(\Da1y^\eps_i)$ is finite when $\eps\ll\delta\ll1$.
  We can additionally estimate
  \begin{align}
    |\Da1y^{\eps,\delta}_i-D\bar{y}(x)|&=\bigg|\,\avint_{x_i}^{x_{i+1}}\Big(
      D\bar{y}\dx-\bar{r}^\delta-D\bar{y}(x)\Big)\dx\bigg|,\notag\\
    &\leq \eps\norm{D^2\bar{y}}_\infty+\delta\norm{r}_{\ell^2(\Z)}.
    \label{eq:limsup1est2}
  \end{align}
  
  \subsubsection*{Pointwise upper bounds on $s_i^{\eps,\delta}$}
  Next, we define $s_i^{\eps,\delta}$ in a similar fashion to \eqref{eq:sidef},
  but adding and subtracting an extra $\phin(\Da2y^{\eps,\delta}_i)$ term, we
  have
  \begin{align*}
    s_i^{\eps,\delta}&= \bigg(W(\Da2y^{\eps,\delta}_i)
      -\avint_{x_i}^{x_{i+2}}W(D\bar{y})\dx\bigg)\\
    &\qquad\quad+\Big(\smfrac12\phin(\Da1y^{\eps,\delta}_i)
      +\smfrac12\phin(\Da1y^{\eps,\delta}_{i+1})
      -\phin(\Da2y_i^{\eps,\delta})\Big)\\
    &\qquad\qquad\qquad+\avint_{x_i}^{x_{i+2}}\Big(\sigma D\bar{u}-\sigma^\eps
      Du^{\eps,\delta}-\Sigma\,\big(Dy^{\eps,\delta}-D\bar{y}\big)\Big)\dx,\\
    &=:T_1+T_2+T_3.
  \end{align*}
  $T_1$ can be treated using Jensen's inequality and the convexity of $W$:
  \begin{align}
    T_1&=W(\Da2y_i^{\eps,\delta})-\avint_{x_i}^{x_{i+2}}W(D\bar{y})\dx\notag\\
      &\leq\smfrac12W(\Da1y_i^{\eps,\delta})+\smfrac12W(\Da1y^{\eps,\delta}
        _{i+1})-\avint_{x_i}^{x_{i+1}}W(D\bar{y})\dx,\notag\\
      &\leq\avint_{x_i}^{x_{i+2}}W(Dy^{\eps,\delta})-W(D\bar{y})\dx,\notag\\
    &\leq \avint_{x_i}^{x_{i+1}}W^\prime(Dy^{\eps,\delta})\big(Dy^{\eps,\delta}
      -D\bar{y}\big)\dx.\label{eq:limsup1est3}
  \end{align}
  The final integral term, $T_3$, is bounded by
  \begin{align}
    T_3&=\avint_{x_i}^{x_{i+2}}\sigma D\bar{u}-\sigma^\eps Du^{\eps,\delta}
      -\Sigma\,\big(Du^{\eps,\delta}-D\bar{u}\big)\dx\notag\\
      &=\avint_{x_i}^{x_{i+2}}(\sigma+\Sigma)\big(D\bar{u}-Du^{\eps,\delta}\big)
      +(\sigma-\sigma^\eps)Du^{\eps,\delta}\dx,\notag\\
    &\leq \avint_{x_i}^{x_{i+2}}W^\prime(D\bar{y})\big(D\bar{y}-Dy^{\eps,\delta}
      \big)\dx+C\eps^{3/2}\norm{Du^{\eps,\delta}}_2,\label{eq:limsup1est4}
  \end{align}
  using the Euler--Lagrange equations and the estimate proved in
  \eqref{eq:sigmaest3}.
  The remaining part of the expression, $T_2$, can then be estimated as follows:
  \begin{align*}
    T_2&=\smfrac12\phin(\Da1y^{\eps,\delta}_i)
      +\smfrac12\phin(\Da1y^{\eps,\delta}
      _{i+1})-\phin\big(\Da2y_i^{\eps,\delta}\big)\\
      &=\frac14\int_{\Da1y_i^{\eps,\delta}}^{\Da1y_{i+1}^{\eps,\delta}}
      \int_{\Da1y_i^{\eps,\delta}}^{\Da1y_{i+1}^{\eps,\delta}}
      \phin^{\prime\prime}\big(\smfrac{s+t}{2}\big)\ds\dt,\\
    &\leq C\,\big|\Da1y_{i+1}^{\eps,\delta}-\Da1y_i^{\eps,\delta}\big|^2,
  \end{align*}
  where we have used the fact that $\phin^{\prime\prime}$ is bounded 
  on the domain of integration for sufficiently small $\eps$ and $\delta$, and
  $\phin\in\CC^2$. If $i\in\{-K,\ldots,K-1\}$, then applying the Mean Value
  Theorem with $\xi\in(x_i,x_{i+1})$ and $\zeta\in(x_{i+1},x_{i+2})$ gives
  \begin{align}
    \smfrac12\phin(\Da1y^{\eps,\delta}_i)+\smfrac12\phin(\Da1y^{\eps,\delta}
      _{i+1})-\phin\big(\Da2y_i^{\eps,\delta}\big)&\leq C|D\bar{y}(\xi)
      -D\bar{y}(\zeta)+r_{i+1}-r_i|^2,\notag\\
    &\leq C\Big(\norm{D^2\bar{y}}_\infty\eps+|r_i|+|r_{i+1}|\Big)^2,\notag\\
    &\leq C\Big(\norm{D^2\bar{y}}^2_\infty\eps^2+|r_i|^2+|r_{i+1}|^2\Big).
      \label{eq:limsup1est5}
  \end{align}
  In the case where $i\notin\{-K,\ldots,K-1\}$, we obtain
  \begin{align*}
    \smfrac12\phin(\Da1y^{\eps,\delta}_i)
      +\smfrac12\phin(\Da1y^{\eps,\delta}_{i+1})
      -\phin\big(\Da2y_i^{\eps,\delta}\big)
      &\leq C\eps^2\norm{D^2\bar{y}}_\infty.
  \end{align*}
  Adding together \eqref{eq:limsup1est3}, \eqref{eq:limsup1est4} and
  \eqref{eq:limsup1est5}, for $i\in\{-K-1,\ldots,K-1\}$ we have
  \begin{align*}
    s_i^{\eps,\delta}&\leq\avint_{x_i}^{x_{i+2}}\Big(W^\prime(Dy^{\eps,\delta})
      -W^\prime(D\bar{y})\Big)\big(Dy^{\eps,\delta}-D\bar{y}\big)\dx+C\eps^{3/2}
      \norm{Du^{\eps,\delta}}_2\\
    &\qquad\qquad+C\eps^2+C\big(|r_i|^2+|r_{i+1}|^2\big),\\
    &\leq C\avint_{x_i}^{x_{i+2}}|Dy^{\eps,\delta}-D\bar{y}|^2\dx
      +C\Big(\eps^{3/2}+\eps^2+|r_i|^2+|r_{i+1}|^2\Big),\\
    &\leq C\,\Big(\big(\eps+\delta\big)^2+\eps^{3/2}
      +\eps^2+|r_i|^2+|r_{i+1}|^2\Big),\\
    &\leq C\,\Big(\delta^2+\eps^{3/2}+\eps^2+|r_i|^2+|r_{i+1}|^2\Big),
  \end{align*}
  using $W^\prime\in\CC^1$, estimate \eqref{eq:limsup1est2} and Jensen's
  inequality. For the other indices, we obtain
  \begin{align*}
    s_i^{\eps,\delta}&\leq\avint_{x_i}^{x_{i+2}}\Big(W^\prime(Dy^{\eps,\delta})
      -W^\prime(D\bar{y})\Big)\big(Dy^{\eps,\delta}-D\bar{y}\big)\dx+C\eps^{3/2}
      \norm{Du^{\eps,\delta}}_2,\\
    &\leq C\,\big(\eps^2+\eps^{3/2}\big).
  \end{align*}
  
  \subsubsection*{Conclusion of the argument}
  The two pointwise estimates just obtained imply
  \begin{align*}
    \sum_{i=-\infty}^{\infty}s_i^{\eps,\delta}&\leq
      C\sum_{i=-N}^{N-1}\eps^{3/2}+C\sum_{i=-K}^{K}\big(|r_i|^2+\delta^2\big),\\
    &\leq C\Big(\eps^{1/2}+\norm{r}_{\ell^2(\Z)}^2+\delta\Big).
  \end{align*}
  By choosing $K=\lfloor \sqrt{N}\rfloor$, we have that
  $\delta\leq C\eps^{1/2}$, and so an application of Fatou's Lemma with the
  pointwise upper bound we have just proven implies that
  \begin{align*}
    \limsup_{\eps\conv0}\sum_{i=-\infty}^{\infty}s_i^{\eps,\delta}+\Edefone(Dy^\eps)&\leq
      \sum_{i=-\infty}^{\infty}\limsup_{\eps\conv0}s_i^{\eps,\delta}+\lim_{\eps\conv0}\Edefone(Dy^\eps),\\
      &=\Einf(r).
  \end{align*}
  This now proves Proposition \ref{prop:1limsup}, and concludes the proof of Theorem
  \ref{thm:1Gconv}.
  
  \begin{remark}
  This result shows that the perturbation to the minimiser from the continuum
  model is confined to an exponentially thin boundary layer. Note that the
  linearisation of the functional $\Einf$ in Section \ref{subsec:formal} yielded
  a similar solution structure; this exponential decay suggests that any
  interaction between defects of the type described here is likely to `decouple'
  if one were to study a situation in which there were multiple
  defects of a fixed and finite number which are well-separated in the limit
  $\eps\conv0$.
  \end{remark}
  
  \section*{Conclusion}
  We have presented an analysis of a model for a point defect in a 1D chain of
  atoms interacting under assumptions which attempt to replicate a
  Lennard-Jones type interactions in an elastic regime. We have derived the
  $0^\mathrm{th}$-order $\G$-limit, which is identical to the limit when there
  is no defect.
  
  We then proved that the $1^\mathrm{st}$-order $\G$-limit exists and have
  given an explicit characterisation of this limit in terms of an infinite cell
  problem, and shown that the perturbation introduced by the defect is
  confined to an exponentially thin boundary layer.
  
  \section*{Acknowledgements}
  This work was supported by a studentship which was granted as part of the
  EPSRC Science and Innovation award to the Oxford Centre for Nonlinear PDE
  (EP/E035027/1).
  
  The author would like to thank Christoph Ortner for proposing the Confined
  Lennard-Jones model described above and for a great many discussions 
  throughout this project.
  
%
%
%
%
%

  \bibliography{qc}

\begin{thebibliography}{BDMG99}

\bibitem[AB93]{AnzBaldo:93}
Gabriele Anzellotti and Sisto Baldo.
\newblock Asymptotic development by {$\Gamma$}-convergence.
\newblock {\em Appl. Math. Optim.}, 27(2):105--123, 1993.

\bibitem[AC04]{AlicCic:2004}
Roberto Alicandro and Marco Cicalese.
\newblock A general integral representation result for continuum limits of
  discrete energies with superlinear growth.
\newblock {\em SIAM J. Math. Anal.}, 36(1):1--37, 2004.

\bibitem[BC07]{BraiCic:2007}
Andrea Braides and Marco Cicalese.
\newblock Surface energies in nonconvex discrete systems.
\newblock {\em Math. Models Methods Appl. Sci.}, 17(7):985--1037, 2007.

\bibitem[BDMG99]{BDMG99}
Andrea Braides, Gianni Dal~Maso, and Adriana Garroni.
\newblock Variational formulation of softening phenomena in fracture mechanics:
  the one-dimensional case.
\newblock {\em Arch. Ration. Mech. Anal.}, 146(1):23--58, 1999.

\bibitem[BG02]{BraiGel:2002}
Andrea Braides and Maria~Stella Gelli.
\newblock Continuum limits of discrete systems without convexity hypotheses.
\newblock {\em Math. Mech. Solids}, 7(1):41--66, 2002.

\bibitem[BLBL07]{BLBL07}
Xavier Blanc, Claude Le~Bris, and Pierre-Louis Lions.
\newblock Atomistic to continuum limits for computational materials science.
\newblock {\em M2AN Math. Model. Numer. Anal.}, 41(2):391--426, 2007.

\bibitem[Bra02]{BraidesGconv}
Andrea Braides.
\newblock {\em {$\Gamma$}-convergence for beginners}, volume~22 of {\em Oxford
  Lecture Series in Mathematics and its Applications}.
\newblock Oxford University Press, Oxford, 2002.

\bibitem[BT08]{BraiTru:2008}
Andrea Braides and Lev Truskinovsky.
\newblock Asymptotic expansions by {$\Gamma$}-convergence.
\newblock {\em Contin. Mech. Thermodyn.}, 20(1):21--62, 2008.

\bibitem[Dac08]{Dacorogna08}
Bernard Dacorogna.
\newblock {\em Direct methods in the calculus of variations}, volume~78 of {\em
  Applied Mathematical Sciences}.
\newblock Springer, New York, second edition, 2008.

\bibitem[DM93]{DalMasoGconv}
Gianni Dal~Maso.
\newblock {\em An introduction to {$\Gamma$}-convergence}.
\newblock Progress in Nonlinear Differential Equations and their Applications,
  8. Birkh\"auser Boston Inc., Boston, MA, 1993.

\bibitem[EG92]{EvansGariepy}
Lawrence~C. Evans and Ronald~F. Gariepy.
\newblock {\em Measure theory and fine properties of functions}.
\newblock Studies in Advanced Mathematics. CRC Press, Boca Raton, FL, 1992.

\bibitem[EM07]{E:2007a}
Weinan E and Pingbing Ming.
\newblock Cauchy-{B}orn rule and the stability of crystalline solids: static
  problems.
\newblock {\em Arch. Ration. Mech. Anal.}, 183(2):241--297, 2007.

\bibitem[OT12]{OrtnerTheil:12}
Christoph Ortner and Florian Theil.
\newblock Nonlinear elasticity from atomistic mechanics, 2012.
\newblock arXiv.org:1202.3858v3.

\bibitem[Sch06]{Schmidt:2006}
B.~Schmidt.
\newblock A derivation of continuum nonlinear plate theory from atomistic
  models.
\newblock {\em Multiscale Model. Simul.}, 5(2):664--694, 2006.

\bibitem[SM03]{SuliMayers}
Endre S{\"u}li and David~F. Mayers.
\newblock {\em An introduction to numerical analysis}.
\newblock Cambridge University Press, Cambridge, 2003.

\bibitem[SSZ11]{SSZ:2011}
Lucia Scardia, Anja Schl{\"o}merkemper, and Chiara Zanini.
\newblock Boundary layer energies for nonconvex discrete systems.
\newblock {\em Math. Models Methods Appl. Sci.}, 21(4):777--817, 2011.

\bibitem[Zan96]{Zanzotto96}
Giovanni Zanzotto.
\newblock {The Cauchy{--}Born Hypothesis, Nonlinear Elasticity and Mechanical
  Twinning in Crystals}.
\newblock {\em Acta Crystallographica Section A}, 52(6):839--849, Nov 1996.

\end{thebibliography}
  \bibliographystyle{alpha}
  
\end{document}